\newcommand{\Hplus}{+_{_H}}
\newcommand{\Vplus}{+_{_V}}
\newcommand{\subH}{_{_H}}
\newcommand{\subV}{_{_V}}
\newcommand{\E}{\mathbb{E}}
\DeclareMathOperator*{\hsum}{\scalerel*{\enspace \Sigma_{_H}}{\sum}}
\DeclareMathOperator*{\vsum}{\scalerel*{\enspace \Sigma_{_V}}{\sum}}
\newtheorem*{rep@theorem}{\rep@title}
\newcommand{\newreptheorem}[2]{%
\newenvironment{rep#1}[1]{%
 \def\rep@title{#2~\ref{##1}}%
 \begin{rep@theorem}}%
 {\end{rep@theorem}}}
\newtheorem{thm}{Theorem}[section]
\newtheorem{lem}[thm]{Lemma}
\newtheorem{prop}[thm]{Proposition}
\newtheorem{cor}[thm]{Corollary}
\newtheorem{conj}[thm]{Conjecture}
\theoremstyle{definition}
\newtheorem{defn}{Definition}
\newtheorem*{rem}{Remark}
\newtheorem*{ques}{Question}
\title{The Saxl Conjecture for Fourth Powers via the Semigroup Property}
\author{Sammy Luo and Mark Sellke}
\date{}							
\begin{document}
\maketitle

\begin{abstract} The tensor square conjecture states that for $n \geq 10$, there is an irreducible representation $V$ of the symmetric group $S_n$ such that $V \otimes V$ contains every irreducible representation of $S_n$.  Our main result is that for large enough $n$, there exists an irreducible representation $V$ such that $V^{\otimes 4}$ contains every irreducible representation. We also show that tensor squares of certain irreducible representations contain $(1-o(1))$-fraction of irreducible representations with respect to two natural probability distributions. Our main tool is the semigroup property, which allows us to break partitions down into smaller ones. \end{abstract}

\tableofcontents

\newpage

\section{Introduction and Main Results}

Much of the representation theory of the symmetric group $S_n$ is well-understood. Its irreducible representations have known explicit descriptions. One poorly understood facet, however, is the decomposition of tensor products of its representations into irreducibles. This paper focuses on a conjecture related to these decompositions, which was introduced in \cite{pak}.

\begin{conj}[Tensor Square Conjecture]
\label{conj:tsc}
For every $n$ except $2, 4, 9$ there exists an irreducible representation $V$ of the symmetric group $S_n$ such that the tensor square $V \otimes V$ contains every irreducible representation of $S_n$ as a summand with positive multiplicity.
\end{conj}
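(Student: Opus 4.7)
The tensor square conjecture is a well-known open problem, and the paper's main result is the weaker fourth-power statement, so any plan below is aspirational; it aims to push the paper's semigroup-based framework to the square case. The plan is to attempt Saxl's stronger conjecture directly: for $n = \binom{k+1}{2}$ take the witness to be the staircase irreducible $V_{\rho_k}$ with $\rho_k = (k, k-1, \ldots, 1)$, and for non-triangular $n$ append a short list of cells to $\rho_k$ to form a partition $\lambda_n$ of $n$. The hope is that the staircase's self-conjugate shape and rich internal structure give its tensor square the maximum conceivable support.

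The engine is the semigroup property for Kronecker coefficients: $g(\lambda_n, \lambda_n, \nu) > 0$ whenever $\lambda_n$ and $\nu$ admit compatible decompositions $\lambda_n = \sum_i \mu_i$ and $\nu = \sum_i \nu_i$ with $|\mu_i| = |\nu_i|$ and $g(\mu_i, \mu_i, \nu_i) > 0$ for each $i$. Crucially, the decomposition of $\lambda_n$ may depend on $\nu$. The conjecture then reduces to a combinatorial covering lemma: build a library of ``simple shapes'' $\mu$ (small staircases, rectangles, hooks, ribbons) whose Kronecker squares are understood to contain a rich target set, and show that for every $\nu \vdash n$ one can simultaneously cut $\lambda_n$ into such simple shapes and $\nu$ into pieces that are individually covered. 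The first order of business is thus a catalogue: classify which $\nu'$ satisfy $g(\mu, \mu, \nu') > 0$ for each simple $\mu$, sharp enough that taking component-wise Minkowski sums over admissible cuts of $\lambda_n$ exhausts all of $\nu \vdash n$.

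The main obstacle is exactly the gap that forces the paper to settle for $V^{\otimes 4}$. In the fourth-power version, $V^{\otimes 4} \cong V^{\otimes 2} \otimes V^{\otimes 2}$, so it suffices for $\nu$ to be a Kronecker product of two partitions already appearing in $V^{\otimes 2}$; this provides an extra layer of slack which the paper exploits. For the square, that slack disappears: every target $\nu$ must be hit directly by a semigroup decomposition. Making this work seems to require new positivity results for $g(\rho_k, \rho_k, \nu)$ when $\nu$ lies near the boundary of the dominance order --- partitions with very long rows or columns, and near-hook or near-rectangular shapes --- precisely where current tools (rectangular Kronecker positivity of Ikenmeyer--Mulmuley--Walter, character bounds of Larsen--Shalev, the partial staircase results of Pak--Panova--Vallejo) are just short of what is needed. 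The plan therefore isolates the open difficulty: given a sufficiently strong positivity theorem for staircase-indexed Kronecker coefficients on extremal targets, the semigroup machinery of this paper would close the remaining gap and upgrade the fourth-power theorem to the full conjecture.
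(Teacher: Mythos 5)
The statement you are addressing is Conjecture~\ref{conj:tsc} itself: it is an open problem, and the paper never claims to prove it (its main theorem is the strictly weaker fourth-power statement). So there is no paper proof to compare against, and your text, by its own admission, is not a proof either: it is a conditional research plan whose key step is ``given a sufficiently strong positivity theorem for staircase-indexed Kronecker coefficients on extremal targets.'' That hypothesis is precisely the open content of the conjecture, so nothing is established unconditionally. Your diagnosis of why the paper stops at the fourth power is essentially accurate: the paper proves that every $\nu\vdash n$ is within $O(\sqrt{n})$ block moves (i.e.\ tensor factors of the standard representation $\tau_n$) of a constituent of $\varrho_m^{\otimes 2}$, and that everything in $\tau_n^{\otimes O(\sqrt n)}$ lies in $\varrho_m^{\otimes 2}$; it is exactly this slack, absent for the square, that your plan would have to eliminate.

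Two concrete defects in the plan as written. First, for non-triangular $n$ you propose to ``append a short list of cells'' to $\varrho_k$; but as the paper notes, $\lambda^{\otimes 2}$ contains the sign representation $1^n$ if and only if $\lambda=\lambda'$, so any non-self-conjugate modification (such as the paper's irregular staircase $\xi_n=\varrho_m\Hplus 1_k$) cannot witness the full tensor square conjecture. Your witness must be chosen self-conjugate (e.g.\ adding a symmetric hook), and this constraint is not addressed. Second, your statement of the semigroup property is too loose: it applies to decompositions by horizontal sums $\Hplus$ (adding row lengths) and suitably paired vertical sums $\Vplus$, not to arbitrary ways of splitting the cells of $\lambda_n$ and $\nu$ into equal-sized pieces; writing $\lambda_n=\sum_i\mu_i$ without specifying this hides the main rigidity. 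Indeed the paper's remark on rectangles makes the point sharply: if $\nu$ is a rectangle, every $\Hplus$-decomposition of $\nu$ consists of rectangles, so the ``covering library'' of simple shapes is forced into a very narrow family, and even there the paper only manages to place rectangles in the tensor \emph{cube} $\varrho_m^{\otimes 3}$. Any serious attempt along your lines must confront these extremal targets directly rather than defer them to an unproved positivity theorem.
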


We first remark that for any faithful representation $V$ of a finite group $G$, i.e. a representation such that each $g\in G$ acts differently on $V$, there is some $n$ such that the tensor power $V^{\otimes n}$ contains every irreducible representation of $G$ (\cite{FH} Ex. 2.37). In the case of $G=S_n$, the standard representation of $S_n$ contains a faithful and irreducible representation $V$ of dimension $n-1$, so a sufficiently large tensor power $V^{\otimes k}$ contains every irreducible representation. However, the tensor square conjecture is a much stronger statement than this because it requires such a small exponent.

As is well known (e.g. \cite{FH}), there is an explicit correspondence between the set of irreducible representations of $S_n$ over $\mathbb{C}$ and the set $P_n$ of partitions $\lambda$ of $n$, i.e. sequences $\lambda=(\lambda_1,\lambda_2,\dots)$ of non-negative integers with $\lambda_1\geq\lambda_2\geq\dots$ and $\sum_{i\geq 1}\lambda_i=n$. By associating $\lambda$ to the Young diagram with $\lambda_i$ boxes in row $i$, we may equivalently correspond each irreducible representation of $S_n$ with a Young diagram with $n$ boxes. This correspondence allows the use of combinatorial tools in analyzing many aspects of the representation theory of $S_n$. Because these notions are equivalent for our purposes, we will freely denote by (e.g.) $\lambda$ both the partition or Young diagram corresponding to $\lambda$ and the associated irreducible representation of $S_n$.

In view of this correspondence, we may express the tensor square conjecture in terms of partitions.

\begin{repconj}{conj:tsc}[Tensor Square Conjecture]
For every $n$ except $2, 4, 9$ there exists a partition $\lambda\vdash n$ such that the tensor square $\lambda \otimes \lambda$ contains every irreducible representation of $S_n$ as a summand with positive multiplicity.
\end{repconj}

This conjecture can also be restated in terms of positivity of \textit{Kronecker coefficients} $g_{\lambda\mu}^{\nu}$, the multiplicities of $\nu$ in the tensor product $\lambda \otimes \mu$: it asserts the existence of $\lambda$ such that $g_{\lambda\lambda}^{\nu}>0$ for all $\nu$. Unlike many other coefficients arising in the representation theory of $S_n$, the Kronecker coefficients lack a known combinatorial interpretation. Indeed, finding one has been said to be ``one of the last major open problems in the ordinary representation theory of the symmetric group" (\cite{quote}). The computation of Kronecker coefficients has been shown to be computationally hard (\cite{IkenHard}).

In \cite{pak}, Pak, Panova, and Vallejo studied the tensor square conjecture and suggested two families of partitions which might satisfy the conjecture, the \textit{staircase} and \textit{caret} partitions. The conjecture that the staircase partition suffices is also known as the \textit{Saxl conjecture} (see \cite{pak,IkenDom}).

\begin{defn}
For $m\geq 1$, the staircase partition $\varrho_m\vdash \binom{m+1}{2}$ is  $$\varrho_m=(m,m-1,\dots,1).$$
\end{defn}

\begin{defn}
For $m\geq 1$, the caret partition $\gamma_m\vdash 3m^2$ is  $$\gamma_m=(3m-1,3m-3,3m-5,\dots, m+3,m+1,m,m-1,m-1,m-2,m-2,\dots, 1,1).$$
\end{defn}

\begin{conj}[Saxl Conjecture]
For $n=\binom{m+1}{2}$, all partitions of $n$ are contained in $\varrho_m^{\otimes 2}$.
\end{conj}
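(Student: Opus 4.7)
The plan is to attack the Saxl conjecture by leveraging the semigroup property as the paper's main tool, combined with induction on $m$ and an understanding of known positivity results for Kronecker coefficients of near-staircase shapes. Informally, the semigroup property says that if $g_{\alpha\beta}^{\gamma}>0$ and $g_{\alpha'\beta'}^{\gamma'}>0$, then the Kronecker coefficient indexed by appropriately concatenated partitions (horizontal or vertical sums on the Young diagram side) is also positive. This reduces large-scale positivity questions to small-scale ones, provided we can arrange the concatenations on both the source and target sides simultaneously.

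First I would look for a structural decomposition of the staircase itself. The key observation is that a staircase $\varrho_m$ naturally breaks up into smaller staircases: for example, by peeling off the bottom row and first column one obtains a copy of $\varrho_{m-1}$, and by a more global partitioning one can write $\varrho_m$ as a horizontal or vertical combination of $\varrho_{m_1},\varrho_{m_2},\ldots$ with $\sum \binom{m_i+1}{2}=\binom{m+1}{2}$. Assuming the Saxl conjecture inductively for each $\varrho_{m_i}$, the semigroup property will yield $g_{\varrho_m \varrho_m}^{\nu}>0$ for any $\nu$ that admits a matching decomposition $\nu = \nu_1 \Hplus \nu_2 \Hplus \cdots$ (or the vertical analogue) with $\nu_i \vdash \binom{m_i+1}{2}$.

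Second I would catalogue which target partitions $\nu$ fall outside this framework, i.e.\ those that resist every allowable decomposition, and handle them by supplementary arguments: one can exploit known positivity results for hook shapes, two-row shapes, and near-rectangular shapes (which are established in the Saxl-conjecture literature by Pak--Panova--Vallejo, Ikenmeyer, and others), together with the conjugation symmetry $g_{\lambda\mu}^{\nu}=g_{\lambda\mu}^{\nu'}$ when $\lambda$ is self-conjugate (as $\varrho_m$ is). Combining the inductive step with these ``base'' families should, in principle, cover every $\nu\vdash\binom{m+1}{2}$.

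The main obstacle — and the reason this conjecture is still open — is step two: there is no structural reason that an arbitrary partition $\nu$ admits a decomposition compatible with any fixed decomposition of $\varrho_m$. Partitions whose shape is ``generic'' (neither close to a hook, nor close to a rectangle, nor admitting a clean horizontal/vertical cut of the right sizes) are exactly the hard cases, and this rigidity is precisely why the paper retreats to the fourth-power version: with four tensor factors one has vastly more flexibility in the semigroup decompositions of the target $\nu$, effectively turning a rigid packing problem into a loose covering problem. Overcoming this obstacle for the tensor square would likely require either a substantially more flexible positivity criterion than the semigroup property, or a direct Murnaghan--Nakayama style character estimate tailored to the staircase.
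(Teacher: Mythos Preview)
The statement you were asked to prove is the Saxl Conjecture, which the paper explicitly labels as a \emph{conjecture} and does not prove. There is no proof in the paper to compare against: the Saxl Conjecture remains open, and the paper's actual results are the weaker Theorems~\ref{thm:4thpower}, \ref{thm:unif}, and \ref{thm:planch} (fourth powers, and almost-all results under uniform and Plancherel measure).

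Your proposal is not a proof either, and to your credit you say so yourself: you identify the genuine obstruction, namely that an arbitrary $\nu\vdash\binom{m+1}{2}$ need not admit a horizontal or vertical decomposition compatible with any fixed staircase decomposition of $\varrho_m$. This is exactly the difficulty the paper runs into, and it is precisely why the paper settles for fourth powers. In the paper's language (see the Concluding Remarks), the exponent $4$ is ``really $2\lceil 1+\varepsilon\rceil$'': two tensor factors get you within $O(\sqrt{n})$ blockwise distance of any target (Theorem~\ref{thm:linearH}), and two more absorb that slack (Lemma~\ref{lem:stdInStair}). Your diagnosis that rigidity of the target decomposition is the bottleneck, and that rectangles are a prototypical hard case, matches the paper's own Remark~\ref{rem:recthard} and Appendix~\ref{subsec:rectcube}.

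So: there is no gap to name in a nonexistent paper proof, and your write-up is an accurate summary of why the semigroup strategy alone does not close the conjecture, but it is not a proof and should not be presented as one.
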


\begin{conj}

For $n=3m^2$, all partitions of $n$ are contained in $\gamma_m^{\otimes 2}$.
\end{conj}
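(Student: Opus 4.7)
The plan is to attack this conjecture via the semigroup property for Kronecker coefficients, the paper's main tool: if $g_{\lambda,\mu}^{\nu}>0$ and $g_{\lambda',\mu'}^{\nu'}>0$, then $g_{\lambda+\lambda',\mu+\mu'}^{\nu+\nu'}>0$, where the addition is taken row by row. The strategy is to decompose $\gamma_m$ componentwise as $\alpha^{(1)}+\cdots+\alpha^{(k)}$ into structurally simpler partitions whose self-tensor-squares are already understood, and then, for each target $\nu\vdash 3m^2$, to produce a compatible decomposition $\nu=\nu^{(1)}+\cdots+\nu^{(k)}$ with $|\nu^{(i)}|=|\alpha^{(i)}|$ and $g_{\alpha^{(i)},\alpha^{(i)}}^{\nu^{(i)}}>0$. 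A useful structural feature is that $\gamma_m$ is self-conjugate, which puts strong constraints on its tensor square and should ease the bookkeeping.

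The caret has a natural two-block shape: an ``odd staircase'' top $(3m-1,3m-3,\ldots,m+1,m)$ of size $2m^2+m$, followed by a ``doubled staircase'' tail $(m-1,m-1,m-2,m-2,\ldots,1,1)$ of size $m^2-m$. A first attempt is to write $\gamma_m=\varrho_m+\beta$ by embedding a staircase into the top rows; the residual $\beta$ is then itself close to a smaller caret-plus-correction, opening the door to an inductive argument. For the pieces one would invoke positivity results for $\varrho_m\otimes\varrho_m$ (a form of the Saxl conjecture) and for doubled-staircase shapes, which might be approachable via Blasiak-type rules for two-row or hook Kronecker products together with explicit tableau constructions. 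Classifying the ``generic'' $\nu$ handled by this decomposition should be a matter of coarse counting: any $\nu$ whose box distribution is sufficiently balanced across the regions corresponding to $\varrho_m$ and $\beta$ should split cleanly.

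The main obstacle is twofold. First, invoking tensor-square positivity of the pieces is at least partly circular, so one must refine the decomposition until the pieces are small or simple enough that positivity is genuinely verifiable—for instance by further splitting into hooks, rectangles, and minimal staircases. Second, and more seriously, the exceptional $\nu$—hooks, near-rectangles, two-row shapes, and partitions whose Durfee square is far from $m$—resist every naive decomposition: no matter how $\nu=\nu^{(1)}+\cdots+\nu^{(k)}$ is chosen, some piece tends to fall outside the region where positivity is controlled. These cases will require auxiliary structural input such as characterwise sign-constraint bounds in the spirit of \cite{IkenDom}, explicit positivity for hook-type Kronecker products, or dominance-order results on the support of $\gamma_m\otimes\gamma_m$. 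On balance, I expect the semigroup approach to clean up the bulk of $\nu$ while leaving a residual family whose resolution needs ideas beyond those in the present paper, in analogy with the gap between this paper's fourth-power theorem and the full tensor-square conjecture.
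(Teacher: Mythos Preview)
This statement is an open \emph{conjecture} in the paper, not a theorem: the paper does not prove it, and therefore there is no ``paper's own proof'' against which to compare your proposal. The paper mentions the caret partition $\gamma_m$ only as a second candidate (alongside the staircase $\varrho_m$) for the tensor square conjecture, and in the concluding remarks notes that $\gamma_m$ decomposes as $(\varrho_{2k}\Hplus\varrho_{k-1})\Vplus\varrho_{k-1}$, so that the Plancherel-measure result (Theorem~\ref{thm:planch}) carries over. Nothing further is proved about $\gamma_m^{\otimes 2}$.

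Your proposal is not a proof either, and you say as much in your final paragraph: you expect the semigroup approach to handle ``the bulk of $\nu$'' while leaving a residual family requiring ideas beyond the paper. That self-assessment is accurate. The decomposition $\gamma_m=\varrho_m+\beta$ you suggest is reasonable as a first move, but the plan to invoke positivity of $\varrho_m\otimes\varrho_m$ on arbitrary pieces is, as you note, circular (that is the Saxl conjecture). Even replacing that with the dominance-based Theorem~\ref{thm:dominance}, you run into exactly the obstacle the paper itself faces for the staircase: the semigroup property combined with dominance handles generic partitions but leaves hard cases---rectangles, near-hooks, extremely unbalanced shapes---unresolved. The paper's main deterministic result needs a fourth power of $\varrho_m$ precisely because of this residual; there is no reason to expect the caret to be easier. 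In short, your outline is a plausible research plan along the paper's lines, but it is not a proof, and the conjecture remains open.
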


Previous work made progress towards the tensor square conjecture and towards the Saxl conjecture in particular. Pak, Panova, and Vallejo used a lemma on nonzero character values to show that the tensor square of the staircase contains all hooks, partitions with two rows, and some partitions with three rows or with two rows plus an extra column (\cite{pak}). Similar results were shown for the tensor square of the caret shape. They also showed that the staircase partition $\varrho_k$ contains at least $3^{\lceil k/2\rceil -1}$ distinct partitions in its tensor square. This is noteworthy since the total number of partitions of $n=\frac{k(k+1)}{2}$ is also roughly on the order of $e^{ck}$ for some $c$.

Ikenmeyer \cite{IkenDom} further generalized some of this progress by showing a result based on comparability of partitions to the staircase in dominance order. This result, which we will use heavily in our work, will be described in greater detail in Section~\ref{subsec:semigrp}.

Although preliminary evidence suggests that there could in fact be many shapes $\lambda$ that satisfy the tensor square conjecture for each $n$, several simple criteria are known. For example, $\lambda^{\otimes 2}$ contains the alternating representation $1^n$ if and only if $\lambda$ is identical to its conjugate $\lambda'$ \cite{pak}, which means that only symmetric $\lambda$ can satisfy the full tensor square conjecture.

We obtain several results toward the tensor square conjecture. The primary result is the following.

\begin{thm}
For sufficiently large $n$, there exists $\lambda\vdash n$ such that $\lambda^{\otimes 4}$ contains all partitions of $n$.
\label{thm:4thpower}
\end{thm}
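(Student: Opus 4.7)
The strategy centers on the semigroup property for Kronecker coefficients,
\[
g_{\lambda_1+\lambda_2,\ \mu_1+\mu_2}^{\nu_1+\nu_2} \;\ge\; g_{\lambda_1,\mu_1}^{\nu_1}\,g_{\lambda_2,\mu_2}^{\nu_2},
\]
where addition is componentwise on partitions. Iterated four times, this yields $\nu \in \lambda^{\otimes 4}$ whenever $\lambda$ and $\nu$ admit compatible componentwise decompositions $\lambda = \sum_i \lambda^{(i)}$, $\nu = \sum_i \nu^{(i)}$ with $\nu^{(i)} \in (\lambda^{(i)})^{\otimes 4}$ for each $i$.

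My first step would be to identify small ``universal building blocks''---partitions $\mu$ for which $\mu^{\otimes 4}$ exhausts the irreducibles of $S_{|\mu|}$. Small staircases $\varrho_m$ are natural candidates, since the Saxl conjecture has been verified for small $m$, making $\varrho_m^{\otimes 2}$ (and hence $\varrho_m^{\otimes 4}$) exhaustive. I would then construct $\lambda \vdash n$ as the componentwise sum $\lambda = \varrho_{m_1} + \cdots + \varrho_{m_k}$ of such blocks with $\sum_i \binom{m_i+1}{2} = n$, which is available for every $n$ by Gauss's three-triangular-number theorem. The semigroup property then reduces the theorem to a combinatorial claim: every $\nu \vdash n$ admits a matching componentwise decomposition into partitions of sizes $\binom{m_i+1}{2}$.

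The main obstacle is exactly this decomposition. Generic $\nu$ can be handled by horizontal slicing, but degenerate shapes like $(1^n)$ cannot be written as a componentwise sum of multiple nonempty partitions. To capture the extremal cases, I would invoke Ikenmeyer's theorem~\cite{IkenDom}, which guarantees that $\varrho_m^{\otimes 2}$ contains every partition dominance-comparable to $\varrho_m$. Since the trivial representation $(n)$ always appears in $\lambda^{\otimes 2}$, one has $\lambda^{\otimes 2} \subseteq \lambda^{\otimes 4}$; taking $\lambda$ to be (or contain) a self-conjugate staircase piece then ensures extremal shapes like $(1^n)$ and $(n)$ fall directly into $\lambda^{\otimes 2}$ via Ikenmeyer. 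The hardest part will be bridging these two regimes for intermediate ``thin-but-not-extremal'' shapes such as $(n-k, 1^k)$ for moderate $k$, which are neither cleanly decomposable nor straightforwardly covered by Ikenmeyer's result; this likely requires a refined case analysis or a hybrid argument combining partial decompositions with dominance comparability.
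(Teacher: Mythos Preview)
Your approach has a structural gap that cannot be patched within its framework. If $\lambda = \varrho_{m_1} \Hplus \cdots \Hplus \varrho_{m_k}$ with each $m_i$ bounded by some constant $M$ (so that Saxl is verified for each block), then $\lambda$ has at most $M$ rows. But it is a standard fact---via the embedding $GL_a \times GL_b \hookrightarrow GL_{ab}$ and Schur--Weyl duality---that $g_{\lambda\mu}^{\nu} > 0$ forces $\ell(\nu) \le \ell(\lambda)\,\ell(\mu)$. Iterating, every constituent of $\lambda^{\otimes 4}$ has at most $M^4$ rows, so for $n > M^4$ the column $(1^n)$ simply does not appear in $\lambda^{\otimes 4}$. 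Your proposed rescue via Ikenmeyer's theorem does not apply: that result is about $\varrho_m^{\otimes 2}$ for a single large staircase, whereas your $\lambda$ is a wide, short partition that is neither self-conjugate nor a staircase, and there is no mechanism by which its tensor square could contain $(1^n)$. The difficulty you flag for ``thin-but-not-extremal'' shapes is not a case-analysis wrinkle; it is the visible edge of this obstruction.

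The paper's argument is organized quite differently. It takes $\lambda = \xi_n$ to be one \emph{large} (irregular) staircase and never assembles $\lambda$ from pre-verified small blocks. The proof splits into two halves: first, every $\nu \vdash n$ lies within blockwise distance $O(\sqrt{n})$ of some constituent of $\xi_n^{\otimes 2}$, equivalently $\xi_n^{\otimes 2} \otimes \tau_n^{\otimes O(\sqrt{n})}$ contains all of $P_n$; second, every irreducible in $\tau_n^{\otimes O(\sqrt{n})}$ already lies in $\xi_n^{\otimes 2}$. Chaining these gives $\nu \in \xi_n^{\otimes 2} \otimes \xi_n^{\otimes 2} = \xi_n^{\otimes 4}$. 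The semigroup property is used throughout, but the decompositions are of $\varrho_m$ itself into smaller staircases that recombine via both horizontal \emph{and} vertical sums, paired with a recursive column-splitting of $\nu$ that only needs to be approximately correct; the $O(\sqrt{n})$ slack is precisely what the $\tau_n$ factors absorb and what the second half then eliminates.
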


The partitions $\lambda$ we use are staircases $\varrho_m$ when $n$ is a triangular number and slight adjustments of them when $n$ is not.

We prove Theorem~\ref{thm:4thpower} by combining two results that are of independent interest: We define a simple metric on the set of partitions of $n$ and show that the set of partitions appearing in $\lambda^{\otimes 2}$ is dense in an appropriate sense. We also show that all partitions close to the trivial representation are contained in $\lambda^{\otimes 2}$. Together, these results almost immediately imply that $\lambda^{\otimes 4}$ contains all partitions of $n$ (when $n$ is sufficiently large).

In addition, we prove some probabilistic results towards the Saxl conjecture. 

\begin{thm}
Let $m\geq 0$. Then $\varrho_m^{\otimes 2}$ contains almost all partitions of $\binom{m+1}{2}$ in the uniform measure, in which all distinct partitions are given the same probability.
\label{thm:unif}
\end{thm}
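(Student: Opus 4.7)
My plan is to combine the semigroup property from Section~\ref{subsec:semigrp} with standard facts about the shape of a uniformly random partition. The starting observation is the row-wise decomposition $\varrho_m = \varrho_{m-1} + 1^m$. Since $(1^m)^{\otimes 2}$ contains only the trivial representation $(m)$, the semigroup property immediately implies $\nu + (m) \in \varrho_m^{\otimes 2}$ for every $\nu \in \varrho_{m-1}^{\otimes 2}$. Using the identity $g^{\nu'}_{\lambda',\mu} = g^\nu_{\lambda,\mu}$ and the self-conjugacy of $\varrho_m$, we also conclude that $(\nu + (m))' \in \varrho_m^{\otimes 2}$, producing a further family of partitions in the tensor square obtained by transposition.

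I would then proceed by induction on $m$, assuming $\varrho_{m'}^{\otimes 2}$ contains a $(1-o(1))$-fraction of the partitions of $\binom{m'+1}{2}$ under the uniform measure for all $m' < m$. The induction step combines this hypothesis with iterated semigroup decompositions $\varrho_m = \varrho_{m-k} + \tau_k$, where $\tau_k = \varrho_m - \varrho_{m-k}$ is an explicit ``thickened staircase'' with rows $(k^{m-k+1}, k-1, k-2, \ldots, 1)$. Together with the comparability-based theorem of Ikenmeyer discussed in Section~\ref{subsec:semigrp}---which places partitions sufficiently comparable to $\varrho_m$ in dominance order inside $\varrho_m^{\otimes 2}$---these decompositions collectively produce a combinatorially described class $C \subseteq \varrho_m^{\otimes 2}$.

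The main obstacle is showing $|P_n \setminus C| = o(p(n))$, and I expect this to be the hardest step. For this I would invoke the Vershik--Kerov limit shape theorem: a uniformly random partition of $n$, rescaled by $\sqrt{n}$, concentrates around a specific deterministic curve. Given a typical $\mu$ near this limit shape, the plan is to match its structural features---the sizes of its first row and first column, together with the distribution of its remaining parts---to one of the semigroup decompositions of $\varrho_m$ above, so as to place $\mu$ in $C$. The most delicate point is ensuring that the chosen decompositions jointly cover all typical fluctuations away from the Vershik curve, particularly because neither the Vershik shape nor its reflection is uniformly comparable to the staircase. Standard large-deviation estimates for the uniform measure (controlling, e.g., the largest part and the total number of parts) should then handle the atypical partitions, which form only an exponentially small fraction of $P_n$.
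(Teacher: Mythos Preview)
Your proposal has a genuine structural gap: the induction does not bootstrap. The map $\nu \mapsto \nu \Hplus (m)$ is a bijection from $P_{\binom{m}{2}}$ onto $\{\lambda \vdash \binom{m+1}{2} : \lambda_1 - \lambda_2 \geq m\}$, and by Fristedt's theorem (quoted in the paper as the limiting law $\lim_n P(\frac{\pi}{\sqrt{6n}}X_1 \leq v) = 1 - e^{-v}$ for the number $X_1 = \lambda_1 - \lambda_2$ of unit columns), the event $\lambda_1 - \lambda_2 \geq m \approx \sqrt{2n}$ has limiting probability $e^{-\pi/\sqrt{3}} \approx 0.16$. So even if every partition of $\binom{m}{2}$ were in $\varrho_{m-1}^{\otimes 2}$, this step and its conjugate together would only place a fixed fraction strictly less than $1$ of $P_{\binom{m+1}{2}}$ into $\varrho_m^{\otimes 2}$; iterating cannot push this to $1-o(1)$. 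Your more general decompositions $\varrho_m = \varrho_{m-k} \Hplus \tau_k$ do not repair this, because you never say what lies in $\tau_k^{\otimes 2}$: the shape $\tau_k = (k^{m-k+1},k-1,\dots,1)$ has repeated row lengths and is not self-conjugate, so neither Ikenmeyer's dominance theorem nor its generalization (Theorem~\ref{thm:gendom}) applies, and you have no handle on that tensor square. Finally, you correctly note that the Vershik limit shape is not dominance-comparable to $\varrho_\infty$, but you offer no mechanism to get around this; that is precisely the obstruction, not a detail to be filled in later.

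The paper's proof avoids all of these issues by abandoning induction entirely. It uses the identity $\varrho_m = (\varrho_{\lceil m/2\rceil}\Hplus\varrho_{\lfloor m/2\rfloor})\Vplus(\varrho_{\lfloor m/2\rfloor}\Hplus\varrho_{\lfloor (m-1)/2\rfloor})$ to split $\varrho_m$ into four staircases of roughly equal size, and then splits a typical $\lambda$ into four pieces $\lambda_1,\dots,\lambda_4$ by three vertical cuts through the limit shape at the $x$-coordinates where the enclosed area reaches $\tfrac14,\tfrac12,\tfrac34$. The key geometric fact---established via Lemma~\ref{lem:shapedomin}---is that although the full Vershik curve is incomparable to $\varrho_\infty$, each of the four vertical slices \emph{is} dominance-comparable to a quarter-scale staircase (the first three are dominated, the fourth dominates). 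The $\Theta(\sqrt{n})$ unit columns guaranteed by Fristedt are then used only to adjust the piece sizes exactly, not to carry the main argument.
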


\begin{thm}
Let $m \geq 0$. Then $\varrho_m^{\otimes 2}$ contains almost all partitions of $\binom{m+1}{2}$ with respect to the Plancherel measure. 
\label{thm:planch}
\end{thm}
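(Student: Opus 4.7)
The plan is to combine the semigroup property for Kronecker coefficients (in the sharpened form due to Ikenmeyer) with concentration of the Plancherel measure on the Vershik--Kerov--Logan--Shepp (VKLS) limit shape.

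First I would exploit an additive decomposition of the staircase. Under coordinate-wise addition of partitions padded by zeros, one has $\varrho_m = \varrho_{m-1} + (1^m)$. The semigroup property then yields $g_{\varrho_m,\varrho_m}^{\mu} \geq g_{\varrho_{m-1},\varrho_{m-1}}^{\nu}\cdot g_{(1^m),(1^m)}^{(m)}$ whenever $\mu = \nu + (m)$ coordinate-wise; since $(1^m) \otimes (1^m)$ is trivial the second factor equals $1$, so positivity for $\mu$ reduces to positivity for a smaller staircase. Iterating this and choosing the column-strip pieces $(1^k)$ flexibly at each step produces a rich sufficient condition $C(\mu)$ for $g_{\varrho_m,\varrho_m}^{\mu} > 0$, which is essentially the content of Ikenmeyer's dominance-order criterion to be recalled later in the paper.

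Second, I would invoke the VKLS limit shape theorem: for $n = \binom{m+1}{2}$, with Plancherel probability $1-o(1)$ the rescaled row profile $\mu_i/\sqrt{n}$ is uniformly close to an explicit deterministic curve $\Omega$, so the partial row-sums of a Plancherel-random $\mu$ are determined up to $o(n)$ error. I would then verify that this limiting profile satisfies $C(\mu)$, i.e.\ that iteratively peeling off column strips from $\varrho_m$ and from $\mu$ in parallel leaves a valid partition residue at every stage, each of which is itself handled by the same argument at a smaller scale.

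The main obstacle will be this comparison. The Plancherel-typical partition has first row $\mu_1 \approx 2\sqrt{n} = m\sqrt{2}$ and about $2\sqrt{n}$ nonzero parts, while $\varrho_m$ has first row $m$ and $m$ parts; thus neither $\mu$ nor $\varrho_m$ dominates the other in a strict sense, and one must use the full flexibility of the semigroup decompositions rather than a single comparison. Once the analytic comparison against $\Omega$ is established, the $(1-o(1))$ Plancherel mass follows from VKLS concentration together with standard tail estimates (e.g.\ Baik--Deift--Johansson) controlling the atypical partitions that deviate macroscopically from the limit shape; since we only need measure $1 - o(1)$ rather than full measure, the rare bad event can simply be discarded.
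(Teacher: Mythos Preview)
Your proposal correctly identifies the two main ingredients---the semigroup property and the VKLS limit shape---and correctly diagnoses the central obstacle: a Plancherel-typical $\mu$ has $\mu_1\approx m\sqrt{2}$ and height $\approx m\sqrt{2}$, so it is \emph{not} dominance-comparable to $\varrho_m$, and Ikenmeyer's criterion alone does not apply. However, the proposal does not actually overcome this obstacle. The only concrete decomposition you offer is $\varrho_m=\varrho_{m-1}\Hplus(1^m)$, which via the semigroup property forces $\mu=\nu\Hplus(m)$, i.e.\ $\nu_1=\mu_1-m$ and $\nu_i=\mu_i$ for $i\ge2$. For $\nu$ to be a partition you need $\mu_1-\mu_2\ge m$, but for Plancherel-random $\mu$ one has $\mu_1-\mu_2=\Theta(n^{1/6})$ while $m=\Theta(n^{1/2})$, so this step fails with high probability. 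Iterating or ``choosing the column-strip pieces flexibly'' does not help: the only constituent of $(1^k)\otimes(1^k)$ is the trivial $(k)$, so each peel still removes a full row of prescribed length from $\mu$, and the same obstruction recurs. The phrase ``use the full flexibility of the semigroup decompositions'' is precisely where the missing idea should be, and it is not supplied.

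The paper's proof resolves this by a different decomposition. One writes $\varrho_{2k}=(\varrho_k\Hplus\varrho_{k-1})\Vplus(\varrho_k\Hplus\varrho_k)$ and correspondingly splits $\mu$ as a horizontal sum $\mu=\lambda_1\Hplus\lambda_2\Hplus\lambda_3\Hplus\lambda_4$ by distributing the columns of $\mu$ cyclically mod $4$ among the four pieces. Each $\lambda_i$ then has essentially the same \emph{shape} as $\mu$ but one quarter of the area; after rescaling to compare with the smaller staircase $\varrho_k$, this amounts to applying $(x,y)\mapsto(x/2,2y)$ to the limit shape. The resulting stretched shape \emph{is} dominated by $\varrho_\infty$ (checked via convexity of the VKLS curve), so Ikenmeyer's criterion now applies to each piece. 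A separate technical lemma ($\beta$-sum-flexibility of Plancherel partitions) is needed to ensure the four pieces can be made to have exactly the right sizes. This ``split into four congruent pieces to gain dominance'' step is the heart of the argument, and nothing in your outline substitutes for it.
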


\begin{rem}

We use the phrases ``almost all" and ``with high probability" throughout this paper to mean that a sequence of probabilities tends to $1$ as the parameter $n$ or $m$ tends to infinity. For example, Theorem~\ref{thm:unif} states that the probability $p(m)$ that a uniformly random partition $\lambda\vdash\binom{m}{2}$ is contained in $\varrho_m^{\otimes 2}$ converges to $1$ as $m$ grows large. In particular, this language implies nothing about the rate of convergence. 

\end{rem}

The Plancherel measure will be discussed in Section~\ref{subsec:measure}.

\section{Basic Tools}
\subsection{Partitions and Representations}
\label{subsec:basics}

Recall that irreducible representations of the symmetric group $S_n$ correspond to Young diagrams, or equivalently to partitions $\lambda \vdash n$ of $n$. We will use Young diagrams and partitions interchangeably, and we denote by $P_n$ the set of partitions of $n$. We will use multiple orientations on Young diagrams: as is conventional, we call the coordinates depicted below at left \textit{English}, those in the middle \textit{French}, and those at right \textit{Russian}. We use $1_n$ to denote the trivial representation, $1^n$ the alternating representation, and $\lambda '$ the conjugate of $\lambda$.


\begin{figure}[ht]
\label{fig:Orientation}
\centering
\begin{minipage}[b]{0.3\linewidth}
\includegraphics[scale=0.4]{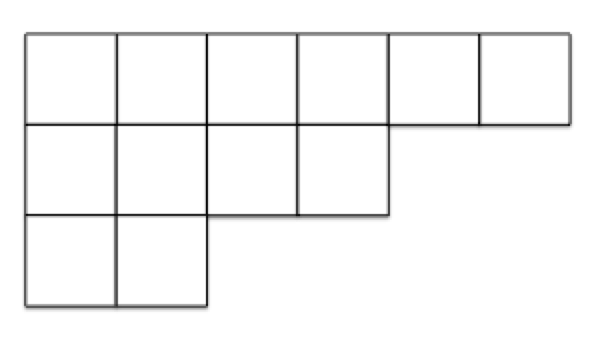}
\caption{English Coordinates}
\label{fig:minipage1}
\end{minipage}
\quad\begin{minipage}[b]{0.3\linewidth}
\includegraphics[scale=0.4]{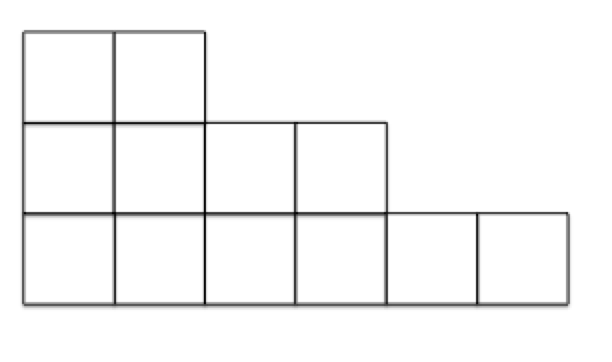}
\caption{French Coordinates}
\label{fig:minipage2}
\end{minipage}
\quad
\begin{minipage}[b]{0.3\linewidth}
\includegraphics[scale=0.4]{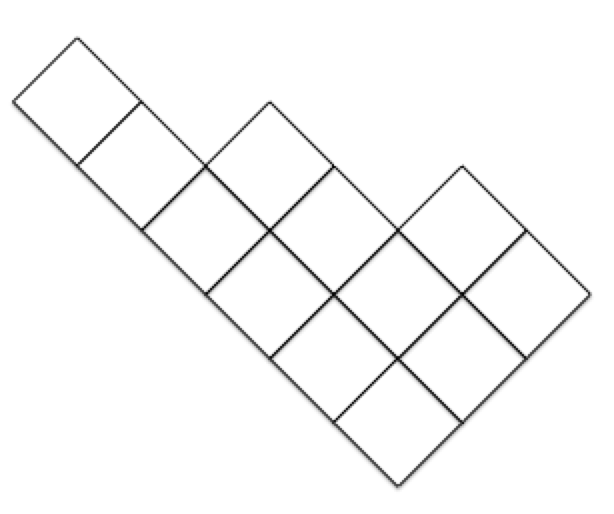}
\caption{Russian Coordinates}
\label{fig:minipage3}
\end{minipage}
\end{figure}

\begin{lem} [{\cite[Section 4.1]{FH}}] Let $\lambda \vdash n$. Then $\lambda' = \lambda \otimes 1^n$.
\label{lem:conjalt}
\end{lem}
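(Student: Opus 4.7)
The plan is to compare characters, since irreducible representations of $S_n$ are determined up to isomorphism by their characters. Writing $\chi^{\lambda}$ for the character of the irreducible representation associated to $\lambda$, the character of $\lambda \otimes 1^n$ at $\sigma \in S_n$ is just $\mathrm{sgn}(\sigma)\cdot\chi^{\lambda}(\sigma)$, so it suffices to establish the identity
\[ \chi^{\lambda'}(\sigma) = \mathrm{sgn}(\sigma)\cdot \chi^{\lambda}(\sigma) \quad \text{for every } \sigma \in S_n. \]

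The natural tool here is the Murnaghan--Nakayama rule: if $\sigma$ has cycle type $\mu = (\mu_1,\mu_2,\dots)$, then
\[ \chi^{\lambda}(\sigma) = \sum_T \prod_i (-1)^{h_i(T)-1}, \]
where $T$ ranges over sequences of successive border-strip removals from $\lambda$ with strips of sizes $\mu_1,\mu_2,\dots$, and $h_i(T)$ is the height (number of rows) of the $i$th strip removed. I would then argue that transposing Young diagrams sets up a bijection between such sequences for $\lambda$ and those for $\lambda'$: a border strip of length $\mu_i$ and height $h$ in $\lambda$ corresponds, under transposition, to a border strip in $\lambda'$ of the same length but height $\mu_i - h + 1$ (since transposing swaps rows and columns of the strip, and a border strip of length $\ell$ has row count plus column count equal to $\ell+1$).

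The contribution of each such strip to $\chi^{\lambda'}(\sigma)$ is therefore $(-1)^{\mu_i - h}$ rather than $(-1)^{h-1}$, differing by a factor of $(-1)^{\mu_i - 1}$. Multiplying over all cycles gives
\[ \chi^{\lambda'}(\sigma) = \Bigl(\prod_i (-1)^{\mu_i - 1}\Bigr) \chi^{\lambda}(\sigma) = \mathrm{sgn}(\sigma)\cdot \chi^{\lambda}(\sigma), \]
since a permutation of cycle type $(\mu_1,\mu_2,\dots)$ has sign $\prod_i (-1)^{\mu_i - 1}$. I would expect the main obstacle to be carefully verifying the transposition bijection and the height correspondence for border strips, which is a standard but slightly fiddly combinatorial check; once that is in place the sign bookkeeping is immediate.
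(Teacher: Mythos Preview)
Your argument via the Murnaghan--Nakayama rule is correct: the transposition bijection on border-strip tableaux sends a strip of length $\mu_i$ and height $h$ to one of height $\mu_i-h+1$, and the sign bookkeeping you give is accurate. The paper itself does not supply a proof of this lemma at all; it simply cites it as a standard fact from \cite[Section~4.1]{FH}. In Fulton--Harris the argument proceeds instead through the explicit construction of Specht modules via Young symmetrizers, where conjugating $\lambda$ swaps the row-symmetrizer and column-antisymmetrizer and thereby introduces the sign twist directly at the level of the module. Your character-theoretic route is equally valid and arguably more self-contained, since it avoids invoking the Young symmetrizer machinery; the trade-off is that it presupposes Murnaghan--Nakayama, which is itself a nontrivial combinatorial result.
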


For simplicity, we define an indicator function for constituency.
 
\begin{defn}
Let $c(\lambda,\mu,\nu)$ be the statement that  $g_{\lambda,\mu}^{\nu}>0$.
\end{defn}

We first establish some well-known, basic properties of $c(\lambda,\mu,\nu)$.

\begin{lem} The function $c$ is symmetric in its three arguments.

\end{lem}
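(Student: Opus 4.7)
The plan is to observe that the Kronecker coefficient itself is symmetric as an integer-valued function of its three arguments, so the indicator $c$ inherits the symmetry immediately. I would invoke the character-theoretic formula
\[
g_{\lambda,\mu}^{\nu} \;=\; \langle \chi^{\lambda}\chi^{\mu},\, \chi^{\nu}\rangle \;=\; \frac{1}{n!}\sum_{\sigma \in S_n} \chi^{\lambda}(\sigma)\,\chi^{\mu}(\sigma)\,\overline{\chi^{\nu}(\sigma)},
\]
which expresses $g_{\lambda,\mu}^{\nu}$ as the multiplicity of $\chi^{\nu}$ in the product character $\chi^{\lambda}\chi^{\mu}$.

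Next I would recall the standard fact that the irreducible characters of $S_n$ are real-valued (indeed, integer-valued), so the complex conjugation on $\chi^{\nu}(\sigma)$ is cosmetic. The resulting expression
\[
g_{\lambda,\mu}^{\nu} \;=\; \frac{1}{n!}\sum_{\sigma \in S_n} \chi^{\lambda}(\sigma)\,\chi^{\mu}(\sigma)\,\chi^{\nu}(\sigma)
\]
is then manifestly invariant under any permutation of $\lambda,\mu,\nu$, since multiplication of the three real numbers $\chi^{\lambda}(\sigma), \chi^{\mu}(\sigma), \chi^{\nu}(\sigma)$ is commutative pointwise in $\sigma$.

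Finally, since $g_{\lambda,\mu}^{\nu}$ is a nonnegative integer that is symmetric in its three arguments, the condition $g_{\lambda,\mu}^{\nu} > 0$ — i.e.\ the statement $c(\lambda,\mu,\nu)$ — is symmetric as well. There is no real obstacle here; the only ingredient beyond Schur orthogonality is the reality of symmetric group characters, which itself follows from the fact that each $\sigma \in S_n$ is conjugate to $\sigma^{-1}$ (any permutation has the same cycle type as its inverse).
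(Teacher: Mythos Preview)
Your proof is correct and takes essentially the same approach as the paper: both use the character-theoretic expression for $g_{\lambda,\mu}^{\nu}$ together with the reality of $S_n$-characters to conclude symmetry. The only cosmetic difference is that the paper phrases this via the inner product identity $\langle \chi^{\lambda}\chi^{\mu},\chi^{\nu}\rangle = \langle \chi^{\lambda},\chi^{\mu}\chi^{\nu}\rangle$, whereas you expand the sum explicitly.
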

\begin{proof}

Let $\chi^V:S_n\to\mathbb C$ denote the character function for a representation $V$. We have
$$g_{\lambda,\mu}^{\nu}=\langle \chi^{\lambda}\chi^{\mu},\chi^{\nu}\rangle =\langle \chi^{\lambda},\chi^{\mu}\chi^{\nu}\rangle=g_{\mu,\nu}^{\lambda},$$
and similarly for other permutations. Here we use the facts that all representations of $S_n$ have real characters and that $\chi^{V\otimes W}=\chi^V\chi^W$.

\end{proof}
\begin{lem}
\label{lem:conj2}

If $c(\lambda,\mu,\nu)$ then $c(\lambda',\mu',\nu)$.

\end{lem}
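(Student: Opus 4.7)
The plan is to reduce this to Lemma~\ref{lem:conjalt}, which identifies the conjugate of a partition with its tensor product with the sign representation. First I would write $\lambda' \cong \lambda \otimes 1^n$ and $\mu' \cong \mu \otimes 1^n$, so that
\[
\lambda' \otimes \mu' \;\cong\; \lambda \otimes \mu \otimes 1^n \otimes 1^n.
\]
The key observation is that $1^n \otimes 1^n \cong 1_n$: both factors are one-dimensional with character equal to the sign $\operatorname{sgn}$, and $\operatorname{sgn}^2 \equiv 1$ on $S_n$, so their tensor product has trivial character. Hence $\lambda' \otimes \mu' \cong \lambda \otimes \mu$ as $S_n$-representations.

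Taking the multiplicity of $\nu$ on both sides yields $g_{\lambda',\mu'}^{\nu} = g_{\lambda,\mu}^{\nu}$, which is actually stronger than the statement claimed: we get an ``if and only if'' rather than a one-way implication. In particular, positivity is preserved, giving $c(\lambda,\mu,\nu) \Leftrightarrow c(\lambda',\mu',\nu)$.

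There is no real obstacle here; the only thing to double-check is that tensoring with $1^n$ commutes with tensor products in the expected way, which is immediate since characters are multiplicative under tensor product. The proof is essentially a one-line character computation once Lemma~\ref{lem:conjalt} is invoked.
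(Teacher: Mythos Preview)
Your proof is correct and follows essentially the same approach as the paper: both invoke Lemma~\ref{lem:conjalt} to write $\lambda'\otimes\mu' = (\lambda\otimes 1^n)\otimes(\mu\otimes 1^n) = \lambda\otimes\mu$ (using $1^n\otimes 1^n = 1_n$), and then read off equality of Kronecker coefficients. Your observation that this actually gives $g_{\lambda',\mu'}^{\nu} = g_{\lambda,\mu}^{\nu}$, hence an ``if and only if,'' is a nice explicit strengthening that the paper leaves implicit.
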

\begin{proof}
By Lemma~\ref{lem:conjalt} we have $\lambda'\otimes\mu'= (\lambda\otimes 1^n) \otimes (\mu\otimes 1^n)=(\lambda\otimes \mu) \otimes (1^n\otimes 1^n) = \lambda\otimes\mu$, so the result follows by the definition of the Kronecker coefficient.
\end{proof}

There is one more special representation that we will use repeatedly.

\begin{defn}
The \textit{standard representation} of $S_n$ is $\tau_n=\operatorname{Ind}_{S_{n-1}}^{S_n}(1)$. Equivalently, $\tau_n$ is the $n$-dimensional representation in which $S_n$ acts by permuting $n$ basis vectors $v_1\dots v_n$ in the usual  way.

\end{defn}

It is well known that $\tau_n$ is the sum of the irreducible representations corresponding to the partitions $(n), (n-1,1)$; the trivial component comes from the vector $(v_1+\dots+v_n)$, which all elements of $S_n$ fix. It is easy to see from the above description that the remaining part corresponding to $(n-1,1)$ is a faithful representation, as claimed above.

It is known that for an irreducible representation $\lambda$, the tensor product $\tau_n \otimes \lambda$ is the formal sum (with multiplicity) of all partitions which can be formed by moving a single square in the Young diagram for $\lambda$ (including $\lambda$ itself). (Pieri's Rule, \cite{FH} Ex. 4.44). This fact will be used extensively later in the paper. 

Finally, we recall the definition of the Durfee square.

\begin{defn}
The \textit{Durfee length} $d(\lambda)$ of a partition $\lambda$ is the largest integer $r$ with $\lambda_r\geq r$.

\end{defn}

\begin{defn}
The \textit{Durfee square} of a partition $\lambda$ is the square of side length $d(\lambda)$ with the principal diagonal as its diagonal  (beginning in the upper-left corner in English coordinates), considered as a subset of $\lambda$ in the plane.

\end{defn}

\subsection{The Semigroup Property and Dominance Ordering}
\label{subsec:semigrp}

We extensively use the $\textit{semigroup property}$, which was proved in \cite{semigp}. To state this, we first define the \textit{horizontal sum} of partitions, in which we add row lengths, or equivalently take the disjoint union of the multisets of column lengths.

\begin{defn}
The \textit{horizontal sum} $\lambda\Hplus\lambda_2$ of partions $\lambda=(\lambda_1,\lambda_2, \dots ) \vdash n_1$ and $\mu =(\mu_1, \mu_2 \dots) \vdash n_2$ is the partition $(\lambda_1+\mu_1, \lambda_2+\mu_2,  \dots ) \vdash (n_1+n_2)$.
\end{defn}

We also define horizontal scalar multiplication by positive integers on partitions, simply by repeated addition.

\begin{defn}
For $k\geq 0$, define the horizontal scalar multiple $k\subH\lambda$ by $k\subH\lambda=\lambda\Hplus\lambda\Hplus\dots\Hplus \lambda$, where we add $k$ copies of $\lambda$. 
\end{defn}

We also define vertical addition and scalar multiplication analogously, by adding column lengths instead.

\begin{defn}
We define the \textit{vertical sum} $\lambda_1\Vplus\lambda_2$ of $\lambda_1,\lambda_2$ to be $(\lambda_1'\Hplus\lambda_2')'$
\end{defn}

\begin{defn}
Define the vertical scalar multiple $k\subV\lambda$ by $k\subV\lambda=\lambda\Vplus\lambda\Vplus\dots \lambda$ where we add $k$ copies of $\lambda$. 
\end{defn}

We now state the \textit{semigroup property}.

\begin{thm} [Semigroup Property, {\cite[Theorem~3.1]{semigp}}]
\label{thm:hsum}
If $c(\lambda_1, \lambda_2, \lambda_3)$ and $c(\mu_1,\mu_2, \mu_3)$ then $c(\lambda_1\Hplus\mu_1, \lambda_2\Hplus\mu_2, \lambda_3 \Hplus \mu_3).$

\end{thm}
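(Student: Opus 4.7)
The plan is to prove the semigroup property by translating it, via Schur--Weyl duality, into a multiplicativity statement for highest weight vectors in an appropriate ring. The dictionary I would use is: for finite-dimensional complex vector spaces $U$ and $W$, decomposing $(U \otimes W)^{\otimes n} \cong U^{\otimes n} \otimes W^{\otimes n}$ as a $GL(U) \times GL(W) \times S_n$-module in two ways yields the branching formula
$$V_\nu^{GL(U \otimes W)}\big|_{GL(U) \times GL(W)} \;\cong\; \bigoplus_{\lambda, \mu} g_{\lambda, \mu}^{\nu}\, V_\lambda^{GL(U)} \otimes V_\mu^{GL(W)},$$
valid whenever $\dim U$ and $\dim W$ are large enough that all three partitions fit as dominant weights. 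Consequently $c(\lambda, \mu, \nu)$ is equivalent to the existence of a nonzero $B_U \times B_W$-highest weight vector of bi-weight $(\lambda, \mu)$ inside $V_\nu^{GL(U \otimes W)}$.

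Next, I would encode all such highest weight vectors into a single graded integral domain. Set $G = GL(U \otimes W)$, fix a Borel $B = TN$, and consider the algebra $R = \mathbb{C}[G]^N$ of right-$N$-invariant regular functions on $G$. Classically $R = \bigoplus_\nu V_\nu^G$ as a $G \times T$-module, with the $\nu$-piece being the right-$T$-weight-$\nu$ subspace. Since $G$ is connected, $\mathbb{C}[G]$ is an integral domain, and hence so is $R$. Letting $GL(U) \times GL(W)$ act on $R$ by left translation through its inclusion in $G$, I would pass to the subring $R^{N_U \times N_W}$ of invariants under the unipotent radicals of chosen Borels of the two factors. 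This subring remains an integral domain, and it decomposes as $\bigoplus_{\lambda, \mu, \nu} R_{\lambda, \mu, \nu}$ with $\dim R_{\lambda, \mu, \nu} = g_{\lambda, \mu}^{\nu}$ by the branching formula.

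Given the hypotheses, I would then choose $U$ and $W$ large enough to accommodate all six partitions, pick nonzero $f_1 \in R_{\lambda_1, \lambda_2, \lambda_3}$ and $f_2 \in R_{\mu_1, \mu_2, \mu_3}$, and form the product $f_1 f_2$. Because $R^{N_U \times N_W}$ is an integral domain, $f_1 f_2 \neq 0$; and because multiplying weight vectors adds weights while horizontal partition sum is precisely coordinate-wise addition of the row-length tuples that parametrize $GL$-dominant weights, the product lies in $R_{\lambda_1 \Hplus \mu_1,\, \lambda_2 \Hplus \mu_2,\, \lambda_3 \Hplus \mu_3}$. This forces $g_{\lambda_1 \Hplus \mu_1,\,\lambda_2 \Hplus \mu_2}^{\lambda_3 \Hplus \mu_3} > 0$. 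The main obstacle I anticipate is purely bookkeeping: verifying the branching formula in exactly this form and ensuring that horizontal (rather than vertical) partition sum really matches weight addition in $R$---the latter comes down to noting that the $i$-th row length of a partition corresponds to the $i$-th coordinate of the $GL$-dominant weight, so no conjugation enters. Once these setup facts are in place, the proof collapses to the tautology that the product of two nonzero elements of a domain is nonzero.
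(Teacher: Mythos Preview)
Your argument is correct: the interpretation of Kronecker coefficients as branching multiplicities for $GL(U\otimes W)\downarrow GL(U)\times GL(W)$ via Schur--Weyl duality is standard, the ring $\mathbb{C}[G]^N$ is indeed an integral domain graded by highest weights, passing to $N_U\times N_W$-invariants preserves this, and horizontal partition sum is exactly coordinate-wise addition of dominant weights, so multiplying nonzero highest weight vectors does the job.

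Note, however, that the paper does not prove Theorem~\ref{thm:hsum} at all; it is quoted from the literature (the reference \cite{semigp}) and used as a black box throughout. So there is no ``paper's own proof'' to compare against here. That said, your reconstruction is essentially the argument in the cited source: the original proof also realizes the Kronecker coefficients as dimensions of weight spaces in a graded commutative algebra with no zero divisors (the ring of $U$-invariants, or equivalently covariants, on a flag variety for $GL(U\otimes W)$), and then observes that multiplication of nonzero weight vectors produces a nonzero vector of the summed weight. Your write-up matches this line for line; the only cosmetic difference is that some treatments phrase it in terms of sections of line bundles on $G/B$ rather than $\mathbb{C}[G]^N$, but these are the same ring.
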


We can use induction on Theorem~\ref{thm:hsum} to extend the semigroup property to arbitrary numbers of partitions. However, we will not need this for the bulk of our paper, so we refer the reader to Appendix~\ref{subsec:rectcube}. 

We now give a modified version of \ref{thm:hsum} using vertical sums.

\begin{cor} If $c(\lambda_1, \lambda_2, \lambda_3)$ and $c(\mu_1,\mu_2, \mu_3)$, then $c(\lambda_1\Vplus\mu_1, \lambda_2\Vplus\mu_2, \lambda_3 \Hplus \mu_3).$

\begin{proof}

By Lemma~\ref{lem:conj2} we have $c(\lambda'_1,\lambda'_2,\lambda_3)$ and $c(\mu'_1,\mu'_2,\mu_3)$. Then the semigroup property yields $c(\lambda'_1\Hplus \mu'_1,\lambda'_2\Hplus \mu'_2,\lambda_3\Hplus \mu_3)$. Applying Lemma~\ref{lem:conj2} again yields the result.

\end{proof}

\end{cor}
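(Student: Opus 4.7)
The plan is to reduce the claim to Theorem~\ref{thm:hsum} by conjugating the first two arguments back and forth. Recall that $\Vplus$ is defined in terms of $\Hplus$ via conjugation: $\lambda \Vplus \mu = (\lambda' \Hplus \mu')'$. So the conclusion $c(\lambda_1 \Vplus \mu_1, \lambda_2 \Vplus \mu_2, \lambda_3 \Hplus \mu_3)$ is equivalent to $c((\lambda'_1 \Hplus \mu'_1)', (\lambda'_2 \Hplus \mu'_2)', \lambda_3 \Hplus \mu_3)$, and Lemma~\ref{lem:conj2} tells us that we may toggle primes on the first two arguments simultaneously without affecting constituency. This suggests the following three-step procedure.

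First, I would apply Lemma~\ref{lem:conj2} to each hypothesis to replace $c(\lambda_1,\lambda_2,\lambda_3)$ with $c(\lambda'_1,\lambda'_2,\lambda_3)$ and $c(\mu_1,\mu_2,\mu_3)$ with $c(\mu'_1,\mu'_2,\mu_3)$. The key point is that the third argument is left untouched, so $\lambda_3$ and $\mu_3$ remain unchanged. Second, I would invoke the horizontal semigroup property (Theorem~\ref{thm:hsum}) on these two conjugated instances to conclude $c(\lambda'_1 \Hplus \mu'_1, \lambda'_2 \Hplus \mu'_2, \lambda_3 \Hplus \mu_3)$. Third, I would apply Lemma~\ref{lem:conj2} once more to conjugate the first two slots, giving $c((\lambda'_1 \Hplus \mu'_1)', (\lambda'_2 \Hplus \mu'_2)', \lambda_3 \Hplus \mu_3)$, which by definition of $\Vplus$ is exactly the desired conclusion.

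There is no real obstacle here; the main thing to verify is that the conjugation lemma really does leave the third argument alone, which is what forces the asymmetry between the first two slots (combined by $\Vplus$) and the third slot (combined by $\Hplus$) in the conclusion. In particular, one could not get a fully vertical version of the semigroup property by this trick, because conjugating all three arguments and then adding horizontally would not produce the conjugate of a horizontal sum in the third slot.
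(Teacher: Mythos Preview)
Your proof is correct and follows exactly the same three-step route as the paper: apply Lemma~\ref{lem:conj2} to each hypothesis, invoke the horizontal semigroup property (Theorem~\ref{thm:hsum}), and then apply Lemma~\ref{lem:conj2} once more. Your closing remark about why a fully vertical version fails also matches the paper's observation immediately following the corollary.
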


In other words, in using the semigroup property we are allowed to use an even number of vertical additions in each step. It is \textit{not} true that vertically adding all 3 partitions preserves constituency. For example, we have $c((1),(1),(1))$ for the trivial representations of $S_1$, but vertically adding this to itself gives that the alternating representation of $S_2$ is contained in its own tensor square. This tensor square is just the trivial representation which, of course, does not contain the alternating representation.

We will also extensively use the following result from \cite{IkenDom}. First we recall the notion of dominance ordering, which gives a partial ordering on partitions of $n$.

\begin{defn}
Let $\lambda, \mu \vdash n$. We say that $\lambda$ \textit{dominates} $\mu$ (or $\lambda \succeq \mu$) if for all $k \geq 1$, $\sum_{i=1}^{k}\lambda_i \geq \sum_{i=1}^{k}\mu_i$. 
\end{defn}

\begin{thm} [{\cite[Thm~2.1]{IkenDom}}]
\label{thm:dominance}
Let $m\geq 1$. Then $\varrho_m \otimes \varrho_m$ contains all partitions $\lambda$ which are dominance-comparable to $\varrho_m$.
\end{thm}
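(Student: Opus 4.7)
The plan is to reduce to the case $\lambda \succeq \varrho_m$ via the self-conjugacy of $\varrho_m$, and then prove this case by induction on $m$ using the semigroup property.

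For the reduction, I use the fact that $\varrho_m = \varrho_m'$. Combining Lemma~\ref{lem:conj2} with the symmetry of $c$ gives $c(\varrho_m, \varrho_m, \lambda) \iff c(\varrho_m, \varrho_m, \lambda')$: rotate $c(\varrho_m, \varrho_m, \lambda)$ to $c(\lambda, \varrho_m, \varrho_m)$, apply Lemma~\ref{lem:conj2} to obtain $c(\lambda', \varrho_m', \varrho_m) = c(\lambda', \varrho_m, \varrho_m)$, and rotate back. Since dominance is reversed under conjugation for partitions of the same size (so $\lambda \preceq \varrho_m \iff \lambda' \succeq \varrho_m$), we may henceforth assume $\lambda \succeq \varrho_m$.

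For the main argument, I would seek a decomposition $\varrho_m = \sigma_1 \Hplus \cdots \Hplus \sigma_r$ (with an even number of $\Vplus$ permitted, via the corollary to Theorem~\ref{thm:hsum}) together with a matching $\lambda = \tau_1 \Hplus \cdots \Hplus \tau_r$ such that each $c(\sigma_i, \sigma_i, \tau_i)$ holds by induction or as a base case. Useful base cases include $c((k), (k), (k))$ and $c((1^k), (1^k), (k))$ (both tensor squares are trivial), as well as $c(\nu, \nu, 1_{|\nu|})$ for any $\nu$ and $c(\nu, \nu, 1^{|\nu|})$ for self-conjugate $\nu$ (from Lemma~\ref{lem:conj2}). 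The naive choice $\varrho_m = (1^m) \Hplus \varrho_{m-1}$ forces $\lambda = (m) \Hplus \tau$, since $(1^m)^{\otimes 2}$ has only $(m)$ as a constituent; this is too rigid for general $\lambda \succeq \varrho_m$. To gain flexibility, I would peel off larger self-conjugate chunks, for instance via the decomposition $\varrho_m = \varrho_k \Vplus \bigl[(k^{m-k}) \Hplus \varrho_{m-k}\bigr]$ for a well-chosen $k$, reducing (by the corollary) to a smaller staircase $\varrho_k$, handled by induction, together with a ``rectangle-plus-staircase'' remainder susceptible to further semigroup-style decomposition.

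The main obstacle is the combinatorial matching: for each $\lambda \succeq \varrho_m$, one must choose $k$ and a decomposition of $\lambda$ so that every piece can be handled by induction or a base case. Controlling the tensor squares of rectangle-plus-staircase remainders, and threading the dominance condition $\lambda \succeq \varrho_m$ through this recursion, is where the essential content lies; I would expect to tune $k$ adaptively based on the partial-sum profile of $\lambda$ relative to that of $\varrho_m$, choosing a split where the partial sums of $\lambda$ align well enough with those of $\varrho_m$ that both sub-pieces inherit dominance-comparability with their respective staircases.
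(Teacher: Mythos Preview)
Your reduction to the case $\lambda \succeq \varrho_m$ via conjugation is correct and matches how the paper handles this symmetry. However, the remainder of your proposal is not a proof but a plan, and one with a genuine gap you yourself flag: the ``combinatorial matching'' is never carried out. You propose the decomposition $\varrho_m = \varrho_k \Vplus \bigl[(k^{m-k}) \Hplus \varrho_{m-k}\bigr]$, but the induction hypothesis only gives you control over tensor squares of \emph{staircases}, not of the rectangle-plus-staircase block $(k^{m-k}) \Hplus \varrho_{m-k}$. You would need to know which partitions lie in the tensor square of that block, and nothing in your toolkit supplies this. The adaptive choice of $k$ is also left entirely unspecified. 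As written, the argument does not close.

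More importantly, the paper's approach (following \cite{IkenDom}, and generalized in Theorem~\ref{thm:gendom}) is entirely different and does not use the semigroup property at all. It is a character-theoretic argument via \emph{Young hypergraphs}: one builds a hypergraph on the boxes of $\mu$ with three layers of hyperedges (two copies of the columns of $\mu$, assembled in two different greedy orders, and one layer encoding $\nu$), and shows that this hypergraph admits a \emph{unique} perfect labelling. A lemma from \cite{IkenDom} then converts uniqueness of the labelling into positivity of the Kronecker coefficient. The condition $\nu \succeq \mu$ enters precisely as the criterion for the $\nu$-layer to exist (it is equivalent to the existence of a filling of $\nu$ with content $\mu$ and column-distinct entries). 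This argument is self-contained and does not recurse on $m$.

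In the logical structure of this paper, Theorem~\ref{thm:dominance} is a \emph{base case} that feeds the semigroup machinery, not a consequence of it; trying to derive it from the semigroup property reverses that dependency and, as your own discussion of the ``main obstacle'' suggests, runs into pieces (rectangles, rectangle-plus-staircase shapes) for which the semigroup property alone gives no leverage.
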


\begin{rem}
\label{rem:gendom}
We have also generalized Theorem~\ref{thm:dominance} to arbitrary partitions with distinct row lengths (see Theorem~\ref{thm:gendom}), which seems potentially useful for extending the applicability of the semigroup property.
\end{rem}

The main method used throughout the paper will be to try to express triples $(\varrho_k,\varrho_k,\lambda)$ as sums of smaller triples, each of which satisfies constituency because of Theorem~\ref{thm:dominance}, and then conclude $c(\varrho_k,\varrho_k,\lambda)$ via the semigroup property. This method is powerful because small staircases can be added together to form larger staircases. This will be explained throughout the following sections, which contain overviews of the proofs of our main results.

\section{Overview of the Probabilistic Approach}
\subsection{Partition Measures}
\label{subsec:measure}

In this section, we address a probabilistic weakening of the tensor square conjecture:

\begin{ques}
For large $m$, what is the probability that a random partition of $\binom{m+1}{2}$ is a constituent in the tensor square $\varrho_m^{\otimes 2}$?
\end{ques}

To answer this question, we must first put a probability distribution on the set $P_n$ of partitions of $n$. The most obvious choice is the uniform distribution.

\begin{defn}
\label{defn:unif}
The \textit{uniform measure} $U_n$ assigns probability $\frac{1}{|P_n|}$ to each distinct partition of $n$.
\end{defn}

There is another natural family of probability distributions on $P_n$ we investigate, which is rooted in representation theory.

\begin{defn}
\label{defn:planch}
The \textit{Plancherel measure} $M_n$ assigns to each $\lambda \vdash n$ probability $\frac{\dim (\lambda)^2}{n!}$. Here $\dim (\lambda)$ is the dimension of $\lambda$ as a representation of $S_n$.
\end{defn}

The value $\dim(\lambda)$ has the following famous combinatorial interpretation: it is the number of standard Young tableau of shape $\lambda$, i.e. bijective assignments of $(1,2,\dots,n)$ to the boxes of $\lambda$ such that the numbers increase along each row and column \cite{enumcombo}. \\

\subsection{Limit Shapes of Partition Measures} 

\label{subsec:limshape}
The uniform and Plancherel measures $U_n$ and $M_n$  give rise to different smooth limit shapes for large $n$; in each case, we may speak of the ``typical shape" of a large random partition. Given a Young diagram $\lambda$ of size $n$, we may shrink it by a linear-scale factor of $\frac{\sqrt{2}}{\sqrt{n}}$ so that it has area 2, and rotate it into Russian (diagonal) coordinates. This results in the graph of a function $f_{\lambda}$, and by defining $f_{\lambda}(x)=|x|$ past the boundary of the Young diagram, we get a function defined on the whole real line which satisfies 

$$f_{\lambda}(x) \geq |x|, \hspace{1cm} (1)$$
and
$$\int_{-\infty}^{\infty} (f_{\lambda}(x)-|x|) dx = 2,\hspace{1cm} (2)$$
and
$$|f_{\lambda}(x)-f_{\lambda}(y)| \leq |x-y|.\hspace{1cm} (3)$$

\begin{defn} The set $\mathcal{CY}$ of \textbf{continuous Young diagrams} is the set of functions $\mathbb{R}\rightarrow\mathbb{R}$ satisfying $(1), (2),$ and $(3)$.
\end{defn}

We also define a family $\mathcal{SY}$ of continuous Young diagrams in French coordinates, this time with area $1$. Given $f_1, f_2 \in \mathcal{CY}$, we change them into $F_1, F_2$ in French coordinates by simply rotating and then reflecting, and also dilating by a linear scale factor of $\frac{1}{\sqrt{2}}$. Taking the c\`adl\`ag version, we obtain a non-increasing function $(0,\infty) \rightarrow [0,\infty)$. 

\begin{defn}

Let $\mathcal{SY}$ be the set of functions $(0,\infty) \rightarrow [0,\infty)$ which are non-increasing, c\`adl\`ag, and have total integral 1.

\end{defn}

\begin{defn}
For $f_1 \in \mathcal{CY}$ , the \textit{straightening} $F_1 = S(f_1) \in \mathcal{SY}$ is the right-continuous function $\mathbb{R}^+ \rightarrow \mathbb{R}^+$ given by rotation to English coordinates of the graph of $f_1$ followed by reflection over the x-axis, and then dilation by a factor of $\frac{1}{\sqrt{2}}$.

\end{defn}

\begin{defn}

For $\lambda\vdash n$, define $F_{\lambda}=S(f_{\lambda})$.

\end{defn}

On $\mathcal{CY}$ we will use the supremum norm $d(f,g)=||f-g||_{\infty}$. We define the metric on $\mathcal{SY}$ to agree with $d$ under the canonical bijection $S$. This makes $d$ on $\mathcal{SY}$ simply twice the L\'{e}vy metric (\cite{Prob}).

Using the canonical map $P_n \rightarrow \mathcal{CY}$ given by $\lambda \mapsto f_{\lambda}$, we may pushforward the measures $M_n, U_n$ to finitely supported measures on $\mathcal{CY}$. This allows us to state the limit shape results precisely.

\begin{thm} [\cite{PlanchLimShape}]
\label{thm:planchlimshape}
The pushforwards of the measures $M_n$ converge in probability to the delta measure on the function $$f_M(x)=\left\{
     \begin{array}{lr}
       (\frac{2}{\pi})(x\arcsin(\frac{x}{2})+\sqrt{4-x^2}) & : x \leq 2,\\
       |x| & : |x| \geq 2.
     \end{array}
   \right.$$

\end{thm}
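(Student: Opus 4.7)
The plan is to prove this via the Logan--Shepp/Vershik--Kerov variational approach, whose starting point is the hook-length formula
$$\dim(\lambda) = \frac{n!}{\prod_{c\in\lambda} h(c)},$$
so that $M_n(\lambda) = n!/\prod_{c\in\lambda}h(c)^2$. First I would show that, uniformly in $\lambda\vdash n$,
$$-\frac{1}{n}\log M_n(\lambda) = \Phi(f_\lambda) + o(1),$$
where $\Phi:\mathcal{CY}\to\mathbb R$ is a continuous functional. Geometrically, in Russian coordinates the hook length at a cell $c$ is a horizontal-plus-vertical distance inside $\lambda$, so after rescaling the product $\prod h(c)^2$ becomes a double logarithmic integral of the form
$$\Phi(f) = \mathrm{const} - \tfrac{1}{2}\iint_{x<y} \log(y-x)\,(1-f'(x))(1+f'(y))\,dx\,dy$$
(plus boundary/normalization corrections). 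Converting a product of hooks into a Riemann sum for this integral is the core calculation; one must handle the boundary cells separately because $\log h$ diverges when $h$ is small.

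Second, I would solve the variational problem $\min_{f\in\mathcal{CY}}\Phi(f)$. The Euler--Lagrange equation for $f$ is a singular integral equation of Tricomi type, namely
$$\int \log|x-y|\,(1-f'(y))\,dy = \text{const on }\{f(x)>|x|\},$$
and the explicit solution, obtained by inverting a finite Hilbert transform, is precisely the semicircle-derivative profile
$$f_M(x) = \tfrac{2}{\pi}\bigl(x\arcsin(x/2) + \sqrt{4-x^2}\bigr) \quad\text{on }|x|\le 2,$$
with $f_M(x)=|x|$ elsewhere. Uniqueness of the minimizer would follow from strict convexity of $\Phi$, which comes from positive-definiteness of the logarithmic kernel on measures of total mass zero.

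Finally, I would convert the variational result into convergence in probability through a soft union bound. Since $|P_n| = e^{O(\sqrt n)}$ by Hardy--Ramanujan, while on the complement of any $\epsilon$-neighborhood of $f_M$ the functional satisfies $\Phi(f) \ge \Phi(f_M) + \delta(\epsilon)$ for some $\delta(\epsilon)>0$ by continuity and compactness of $\mathcal{CY}$ in the supremum metric, the earlier approximation gives
$$M_n\bigl(\{\lambda : d(f_\lambda,f_M)>\epsilon\}\bigr) \le |P_n|\cdot e^{-n\delta(\epsilon)+o(n)} \longrightarrow 0.$$
Compactness of $\mathcal{CY}$ under the sup norm (which reduces to Helly selection after straightening into $\mathcal{SY}$) lets us replace the union bound by a finite cover, so this last step is routine once the first two are in hand. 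The main obstacle is the uniform functional approximation in Step 1: individual hook lengths near the boundary are $O(1)$ and their logs do not average well, so one must either truncate cells within $O(\sqrt n)$ of the boundary and bound their contribution separately, or smooth $f_\lambda$ before comparing to $\Phi$.
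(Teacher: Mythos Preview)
The paper does not give its own proof of this theorem: it is stated with the citation \texttt{\textbackslash cite\{PlanchLimShape\}} and used as a black box, so there is nothing in the paper to compare your argument against. What you have sketched is exactly the classical Logan--Shepp / Vershik--Kerov variational proof that the cited reference carries out, so in that sense your proposal matches the literature proof the paper is invoking.

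One small caution on your Step~3: $\mathcal{CY}$ is not literally compact in the sup norm on all of $\mathbb{R}$ without a little work on the tails (you need that $f(x)-|x|\to 0$ uniformly over $\mathcal{CY}$ as $|x|\to\infty$, which follows from the Lipschitz bound plus the area constraint but should be stated). Also, in the union-bound step you need $\Phi(f_M)=0$ (equivalently $\min_{\mathcal{CY}}\Phi = 0$) and not merely that $f_M$ is the minimizer; otherwise $M_n(\{d(f_\lambda,f_M)>\epsilon\})\le |P_n|e^{-n(\Phi(f_M)+\delta)}$ does not go to zero on its own. These are standard ingredients in the original papers, but your write-up glosses over them.
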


\begin{thm} [\cite{UnifLimShape}]
\label{thm:uniflimshape}
The pushforwards of the measures $U_n$ converge in probability to the delta measure on the function $$f_U(x)=\frac{1}{b}\log(e^{-xb}+e^{xb})$$
where $b=\frac{\pi}{2\sqrt{6}}$.
\end{thm}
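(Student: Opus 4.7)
The plan is to use the grand canonical (Boltzmann) ensemble. Put the measure $\mu_q(\lambda) = q^{|\lambda|}\prod_{k\geq 1}(1-q^k)$ on the set of all partitions; under $\mu_q$ the multiplicities $m_k = \#\{i : \lambda_i = k\}$ are independent geometric random variables with $\mathbb{P}(m_k = j) = (1-q^k)q^{jk}$. Crucially, the uniform measure $U_n$ is exactly $\mu_q$ conditioned on $|\lambda| = n$ (since all partitions of size $n$ have the same $\mu_q$-weight $q^n$), so the strategy is to analyze the limit shape under $\mu_q$ for a well-chosen $q$ and then de-condition.

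I would tune $q = e^{-c/\sqrt{n}}$ with $c = \pi/\sqrt{6}$, the value that makes $\mathbb{E}_{\mu_q}[|\lambda|] = \sum_k k q^k/(1-q^k) \sim (\pi^2/(6c^2))\, n = n$ by a Riemann-sum comparison with $\int_0^\infty s/(e^s - 1)\,ds = \pi^2/6$. The same Riemann argument gives the expected French profile: the expected number of parts of size at least $y\sqrt{n}$ is $\sum_{k \geq y\sqrt{n}} q^k/(1-q^k)$, which after rescaling by $1/\sqrt{n}$ converges to $F_U(y) = -c^{-1}\log(1-e^{-cy})$. Passing to Russian coordinates via dilation by $\sqrt{2}$ and a $45^\circ$ rotation (the area-$1$ to area-$2$ convention of Section~\ref{subsec:limshape}) sends the French boundary point $(x, F_U(x))$ to $(u,v) = (x - F_U(x), x+F_U(x))$; solving $e^{-b(v-u)} = 1 - e^{-b(u+v)}$ with $b = c/2$ yields $v = b^{-1}\log(e^{bu}+e^{-bu})$, which is exactly $f_U(u)$ with $b = \pi/(2\sqrt{6})$. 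A quick check using $\int_0^\infty \log(1+e^{-s})\,ds = \pi^2/12$ confirms that the area between $f_U$ and $|u|$ equals $2$.

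Concentration in the sup-norm $d$ follows from independence of the $m_k$: a Chebyshev bound on partial sums gives pointwise concentration of the French profile at scale $O(n^{-1/2+\epsilon})$, and since the profile is automatically non-increasing, pointwise control on a fine grid of heights upgrades to uniform control by monotonicity, giving convergence in $\mu_q$-probability to $f_U$ in the metric $d$.

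The main obstacle is the final step, de-conditioning $\mu_q$ to $U_n$. One needs a local central limit theorem for $X = |\lambda| = \sum_k k m_k$ under $\mu_q$: since $\mathrm{Var}_{\mu_q}(X) = \sum_k k^2 q^k/(1-q^k)^2 = \Theta(n^{3/2})$, one must show $\mu_q(X = n)$ is of order $n^{-3/4}$ so that any $\mu_q$-event of probability $1 - o(n^{-3/4})$ has $U_n$-probability $1 - o(1)$. This requires a careful Fourier analysis of the characteristic function $\varphi(t) = \prod_k (1-q^k)/(1-q^k e^{itk})$: near $t = 0$ its logarithm expands as $itn - \sigma^2 t^2/2 + O(|t|^3)$, yielding a Gaussian window, while for $|t|$ bounded away from $0$ one needs a uniform decay bound exploiting that many $m_k$ are nondegenerate. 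Once this LLT is in hand, the concentration estimate from the previous paragraph, sharpened using exponential (rather than Chebyshev) moments on the $m_k$, becomes sharp enough to survive the conditioning.
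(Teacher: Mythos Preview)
The paper does not prove this theorem at all: it is quoted from the literature (the citation \cite{UnifLimShape}) and used as a black box, so there is no ``paper's own proof'' to compare against. Your sketch is a faithful outline of the classical Boltzmann/Fristedt argument that underlies the cited result, and the computations you give (tuning $q=e^{-c/\sqrt n}$ with $c=\pi/\sqrt 6$, the Riemann-sum identification of the French profile $e^{-cx}+e^{-cy}=1$, and the coordinate change to $f_U$) are correct.

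One comment on the sketch itself: the de-conditioning step is where the real work lies, and your proposed route via a local CLT plus sharpened (exponential-moment) concentration is the standard one, but it is genuinely delicate. In particular, the Chebyshev-level concentration you describe for the profile gives errors of polynomial order in $n$, which after dividing by $\mu_q(|\lambda|=n)\asymp n^{-3/4}$ is not automatically $o(1)$; you would need to push the concentration to $1-o(n^{-3/4})$ before conditioning, which is why the exponential-moment upgrade you mention is essential rather than optional. If you intend this as a self-contained proof rather than a pointer to the literature, that step would need to be written out carefully.
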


So under both uniform and Plancherel measure, for large $n$, almost all Young diagrams look like a smooth limiting curve if we zoom out to a constant-size scale. The limit shapes for the Plancherel and uniform measures are (respectively) shown below.

\begin{figure}[ht]
\label{fig:PlanchLimShape}
\centering
\begin{minipage}[b]{0.5\linewidth}
\includegraphics[scale=.5]{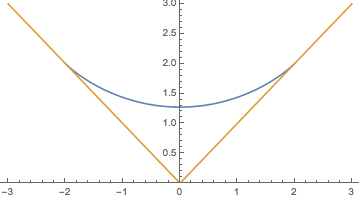}
\caption{Plancherel Limit Shape}
\label{fig:UnifLimShape}
\end{minipage}
\quad
\begin{minipage}[b]{0.3\linewidth}
\includegraphics[scale=.5]{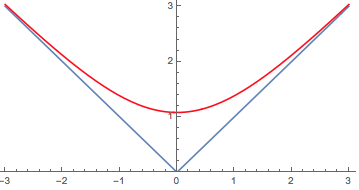}
\caption{Uniform Limit Shape}
\end{minipage}
\end{figure}

The limit shape of $U_n$ may be more elegantly described in French coordinates, where it is the graph of $e^{-\frac{\pi x}{\sqrt{6}}}+ e^{-\frac{\pi y}{\sqrt{6}}}=1$.

We will extend the definitions of length and height of Young diagrams to the continuous Young diagrams. For ordinary Young diagrams $\lambda \vdash n$, we have that the length of $\lambda$ is $\lambda_1=\sqrt{n}\sup(\{x|f_{\lambda}(x)=-x\})$ and the height is $\lambda'_1=\sqrt{n}\inf(\{x|f_{\lambda}(x)=x\})$, given by the lowest spots on the rotated axes that the functions meet. Defining the normalized length and height by $\ell(\lambda):=\frac{\lambda_1}{\sqrt{n}}, h(\lambda):=\frac{\lambda'_1}{\sqrt{n}}$, we may extend the rescaled length and height to $\mathcal{CY}$ and hence also to $\mathcal{SY}$:

\begin{defn}
For $f \in \mathcal{CY}$, define its (possibly infinite) \textbf{length} and \textbf{height} as
$$\ell(f)=-\inf(\{x|f(x)=-x\}),$$
$$h(f)=\sup(\{x|f(x)=x\}).$$

\end{defn}

\begin{defn}

For $F\in\mathcal{SY}$, let $\ell(F)=\inf(\{x|F(x)=0\})$.

\end{defn}

\begin{defn}

For $F\in\mathcal{SY}$, let $h(F)=\lim_{t\to 0^+} F(t)$.

\end{defn}

Note that these definitions respect the bijection $S$ because of the area difference between the types of continuous Young diagrams.

We define $C_f$ to be the region between $f \in \mathcal{CY}$ and the graph of $y=|x|$, and $S_F$ the corresponding notion for $F\in\mathcal{SY}$.

\begin{defn}

For $f\in\mathcal{CY}$, let $C_f=\{(x,y)|f(x)\geq y\geq |x|\}$.

\end{defn}

\begin{defn}

For $F\in\mathcal{SY}$, let $S_F=\{(x,y)\in(\mathbb{R}^+)^2|y\leq F(x)\}$.

\end{defn}

We now extend the dominance ordering to continuous Young diagrams.

\begin{defn}

For $F \in\mathcal{SY}$, define $H_F(t)=\int_0^{t} F(x) dx $.

\end{defn}

\begin{defn}
\label{defn:contdomin}
For $F_1, F_2 \in \mathcal{SY}$ we say that $F_1 \succeq F_2$ if they satisfy

$$H_{F_1}(t) \leq H_{F_2}(t)$$
for all $t$.
\end{defn}

\begin{defn}

For $f_1, f_2 \in\mathcal{CY}$ we say that $f_1\succeq f_2$ if $S(f_1)\succeq S(f_2)$.

\end{defn}

It is clear that these dominance orders are extensions of the ordinary dominance order. That is, if $\lambda,\mu\vdash n$ then $\lambda \succeq \mu$ if and only if $f_{\lambda}\succeq f_{\mu}$.

We also define $\varrho_{\infty}$.

\begin{defn}

Let $\varrho_{\infty}$ be the continuous Young diagram which is an isosceles right triangle, so \[\lim_{k\rightarrow\infty}\varrho_k=\varrho_{\infty}.\]
\end{defn}

We now establish a few simple lemmas on the geometry of continuous Young diagrams. 

\begin{defn}
For $A$ a subset of $\mathbb{R}^2$, let $A^{\varepsilon}=\{x\in\mathbb{R}^2|d(x,A)\leq \varepsilon\}$ and $A_{\varepsilon}=((A^{C})^{\varepsilon})^C$, where $A^C$ is the complement of $A$.
\end{defn}

\begin{defn}

For $A\subseteq \mathbb{R}^2$, we denote by $m(A)$ the Lebesgue measure of $A$ (assuming it exists).

\end{defn}

\begin{lem}
\label{lem:semicont}

The functions $\ell, h \colon \mathcal{CY}\to \mathbb{R}$ are lower semi-continuous with respect to the norm $d$. 

\end{lem}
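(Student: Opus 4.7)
The plan is to show lower semi-continuity of $\ell$ (and, symmetrically, $h$) by verifying that each sublevel set $\{f \in \mathcal{CY} : \ell(f) \leq c\}$ is closed in the sup-norm topology, which is the standard reformulation of lower semi-continuity.

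\textbf{Structural lemma.} First I would establish that for every $f \in \mathcal{CY}$, the set $\{x \in \mathbb{R} : f(x) = -x\}$ is a closed left half-line of the form $(-\infty, -\ell(f)]$ (empty when $\ell(f) = +\infty$), and symmetrically that $\{x : f(x) = x\} = [h(f), +\infty)$. This is a one-line Lipschitz squeeze: if $f(x_0) = -x_0$, then $-x_0 = f(x_0) \geq |x_0|$ forces $x_0 \leq 0$, and for any $x \leq x_0$ the two-sided bound $-x = |x| \leq f(x) \leq f(x_0) + |x - x_0| = -x_0 + (x_0 - x) = -x$ collapses to $f(x) = -x$. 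Thus the set $\{x : f(x) = -x\}$ is closed under taking smaller $x$, and the symmetric statement for $\{x : f(x) = x\}$ follows identically.

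\textbf{Sublevel set closedness.} With the structural lemma in hand, suppose $f_n \to f$ uniformly with $\ell(f_n) \leq c$ for every $n$. The lemma gives $f_n(-c) = c$ for each $n$ (since $-c$ lies inside the left half-line $(-\infty, -\ell(f_n)]$), and uniform convergence transfers this to $f(-c) = c$, placing $-c$ in $\{x : f(x) = -x\}$ and hence forcing $\ell(f) \leq c$. The argument for $h$ is verbatim identical, testing at $x = +c$ using the right half-line characterization.

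I do not anticipate any serious obstacle here: the entire content of the proof is concentrated in the short Lipschitz squeeze, after which the closedness of the sublevel sets is a formality from the definition of uniform convergence. If any subtlety arises it will be in the bookkeeping for the case $\ell(f) = +\infty$ (which is vacuous) and for boundary values of $c$, but neither presents real difficulty.
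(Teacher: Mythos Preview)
Your proof is correct. The paper takes a marginally more direct route: instead of showing that sublevel sets $\{\ell \le c\}$ are closed, it shows that superlevel sets $\{h > A\}$ are open by choosing a witness point $x > A$ where $f(x) - |x| = c > 0$ \emph{strictly}, so that any $g$ with $\|f-g\|_\infty < c$ still has $g(x) > |x|$ and hence $h(g) > A$. Because the witness inequality is strict, it is automatically stable under small perturbations, and no structural lemma is required. Your version instead tests at a point where $f_n$ \emph{equals} $|x|$, which is why you first need the Lipschitz squeeze to pin down that $\{x : f(x) = -x\}$ is a half-line; this gives you a clean explicit description of that set, but it is extra work for the lemma at hand.
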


\begin{proof}

We show the result for $h$, the other case being exactly the same. Suppose $h(f) > A$. We show that for $g$ sufficiently close to $f$, we have $h(g) >A$. Indeed, for some $x>A$ we have $f(x)-|x|=c>0$. For $|f-g| < c$, we have $g(x)-|x|>0$, implying $h(g)>A$ as desired. 

\end{proof}

\begin{lem}
\label{lem:convmeas}
For fixed $F\in\mathcal{SF}$, if $S_F$ is a bounded subset of the plane then we have 

$$ \left(\lim_{\varepsilon\to 0^+}{m((S_F)^{\varepsilon})}\right) = \left( \lim_{\varepsilon\to 0^+}{m((S_F)_{\varepsilon})}\right) =m(S_F)=1. $$

\end{lem}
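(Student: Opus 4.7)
The plan is to reduce both limit statements to the single claim $m(\partial S_F) = 0$, and to establish that claim by exploiting the fact that $F$ is monotone non-increasing. The value $m(S_F) = 1$ is immediate from the definition of $\mathcal{SY}$, since $m(S_F) = \int_0^\infty F(x)\,dx = 1$.

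For the limits themselves, observe that as $\varepsilon \downarrow 0$, the family $(S_F)^\varepsilon$ is decreasing with intersection $\overline{S_F}$, while $(S_F)_\varepsilon = \{x : d(x, S_F^C) > \varepsilon\}$ is increasing with union $\mathrm{int}(S_F)$. Because $S_F$ is bounded, $(S_F)^1$ lies in a bounded region of the plane and hence has finite Lebesgue measure, so continuity of measure yields
\[
\lim_{\varepsilon \to 0^+} m((S_F)^\varepsilon) = m(\overline{S_F}), \qquad \lim_{\varepsilon \to 0^+} m((S_F)_\varepsilon) = m(\mathrm{int}(S_F)).
\]
Since $\mathrm{int}(S_F) \subseteq S_F \subseteq \overline{S_F}$, both of these equal $m(S_F) = 1$ as soon as we show $m(\partial S_F) = 0$.

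To establish $m(\partial S_F) = 0$, I would decompose $\partial S_F$ into three pieces: a segment of the $x$-axis (bounded above by $\ell(F)$), a segment of the $y$-axis (bounded above by $h(F)$), and the \emph{upper boundary} consisting of points $(x, F(x))$ at continuity points of $F$ together with the vertical segments $\{x\} \times [F(x), F(x^-)]$ at jump points. The two axis pieces lie on one-dimensional lines and so have planar measure zero. Since $F$ is monotone, it has at most countably many discontinuities, so the union of the corresponding vertical segments is a countable union of line segments and has measure zero. Finally, the graph of $F$ restricted to its continuity points has two-dimensional measure zero by a standard approximation of a monotone function by step functions (or equivalently by Fubini, since each horizontal slice $\{x : F(x) = y\}$ is a single point or an interval on which $y$ is attained for only that one $y$).

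Combining these observations gives $m(\overline{S_F}) = m(\mathrm{int}(S_F)) = m(S_F) = 1$, as desired. The only step requiring any care is the decomposition of the upper boundary and the bookkeeping for jump discontinuities; everything else is a routine application of continuity of measure, enabled by the boundedness hypothesis on $S_F$.
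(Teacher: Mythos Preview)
Your argument is correct and follows essentially the same route as the paper: both reduce the two limits, via continuity of measure (using boundedness of $S_F$), to the equality $m(\overline{S_F}) = m(\mathrm{int}(S_F))$, and then establish $m(\partial S_F)=0$ by decomposing the boundary into the graph of $F$ together with countably many vertical segments at the jump discontinuities. Your version is in fact slightly more careful than the paper's, since you explicitly include the axis segments in the boundary decomposition (the paper's description of $\overline{S_F}\setminus\overset{\circ}{S_F}$ omits these, though they are of course also null).
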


\begin{proof}

It is clear that we have $m(S_F)=1$. We proceed with the remaining claims.

We note that the closure $\overline{S_F}$ and the interior $\overset{\circ}{S_F}$ have equal measure. Indeed, $\overline{S_F}-(\overset{\circ}{S_F})$ consists of a countable number of vertical lines (corresponding to the discontinuities of $F$) and the graph of $F$, each of which has measure 0 by Fubini's theorem. Now, as $\varepsilon$ approaches $0$, the sets $(S_F)^{\epsilon}$ intersect to $\overline{S_F}$. Since $S_F$ is bounded, these sets converge in measure to $\overline{S_F}$ by the dominated convergence theorem. Similarly, as $\varepsilon$ approaches $0$ the sets $(S_F)_{\epsilon}$ converge in measure to $\overset{\circ}{S_F}$. Since we have $m(\overline{S_F})=m(S_F)=m(\overset{\circ}{S_F})$, the lemma follows.

\end{proof}

\begin{lem}
\label{lem:shapedomin}
Given $f_1, f_2 \in \mathcal{CY}$, assume that $f_1(x)=f_2(x)>|x|$ has at most 1 solution in $x$ and that $\ell(f_1)>\ell(f_2),$ $h(f_1) < h(f_2)$. Then $f_1(x)=f_2(x)>|x|$ has a unique solution and $f_1 \succeq f_2$. Moreover, there exists $\varepsilon>0$ such that the following holds: for all continuous Young diagrams $g_1, g_2$ with $||g_i-f_i||_{\infty} \leq \varepsilon$ for $i \in \{1,2\}$ and also $|\ell(f_2)-\ell(g_2)| \leq \varepsilon$ and $|h(f_1)-h(g_1)| \leq \varepsilon$, we have $g_1\succeq g_2$.

\end{lem}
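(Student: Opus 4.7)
I would prove the lemma in three parts: existence and uniqueness of the above-diagonal crossing, the static dominance $f_1 \succeq f_2$, and perturbation stability.

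For the first two parts, the key observation is that for any $f \in \mathcal{CY}$ the Lipschitz-$1$ condition forces $f = |x|$ on any interval between two diagonal-touching points of the same sign; consequently the support $A_f = \{f > |x|\}$ is a single open interval $(a, b)$ with $a < 0 < b$, and $\ell(f) = -a$, $h(f) = b$. Writing $A_{f_i} = (a_i, b_i)$, the hypotheses give $a_1 < a_2 < 0 < b_1 < b_2$, so $f_1 > |x| = f_2$ on $(a_1, a_2)$ and $f_2 > |x| = f_1$ on $(b_1, b_2)$; the intermediate value theorem applied to $f_1 - f_2$ on $[a_2, b_1]$ then produces a crossing above the diagonal, which together with the at-most-one hypothesis is the unique one. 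Transporting to French coordinates via $S$, this single crossing corresponds to a unique sign change of $F_1 - F_2$, from negative to positive: the negative-near-zero portion is forced by $h(f_1) < h(f_2) \Leftrightarrow F_1(0^+) < F_2(0^+)$, and the positive-near-end portion by $\ell(f_1) > \ell(f_2)$ giving $F_1 > 0 = F_2$ on $(\ell(f_2), \ell(f_1))$. Hence $\phi(t) := H_{F_1}(t) - H_{F_2}(t)$ satisfies $\phi(0) = \phi(\infty) = 0$ with $\phi' = F_1 - F_2$ changing sign once from negative to positive, so $\phi$ strictly decreases then strictly increases back to $0$; thus $\phi \leq 0$, which is $f_1 \succeq f_2$ by Definition~\ref{defn:contdomin}. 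The crucial byproduct is that $\phi$ is uniformly bounded away from $0$ on every compact sub-interval of $(0, \ell(f_1))$.

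For perturbation stability, the at-most-one-crossing hypothesis does not transfer to $(g_1, g_2)$, so I would instead bound $\Delta H(t) := H_{G_2}(t) - H_{G_1}(t) \geq 0$ directly by splitting $[0, \infty)$ into three regions. The sup-norm bound $\|g_i - f_i\|_\infty \leq \varepsilon$ passes through $S$ to a L\'evy bound on $G_i - F_i$ and hence to $|H_{G_i}(t) - H_{F_i}(t)| \leq C\varepsilon$ uniformly in $t$. Fixing $\sigma$ small compared to both $\ell(f_1) - \ell(f_2)$ and $h(f_2) - h(f_1)$: on the compact middle $[\sigma, \ell(f_1) - \sigma]$ the strict gap from the static part dominates $2C\varepsilon$ for $\varepsilon$ small. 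On the right tail $[\ell(f_1) - \sigma, \infty)$, the direct bound $|\ell(g_2) - \ell(f_2)| \leq \varepsilon$ combined with lower semicontinuity of $\ell$ applied to $g_1$ (Lemma~\ref{lem:semicont}) gives $\ell(g_2) < \ell(f_1) - \sigma \leq \ell(g_1)$, so $H_{G_2}(t) \equiv 1 \geq H_{G_1}(t)$ throughout this region. On the initial segment $[0, \sigma]$, the direct bound $|h(g_1) - h(f_1)| \leq \varepsilon$ combined with lower semicontinuity of $h$ applied to $g_2$ produces the gap $G_2(0^+) - G_1(0^+) \geq (h(f_2) - h(f_1)) - O(\varepsilon) > 0$; a quantitative L\'evy estimate controlling how fast $G_2$ can drop from $h(f_2)$ (and $G_1$ can rise above $h(f_1)$) then yields $G_2 \geq G_1$ pointwise on $[0, \sigma]$, and hence $\Delta H \geq 0$ there.

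The main obstacle is the initial-segment analysis. The functions $G_i \in \mathcal{SY}$ need not be continuous---they can jump wherever the Russian slope of $f_i$ reaches $1$---so one cannot propagate the gap $G_2(0^+) > G_1(0^+)$ by continuity alone, and the L\'evy bound must be used quantitatively to control how rapidly $G_2$ can fall and $G_1$ can rise on $[0, \sigma]$. The asymmetric choice of one-sided hypotheses in the lemma---controlling $h(g_1)$ and $\ell(g_2)$ directly while leaving $h(g_2)$ and $\ell(g_1)$ to lower semicontinuity---is precisely what supplies quantitative two-sided control at both endpoints and allows the three regional estimates to patch cleanly.
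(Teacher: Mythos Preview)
Your proposal is correct and follows essentially the same architecture as the paper's proof: intermediate value theorem for the existence of the crossing, a single sign change of $F_1-F_2$ in French coordinates to obtain $H_{F_1}\le H_{F_2}$, and a three-region split (initial segment via control of $h$, right tail via control of $\ell$, compact middle via a uniform strict gap) for the perturbation statement. The only notable difference is in the middle region: the paper bounds $|J(t)-I(t)|$ through a measure-convergence argument using $(S_{F_2})_{\varepsilon\sqrt 2}\subseteq S_{G_2}$ and Lemma~\ref{lem:convmeas}, whereas you invoke the direct continuity of $F\mapsto H_F$ (the content of Lemma~\ref{lem:shapearea}); your claimed linear bound $|H_{G_i}-H_{F_i}|\le C\varepsilon$ is stronger than what that lemma actually provides, but the qualitative smallness it does give is all you need here.
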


\begin{proof}[Proof of Lemma~\ref{lem:shapedomin}]

The following pictures will probably be helpful aids for understanding the proof. They correspond to Russian and French coordinates, respectively, with the blue curve as $f_1$ and $ F_1=S(f_1)$ and the green line as $f_2$ and $ F_2=S(f_2)$.

\begin{figure}[ht]
\centering
\begin{minipage}[b]{0.5\linewidth}
  \includegraphics[scale=.5]{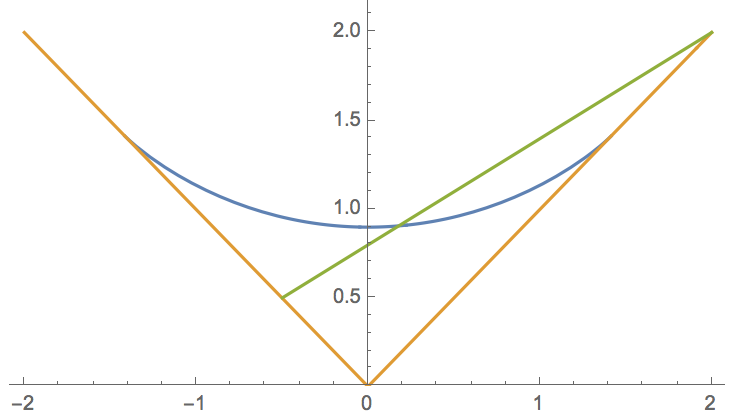}
\caption{Continuous Young Diagrams in English Coordinates}
\label{fig:ContYoungRuss}
\end{minipage}
\quad
\begin{minipage}[b]{0.45\linewidth}
\includegraphics[scale=.5]{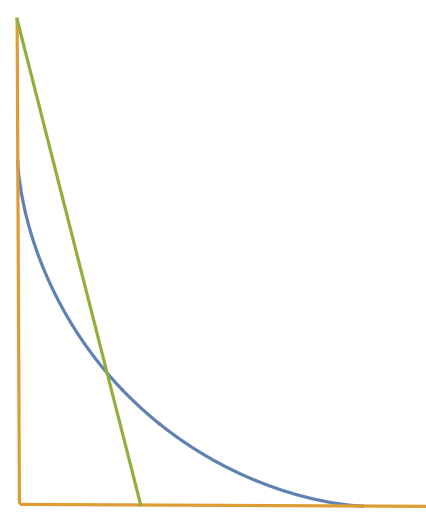}
\caption{Continuous Young Diagrams in Russian Coordinates}
\label{fig:ContYoungFrench}
\end{minipage}
\end{figure}

First we establish that 

$$f_1(x)=f_2(x)>|x|$$
has a solution. This is simple: we have $f_1(-\ell(f_2)) > f_2(-\ell(f_2))=\ell(f_2)$ and $f_2(h(f_1))>f_1(h(f_1))=h(f_1))$, so we may apply the intermediate value theorem to find a real number $c \in (-\ell(f_2),h(f_1))$ for which $f_1(c)=f_2(c)$. By the Lipschitz condition on $\mathcal{CY}$, $c$ clearly satisfies $f_1(c)=f_2(c)>|c|.$

Consider the continuous function $f_3(x)=f_1(x)-f_2(x)$. We have that the equation  

$$f_1(x)=f_2(x)>|x|$$
has a unique solution $c$, and also that $\{x|f_1(x)=f_2(x)=|x|\}=(-\infty,-\ell(f_1))\cup (h(f_2),\infty).$ Therefore, $f_3^{-1}(0)=(-\infty,-\ell(f_1))\cup \{c\} \cup (h(f_2),\infty)$. We have again that $f_1(-\ell(f_2)) > f_2(-\ell(f_2))=\ell(f_2)$ and $f_2(h(f_1))\geq f_1(h(f_1))=h(f_1)$, which yield $f_3(-\ell(f_2))\geq 0$ and $f_3(h(f_1))\leq 0$. By continuity, we therefore see that $f_3(x)>0$ for $x\in  (-\ell(f_1),c)$ and $f_3(x)<0$ for $x\in (c,h(f_2))$.

Let us straighten this picture into French coordinates, with $F_1=S(f_1), F_2=S(f_2)$ and $(c,f_1(c))$ becoming $(k, F_1(k))$. Then our last deduction translates into the fact that $F_2-F_1$ is positive on $(0,k)$, 0 at $k$, negative on $[k,\ell(f_1))$ and 0 on $[\ell(f_1),\infty)$. Since both functions have integral 1, is it clear to see that the function $I(t)=H_{F_2}(t)-H_{F_1}(t)$ is 0 at 0, positive on $(0,\ell(f_1))$ and 0 on $[\ell(f_1),\infty)$. Therefore $f_1\succeq f_2$. 

We now establish $g_1\succeq g_2$ for $g_i$ satisfying the conditions in the lemma statement for small $\varepsilon$. Define $G_1, G_2 \in \mathcal{SY}$ as $S(g_1),S(g_2)$, and $J(t)=H_{G_2}(t)-H_{G_1}(t)$. We show $J(t)\geq 0$ for all $t$, when $\varepsilon$ is small enough. First, the condition $|\ell(f_2)-\ell(g_2)| \leq \varepsilon$ combined with $\ref{lem:semicont}$ guarantees that $J(t)$ is positive near $\ell(f_1)$, because for small enough $\varepsilon$ we have $\ell(G_1)>\ell(G_2)$. Similarly, the condition $|h(f_1)-h(g_1)| \leq \varepsilon$ combined with $\ref{lem:semicont}$ ensures that the initial ``columns" of $G_2$ are larger than than those of $G_1$, i.e. that $G_2(x)-G_1(x)>0$ for $x$ small. So for $\varepsilon$ small, $J(t)$ is positive near 0. 

Now we need only show that $J(t)\geq 0$ for $t\in [\alpha,\ell(f_1)-\alpha]$ for some $\alpha > 0$. Note that because $I(t)>0$ on $[\alpha,\ell(f_1)-\alpha]$, by compactness there exists $\delta > 0$ with $I(t) > \delta$ for $t \in [\alpha,\ell(F_1)-\alpha]$. 

Now, we claim that $(S_{F_2})_{\varepsilon \sqrt{2}} \subseteq S_{G_2}$. Indeed, if $(x,y) \in (S_{F_2})_{\varepsilon \sqrt{2}}$
then 

$$(x+\varepsilon,y+\varepsilon)\in S_{F_2} \iff y+\varepsilon \leq F_2(x+\varepsilon) \implies y \leq G_2(x),$$ 
as desired. Therefore, by Lemma $\ref{lem:convmeas}$, picking $\varepsilon$ small forces $S_{F_2}, S_{G_2}$ to be arbitrarily close in measure. We have 

$$|J(t)-I(t)| \leq m((S_{F_2}\Delta S_{G_2}) \cap ([0,t]\times \mathbb{R})) \leq m(S_{F_2}\cap S_{G_2}), $$
so by picking $\varepsilon$ small we force $|J(t)-I(t)|$ to be uniformly small. Since $I(t)$ is bounded away from $0$ on the interval $[\alpha,\ell(F_1)-\alpha]$, for small $\varepsilon$ we have $J(t)>0$ on that interval. This shows we can force $J(t)>0$ for all $t\in\mathbb{R}$, concluding the proof.

\end{proof}

\begin{lem}
\label{lem:shapearea}

For $F \in \mathcal{SY}$ let $H_F(t)=\int_{0}^t F(t) dt$. For every $\varepsilon > 0$ there exists $\delta$ such the the following holds: if $G \in \mathcal{SY}$ satisfies $d(F,G) \leq \delta$ then $|H_F-H_G|_{\infty} \leq \epsilon$. 

\end{lem}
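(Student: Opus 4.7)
The plan is to translate the estimate on $|H_F - H_G|_\infty$ into a planar measure estimate on the subgraphs $S_F, S_G \subset (\mathbb{R}^+)^2$ and then recycle the geometric containment established in the proof of Lemma~\ref{lem:shapedomin}. First I would observe that $H_F(t) = m(S_F \cap ((0,t]\times[0,\infty)))$, which gives the uniform bound
\[
|H_F(t) - H_G(t)| \leq m(S_F \triangle S_G) \qquad \text{for all } t \geq 0.
\]
Since $\int F = \int G = 1$ gives $m(S_F) = m(S_G) = 1$, one has $m(S_F \setminus S_G) = m(S_G \setminus S_F)$ and hence $m(S_F \triangle S_G) = 2\, m(S_F \setminus S_G)$. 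Thus the task reduces to making $m(S_F \setminus S_G)$ smaller than $\varepsilon/2$.

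Next I would reuse the geometric implication from the end of the proof of Lemma~\ref{lem:shapedomin}: the hypothesis $d(F,G) \leq \delta$ forces the containment $(S_F)_{\delta\sqrt{2}} \subseteq S_G$. Therefore $S_F \setminus S_G \subseteq S_F \setminus (S_F)_{\delta\sqrt{2}}$, and the whole problem collapses to the single convergence
\[
\lim_{\delta \to 0^+} m\bigl((S_F)_{\delta\sqrt{2}}\bigr) = m(S_F) = 1.
\]

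When $S_F$ is bounded this is exactly Lemma~\ref{lem:convmeas}. The main obstacle is the possibility that $F$ blows up near $0$ or decays slowly, so that $S_F$ is unbounded. I would handle this directly: as $\delta \to 0^+$ the nested family $(S_F)_{\delta\sqrt{2}}$ increases to the interior $\overset{\circ}{S_F}$, and $\partial S_F$ is contained in the graph of $F$ together with the countably many vertical segments arising at jumps of $F$ and portions of the coordinate axes, each of two-dimensional Lebesgue measure zero. Hence $m(\overset{\circ}{S_F}) = m(S_F) = 1$, and continuity of measure from below supplies the required convergence. Alternatively one could truncate $S_F$ to a bounded rectangle $R$ containing all but arbitrarily little of its mass (possible because $m(S_F) = 1 < \infty$) and then invoke Lemma~\ref{lem:convmeas} verbatim on the bounded piece, absorbing the tail outside $R$ into the error budget.

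With this in hand, one picks $\delta$ small enough that $m((S_F)_{\delta\sqrt{2}}) > 1 - \varepsilon/2$, whereupon $m(S_F \triangle S_G) < \varepsilon$ and therefore $|H_F - H_G|_\infty < \varepsilon$ for every $G \in \mathcal{SY}$ with $d(F,G) \leq \delta$.
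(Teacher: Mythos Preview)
Your argument is correct, but it is genuinely different from the paper's. The paper proceeds directly on the real line: it chooses a bounded interval $[a,b]$ carrying all but $\varepsilon/4$ of the mass of $F$, observes that $d(F,G)\le\delta$ small forces $|F(x)-G(x)|\le \frac{\varepsilon}{4(b-a)}$ on $[a,b]$, and then bounds $|H_F(t)-H_G(t)|$ by splitting into $t\in[a,b]$ (where the pointwise bound controls the integral) and $t\notin[a,b]$ (where the constraint $\int F=\int G=1$ absorbs the tails). Your approach instead lifts everything to two-dimensional measure: you bound $|H_F-H_G|_\infty$ by $m(S_F\triangle S_G)$, invoke the erosion containment $(S_F)_{\delta\sqrt2}\subseteq S_G$ already used inside Lemma~\ref{lem:shapedomin}, and reduce to the convergence $m((S_F)_\eta)\to m(S_F)$, which is Lemma~\ref{lem:convmeas} in the bounded case and follows from monotone convergence to the interior in general. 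The paper's route is more self-contained and avoids re-invoking the geometry of $\mathcal{SY}$; yours has the advantage of recycling machinery already in place and of handling the possibly unbounded $S_F$ in one clean step rather than via an ad hoc truncation. Both ultimately exploit the same two facts---that $F$ and $G$ have the same total mass, and that $d$-closeness pins $S_G$ near $S_F$---just packaged differently.
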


\begin{proof}[Proof of Lemma~\ref{lem:shapearea}]

Fix $\varepsilon$ as in the statement. By exhaustion take $a,$ $b$ such that $\int_a^b F(t) dt \geq 1-\frac{\varepsilon}{4}$. Now pick $\delta$ such that if $d(F,G)\leq\delta$ then $|F(x)-G(x)| \leq \frac{\varepsilon}{4(b-a)}$ for $x \in [a,b]$. Then $\int_a^b G(t) dt \geq 1-\frac{\varepsilon}{2}$ and so $\int_0^a G(t) dt \leq \frac{\epsilon}{2}, 
\int_b^{\infty} G(t) dt \leq \frac{\epsilon}{2}$ because $\int_0^{\infty} G(t) dt =1$. Therefore, for $x \not\in [a,b]$ the desired 
$|H_F(x)-H_G(x)| \leq \epsilon$ holds. For $x \in [a,b]$ we have $|H_F(x)-H_G(x)| \leq |H_F(a)-H_G(a)|+|\int_a^x F(x)-G(x)| \leq \frac{\varepsilon}{2}+\frac{\varepsilon}{2}= \epsilon$.

\end{proof}

\subsection{Overview of Proofs for Probabilistic Results}

For both $U_n$ and $M_n$, we know now the overall shape of a generic partition $\lambda \vdash n$. Our strategy is the following: we will decompose these partitions into pieces, handle each piece using Theorem~\ref{thm:dominance} on dominance, and combine these pieces using the semigroup property. We will need a way to combine smaller staircases into larger ones. For this section, we will use the following identities.

\begin{prop}
We have the identities
$$(\varrho_{k} \Hplus \varrho_{k-1}) \Vplus (\varrho_k \Hplus \varrho_k) = \varrho_{2k}, \hspace{0.5cm} (\varrho_{k+1} \Hplus \varrho_{k}) \Vplus (\varrho_k \Hplus \varrho_k) = \varrho_{2k+1}.  $$
\end{prop}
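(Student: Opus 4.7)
The plan is a direct computation that unpacks the definitions of horizontal and vertical sums in terms of row and column multisets.

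My first step would be to establish a convenient reformulation: the horizontal sum $\alpha \Hplus \beta$ combines row lengths componentwise (by definition), while the vertical sum $\alpha \Vplus \beta$ combines the multisets of row lengths (then sorts to form a partition). The latter follows from the definition $\alpha \Vplus \beta = (\alpha' \Hplus \beta')'$ together with the fact that horizontal sum combines the multisets of column lengths (which the excerpt notes just after introducing $\Hplus$). Dually, vertical sum combines the multisets of row lengths. This is the only conceptual content in the proof; everything else is bookkeeping.

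Next I would compute the two horizontal sums in the first identity. Since $\varrho_k = (k, k-1, \dots, 1)$ has $k$ rows and $\varrho_{k-1} = (k-1, k-2, \dots, 1, 0)$ (padding with a zero) has the same number of rows, we get
\[
\varrho_k \Hplus \varrho_{k-1} = (2k-1, 2k-3, \dots, 3, 1),
\]
the multiset of odd integers in $[1, 2k-1]$. Similarly,
\[
\varrho_k \Hplus \varrho_k = (2k, 2k-2, \dots, 4, 2),
\]
the multiset of even integers in $[2, 2k]$. Combining these two multisets of row lengths via the vertical sum and sorting in decreasing order gives $(2k, 2k-1, 2k-2, \dots, 2, 1) = \varrho_{2k}$, establishing the first identity.

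The second identity is handled identically: $\varrho_{k+1} \Hplus \varrho_k$ has row lengths equal to the odd integers in $[1, 2k+1]$ (now $k+1$ of them, with the last row coming from the trailing $1$ of $\varrho_{k+1}$ and a padding $0$ from $\varrho_k$), and $\varrho_k \Hplus \varrho_k$ again gives the even integers in $[2, 2k]$; their vertical sum merges these into the full sequence $(2k+1, 2k, \dots, 1) = \varrho_{2k+1}$. The only potential obstacle is the (trivial) row-padding bookkeeping when the two partitions being horizontally summed have different numbers of rows, which I would flag explicitly but not belabor.
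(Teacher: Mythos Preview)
Your proof is correct and follows essentially the same approach as the paper. The paper does not give a standalone proof of this proposition (it offers only pictures for $k=2$), but later proves the general identity $\varrho_n=\vsum_{j=0}^{k-1}\hsum_{i=0}^{k-1}\varrho_{\lfloor (n+i-j)/k\rfloor}$ by exactly your method: compute the row lengths of each inner horizontal sum (they use Hermite's identity where you just add directly), observe they form an arithmetic progression with step $k$, and then merge row multisets via the vertical sum to recover $\{1,\dots,n\}$; your argument is precisely the $k=2$ specialization of this.
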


Visual depictions for both cases when $k=2$ are below. \\

\begin{figure}[ht]
\centering
\begin{minipage}[b]{0.5\linewidth}
 \includegraphics[scale=0.4]{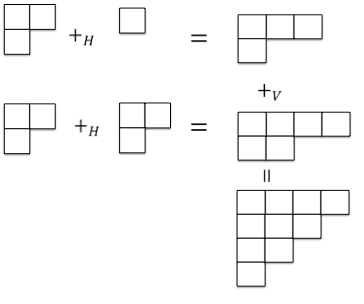}
\caption{Decomposition of $\varrho_4$.}
\label{fig:stairsum1}
\end{minipage}
\quad
\begin{minipage}[b]{0.45\linewidth}
\includegraphics[scale=0.4]{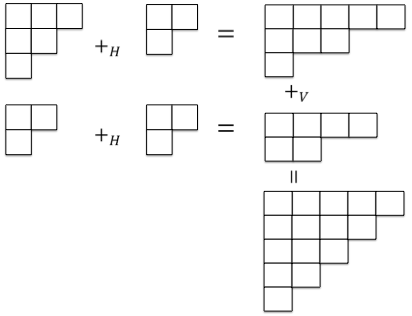}
\caption{Deomposition of $\varrho_5$.}
\label{fig:stairsum2}
\end{minipage}
\end{figure}

This means that we can break a large staircase of size $n$ into four staircases of size roughly $\frac{n}{4}$. Supposing for convenience that $m=2k$ is even, by the above proposition, to show $c(\varrho_m,\varrho_m,\lambda)$ for some $\lambda$, it suffices to write $\lambda$ as $$\lambda=\lambda_1\Hplus\lambda_2\Hplus\lambda_3\Hplus\lambda_4$$
where $c(\varrho_{k-1},\varrho_{k-1},\lambda_1)$ and $c(\varrho_k,\varrho_k,\lambda_i)$ for $i \in \{2,3,4\}$. For in such a case, the semigroup property gives 
$$c(\varrho_{k-1}\Hplus\varrho_k,\varrho_{k-1}\Hplus\varrho_k,\lambda_1\Hplus\lambda_2), \hspace{1cm} c(\varrho_{k}\Hplus\varrho_k,\varrho_{k}\Hplus\varrho_k,\lambda_3\Hplus\lambda_4)$$
$$\implies c((\varrho_{k-1}\Hplus\varrho_k)\Vplus(\varrho_k\Hplus\varrho_k),(\varrho_{k-1}\Hplus\varrho_k)\Vplus(\varrho_k\Hplus\varrho_k),(\lambda_1\Hplus\lambda_2)\Hplus(\lambda_3\Hplus\lambda_4))$$
$$\implies c(\varrho_{2k},\varrho_{2k},\lambda).$$


Perhaps surprisingly, this method suffices to show constituency in $\varrho_m^{\otimes 2}$ for almost all partitions in both the uniform and Plancherel cases. In each case, we can break the limit shape into four pieces of approximately equal area. For the uniform shape, we use vertical cuts as depicted at right. For the Plancherel limit shape, we evenly distribute the columns among the four smaller pieces, so that each smaller piece has approximately the same shape.

In each case, the dominance comparability of the smaller pieces with a staircase partition follows by geometric reasoning on the limit shapes. What is a bit more subtle is ensuring that these decompositions can be done precisely: to apply the semigroup property, each of our four pieces must have size exactly equal to that of a certain staircase partition. For this point, our strategy is to divide almost all of the large partition into our four pieces but reserve some very short columns for the end of the process. We distribute these short columns to the pieces such that each piece has exactly the correct size, without affecting their overall shapes significantly. To justify all of these details requires a bit of care, and we do this in the following section.

\section{Proofs of Probabilistic Results}
\label{sec:probprf}

\subsection{The Uniform Case}

\begin{repthm}{thm:unif}
Let $m\geq 0$. Then $\varrho_m^{\otimes 2}$ contains almost all partitions of $\binom{m+1}{2}$ in the uniform measure, in which all distinct partitions are given the same probability.
\end{repthm}

\begin{proof}

Our plan is to use the semigroup property to add together smaller cases that can be proven by dominance ordering. 

We use Proposition~\ref{thm:stairgrid} in the $k=2$ case, to give the following: Suppose we have partitions $\lambda_1, \lambda_2,\lambda_3,\lambda_4$ which are dominance comparable to $\varrho_{\lfloor \frac{m+1}{2}\rfloor}, \varrho_{\lfloor \frac{m}{2}\rfloor},\varrho_{\lfloor \frac{m}{2}\rfloor},\varrho_{\lfloor \frac{m-1}{2}\rfloor}$, respectively. Repeated application of the semigroup property yields 
$$c\left(\varrho_{{\lfloor \frac{m+1}{2}\rfloor}}\Hplus \varrho_{\lfloor \frac{m}{2}\rfloor},\varrho_{{\lfloor \frac{m+1}{2}\rfloor}}\Hplus\varrho_{\lfloor \frac{m}{2}\rfloor},\lambda_1\Hplus\lambda_2\right), c\left(\varrho_{{\lfloor \frac{m}{2}\rfloor}}\Hplus \varrho_{\lfloor \frac{m-1}{2}\rfloor},\varrho_{{\lfloor \frac{m}{2}\rfloor}}\Hplus\varrho_{\lfloor \frac{m-1}{2}\rfloor},\lambda_3\Hplus\lambda_4\right)$$

and hence  $c(\varrho_{m},\varrho_{m},\lambda_1 \Hplus\lambda_2 \Hplus\lambda_3 \Hplus\lambda_4)$.

It now suffices to take a $U_n$-typical partition $\lambda$ and show that it can be written as
$$\lambda=\lambda_1\Hplus\lambda_2\Hplus\lambda_3\Hplus\lambda_4,$$
where $\lambda_1,$ $ \lambda_2,$ $\lambda_3,$ $\lambda_4$ are dominance-comparable to the correspondingly sized staircases. 

We will do so by partitioning the columns into four sets corresponding to $\lambda_1,\cdots,\lambda_4$ such that each is dominance-comparable to $\varrho_{\lfloor \frac{m+1}{2}\rfloor}$ or $\varrho_{\lfloor \frac{m-1}{2}\rfloor}$. We give an algorithm that works for almost all partitions. On a macroscopic scale, we essentially will split the limit shape into 4 pieces of equal area by cutting it up vertically, as depicted below. We will then use some columns of length 1 to ensure that each $\lambda_i$ has exactly the correct total size, without interfering significantly with their overall shapes. We will denote by $\mu_1,\dots,\mu_4$ the 4 straightened continuous Young diagrams formed from the limit shape in this way. A figure below depicts the decomposition.

\begin{figure}[h]
\label{fig:ColumnParts}
\caption{Decomposition of the Uniform Limit Shape by Column Size}
\begin{center}\includegraphics{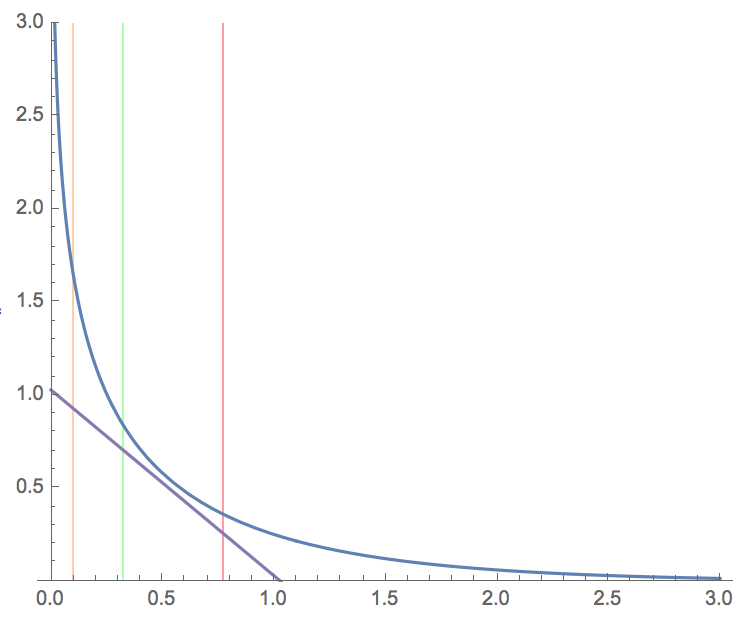}\end{center}
\end{figure}

Here the purple diagonal line represents the rescaled staircase partition $\varrho_{{\lfloor \frac{m}{2}\rfloor}}$, shifted right so that its corner is at the bottom of the green line (the line separating $\mu_2$ from $\mu_3$. We now briefly explain why the hypotheses of Lemma~\ref{lem:shapedomin} hold for each piece against a $\frac{1}{4}$-area $\varrho_{\infty}$. The exact equation for the curve of the limit shape boundary is $$e^{-\frac{\pi}{\sqrt{6}}x}+e^{-\frac{\pi}{\sqrt{6}}y}=1.$$

A convexity argument shows that on this curve, $x+y$ is minimized at $x=y=\frac{\sqrt{6}}{\pi}\log2$. Therefore, to ensure that the purple line does not intersect the curve, we need to verify that the x-coordinate for the green line is less than $(\frac{2\sqrt{6}}{\pi}\log2)-\frac{1}{\sqrt{2}}\approx 0.37$. Indeed, the green x-coordinate can easily be checked to be approximately $0.33$, which is smaller as desired. Therefore, the boundaries of $\varrho_{\infty}$ and $\mu_3$ intersect only once, including at the endpoints, so Lemma~\ref{lem:shapedomin} ensures dominance comparability. It is clear that dominance of $\mu_3$ implies the same for $\mu_2, \mu_1$. For $\mu_4$, it is easy to check that the red line meets the blue boundary curve at approximately $(0.78,0.36)$. Because this height is less than $\frac{1}{\sqrt{2}}\approx 0.71$, $\varrho_{\infty}$ has larger height than $\mu_4$. So it remains to check that the boundary curves of $\mu_4,\varrho_{\infty}$ intersect only at most 1 time. This is true because the boundary of $\mu_4$ consists only of points $(x,y)$ with $x>y$, and it is easily seen that the derivative of the boundary curve lies in the interval $(-1,0)$ on this region, so it cannot intersect a line of slope -1 twice. 

We will now decompose $\lambda$ by greedily partitioning the columns into four sets corresponding to $\lambda_1,\cdots,\lambda_4$ as follows. We fix some small $\varepsilon>0$ to be chosen later, and freely take $n$ to be large. Order the column lengths ${\lambda}_1^t\geq {\lambda}_2^t\geq\cdots$, and let $n_1$ be the largest integer such that $\sum_{i=1}^{n_1}{\lambda}_i^t\leq \binom{k+1}{2}$, and assign to $\lambda_1$ the columns ${\lambda}_1^t \dots {\lambda}_{n_1}^t$. By Lemma $\ref{lem:shapearea}$, the rescaled position of column $n_1$ is, with high probability, very close to the first vertical line in the diagram. Because it is also true w.h.p. that $\lambda$ is close to the limit shape in the metric on $\mathcal{SY}$, it is true w.h.p that no 2 subsequent adjacent columns $\lambda_m^t, \lambda_{m+1}^t$ $(m>n_1)$ differ by more than $\varepsilon \sqrt{n}$. This is simply because to the right of the orange line, the limit shape graph is uniformly continuous. 

Therefore, w.h.p. we may add one more column $\lambda_m^t$ to $\lambda_1$ so that $0 \leq \binom{k+1}{2}-|\lambda_1| \leq \varepsilon \sqrt{n}$, just by taking the largest column size $\lambda_m^t$ preserving the property $0 \leq \binom{k+1}{2}-|\lambda_1|$. 

We similarly form $\lambda_2,\lambda_3$ by greedily adding columns, and then (if necessary) adding one more column to give total size in the interval  $[\binom{k+1}{2}-i\varepsilon\sqrt{n},\binom{k+1}{2}]$ (We have a factor of $i \in \{2,3\}$ because the fact that we used out-of-place columns for previous $\lambda_j$ could increase the gap-size between available columns for subsequent $\lambda_i$.) We then greedily fill $\lambda_4$ with the remaining columns which are not of size 1. Note that $n_2$ and $n_3$ correspond almost exactly to the vertical boundaries between $\mu_i$, again by Lemma~\ref{lem:shapearea}. 

We now use the pieces of size 1 to smooth the exact sizes of $\lambda_i$. This is possible because of the following proposition.

\begin{prop}[{\cite[Thm 2.1]{Unif1s}}]
Let $X_1(\lambda)$ be the number of columns of size $1$ contained in a partition $\lambda$. For each $v\geq 0$, if $\lambda$ is a uniformly random partition of $n$,
$$\lim_{n\rightarrow\infty}P\left(\frac{\pi}{\sqrt{6n}}X_1\leq v\right)=1-e^{-v}.$$
\end{prop}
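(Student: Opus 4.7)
The plan is to reduce the proposition to a direct asymptotic estimate via the Hardy--Ramanujan formula for $p(n)$. The key combinatorial input is a simple bijection: partitions of $n$ with exactly $j$ parts equal to $1$ correspond to partitions of $n-j$ whose smallest part is at least $2$. (If $X_1$ is interpreted instead as the number of length-$1$ columns, the argument is identical after passing to the conjugate partition, since the uniform measure on $P_n$ is conjugation-invariant.) Removing a single $1$ bijects partitions of $m$ containing at least one $1$ with all of $P_{m-1}$, so the number of partitions of $m$ with no $1$'s equals $p(m)-p(m-1)$. Therefore
$$P(X_1 = j) \;=\; \frac{p(n-j)-p(n-j-1)}{p(n)}.$$

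Setting $J := \lfloor v\sqrt{6n}/\pi\rfloor$, the cumulative probability telescopes beautifully:
$$P\!\left(\tfrac{\pi}{\sqrt{6n}} X_1 \leq v\right) \;=\; \sum_{j=0}^{J} \frac{p(n-j)-p(n-j-1)}{p(n)} \;=\; 1 - \frac{p(n-J-1)}{p(n)}.$$
It now suffices to prove that $p(n-J-1)/p(n)\to e^{-v}$. For this I would invoke the Hardy--Ramanujan asymptotic $\log p(n) = \pi\sqrt{2n/3} + O(\log n)$. Expanding $\sqrt{n-J-1}$ to first order around $\sqrt n$,
$$\log\frac{p(n-J-1)}{p(n)} \;=\; \pi\sqrt{\tfrac{2}{3}}\bigl(\sqrt{n-J-1}-\sqrt{n}\bigr) + O\!\left(\tfrac{\log n}{\sqrt n}\right) \;=\; -\frac{\pi(J+1)}{\sqrt{6n}} + o(1),$$
which tends to $-v$. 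Exponentiating yields the claimed limit.

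The only point requiring any care is uniformity: the second-order term $O(J^2/n^{3/2})$ in the expansion of $\sqrt{n-J-1}$ must be $o(1)$, which holds because $J = O(\sqrt n)$, and the Hardy--Ramanujan error $O(\log n)$ in $\log p$ is dominated by the main term $\pi\sqrt{2n/3}$. The scaling constant $\pi/\sqrt{6n}$ in the statement is precisely tuned so that $\pi\sqrt{2/3}\cdot J/(2\sqrt n)\to v$. I would not expect any serious obstacle beyond citing Hardy--Ramanujan: the proof is essentially a one-line telescoping identity combined with a standard asymptotic.
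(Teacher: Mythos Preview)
Your argument is correct. The telescoping identity $P(X_1\le J)=1-p(n-J-1)/p(n)$ is exact, and the Hardy--Ramanujan asymptotic then gives $\log\bigl(p(n-J-1)/p(n)\bigr)\to -v$ for $J=\lfloor v\sqrt{6n}/\pi\rfloor$; the second-order terms you flag are indeed $o(1)$ since $J=O(\sqrt n)$. The conjugation remark handling ``columns of size~$1$'' versus ``parts equal to~$1$'' is also correct, as the uniform measure on $P_n$ is conjugation-invariant.

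There is, however, nothing to compare against in the present paper: this proposition is not proved here but merely quoted from \cite{Unif1s} (their Theorem~2.1) and used as a black box inside the proof of Theorem~\ref{thm:unif}. So your write-up is not an alternative to the paper's proof but rather a self-contained derivation of a cited result. As such it is a perfectly good elementary argument---arguably the most natural one---and could serve as a replacement for the citation if one wanted the paper to be more self-contained.
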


Because of this proposition, by picking $\varepsilon$ small, we have w.h.p. at least $6\varepsilon\sqrt{n}$ columns of size 1, enough to distribute to $\lambda_1,\lambda_2,\lambda_3$ in order to give them the correct sizes. We checked earlier that the condition for Lemma~\ref{lem:shapedomin} was satisfied for each $\mu_i$, so by also picking $\varepsilon$ small enough so as to use Lemma~\ref{lem:shapedomin}, we also have that dominance is preserved when we do this, so $\lambda_1,\lambda_2,\lambda_3$ are taken care of.

We now just add all remaining parts of size $1$ to $\lambda_4$. Each other $\lambda_i$ is the correct total size, so having used all the columns of $\lambda$, we see that $\lambda_4$ is also the correct size. Because $\mu_4$ dominates the infinite staircase $\varrho_{\infty}$, by Lemma~\ref{lem:shapedomin} again, with $\varepsilon$ sufficiently small the dominance will be preserved. (Note that because $\lambda_4$ dominates the corresponding staircase instead of being dominated as was the case for the other $\lambda_i$, we don't need to worry about adding too many singleton columns to $\lambda_4$.)

\end{proof}

\subsection{The Plancherel Case}

\begin{repthm}{thm:planch}
Let $m\geq 1$. Then $\varrho_m^{\otimes 2}$ contains almost all partitions of $\binom{m+1}{2}$ with respect to the Plancherel measure. 

\end{repthm}

\begin{proof}[Proof of Theorem~\ref{thm:planch}]

For this proof, we assume the following technical result, which will be proven in a later section.

\begin{defn} Let $\beta$ be a positive real number. A partition $\lambda$ of $n$ is called \emph{$\beta$-sum-flexible} if it satisfies the following property: when its column lengths are sorted in increasing order $a_1 \leq \dots a_m$, we have $a_1=1$, and for all $1\leq k \leq m$ we have $a_k \leq \left\lceil\beta \sum\limits_{j=1}^{k} a_j \right\rceil$.
\end{defn}

\begin{thm}
For any $\beta>0$, let $P(n,\beta)$ denote the probability that a (Plancherel) random partition of $n$ is $\beta$-sum flexible. Then we have
\[\lim_{n\rightarrow\infty} P(n,\beta) = 1\] for all $\beta$.
\label{thm:bess}
\end{thm}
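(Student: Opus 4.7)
The plan is to reformulate $\beta$-sum-flexibility as a system of tail inequalities for $\lambda$, then verify these using the Plancherel limit shape together with edge-fluctuation estimates for the top rows. Writing $c_L=\lambda_L-\lambda_{L+1}$ for the number of columns of length exactly $L$ and $S_L=\sum_{i=1}^L i\,c_i$ for the area of columns of length at most $L$, the sorted column lengths have $a_k=L$ precisely when $\sum_{i<L}c_i<k\le\sum_{i\le L}c_i$, and $\sum_{j=1}^k a_j$ on this range is minimized at the left endpoint with value $S_{L-1}+L$. Thus $\beta$-sum-flexibility is equivalent to the two conditions (a) $c_1\ge 1$ (i.e.\ $\lambda_1>\lambda_2$, which forces $a_1=1$) and (b) for every $L\ge 1$ with $c_L\ge 1$, $L\le\lceil\beta(S_{L-1}+L)\rceil$. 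The case $\beta\ge 1$ is automatic, so I assume $\beta\in(0,1)$ and set $C=(1-\beta)/\beta$; then (b) reduces to $S_{L-1}\ge CL$ up to an additive $O_\beta(1)$.

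For the edge regime I fix a constant $L_0=L_0(\beta)$ and exploit the known asymptotics for the top of a Plancherel random partition: the rescaled top rows $((\lambda_i-2\sqrt n)/n^{1/6})_{i\ge 1}$ converge in joint distribution to the top of the Airy point process, whose points are almost surely strictly decreasing. In particular, for any constants $M=M(\beta)$ and $L_0$, with probability tending to $1$ one has $c_i=\lambda_i-\lambda_{i+1}\ge M$ for every $i\le L_0$. Choosing $M$ and $L_0$ large enough in terms of $\beta$ (e.g.\ so that $M(L-1)L/2\ge CL$ for all $L\le L_0$) ensures $S_{L-1}\ge CL$ for every $L\in[2,L_0]$ simultaneously, while $c_1\ge M\ge 1$ supplies condition (a).

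For the bulk regime ($L>L_0$) I use the Plancherel limit shape. Starting from $S_L=\sum_{j=1}^L(\lambda_j-\lambda_{L+1})$ and passing to the rescaling $\ell=L/\sqrt n$, the limit shape gives $S_L/n\to g(\ell)$, where $g(\ell)=\int_0^\ell\psi(t)\,dt-\ell\,\psi(\ell)$ and $\psi$ is the inverse of the French-coordinate limit-shape function $\phi$. One checks that $g$ is strictly positive and increasing on $(0,2]$, with $g(\ell)\sim K\ell^{5/3}$ as $\ell\to 0^+$ (reflecting the tangency of $\phi$ to the horizontal axis at its corner). Combining this with standard concentration of $\lambda$ about its limit shape yields $S_L\gtrsim L^{5/3}n^{1/6}$ for $L$ near the edge and $S_L\gtrsim n$ for $L$ in the bulk, both of which comfortably dominate $CL$ for $L\ge L_0$ and $n$ large; condition (b) then holds throughout the bulk regime.

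The main technical obstacle is producing a concentration bound for $S_L$ that is uniform in $L$ across the intermediate range $L_0\lesssim L\lesssim\eta\sqrt n$, since the limit-shape convergence is only uniform on compact subsets bounded away from the edges. One way to handle this is a union bound over a geometric (dyadic) sequence of scales, combining the limit-shape convergence on each scale with Airy-type edge estimates; another is to exploit monotonicity of $S_L$ in $L$ to reduce uniform control to a finite collection of test values at each scale. Either way, once such a uniform concentration bound is in hand, $\beta$-sum-flexibility follows by combining the two regimes above.
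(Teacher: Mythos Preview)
Your reformulation of $\beta$-sum-flexibility in terms of the tail sums $S_L$ is correct, and your treatment of the two extreme regimes---fixed $L\le L_0$ via convergence of the top rows to the Airy process, and $L\asymp\sqrt n$ via the limit shape---is sound. But the gap you yourself flag in the intermediate range $L_0\lesssim L\ll\sqrt n$ is the heart of the matter: neither the Airy convergence (which controls only the top $O(1)$ rows) nor the bulk limit shape (which is only informative at depth $\Theta(\sqrt n)$) gives any quantitative control on $\lambda_L$ or $S_L$ for $L$ at scale $n^\alpha$ with $0<\alpha<\tfrac12$. Your two proposed remedies do not close this on their own: monotonicity of $S_L$ reduces the problem to a geometric sequence of test scales, but at each such scale you still need a high-probability lower bound on $S_L$ that neither extreme regime supplies; and ``Airy-type edge estimates'' valid for growing $L$ are precisely the missing input, not a tool you already have in hand.

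The paper's proof is built exactly to bridge this range, by a rather different route. It passes to Poissonized Plancherel measure and studies $T(\lambda,w)=|D(\lambda)\cap\mathbb Z_{\ge w}|$, for which the determinantal structure yields explicit Bessel-function formulas. Setting $w=2\sqrt n-Kn^\alpha$ with $\tfrac16<\alpha\le\tfrac12$, the paper proves $\mathbb E[T]=\Omega(n^{\alpha-1/6})$ and $\mathbb E[T]=O(K^2 n^{2\alpha-1/3})$ directly from Bessel bounds, obtains concentration via Chebyshev (the determinantal kernel forces nonpositive pairwise covariances, so $\operatorname{Var}(T)\le\mathbb E[T]$), and depoissonizes by monotonicity under the Plancherel growth process. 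These estimates are then bootstrapped through the specific scales $n^{1/6}\to n^{1/4}\to n^{1/3}\to n^{1/2}$: at each step the upper bound on $T$ forces many horizontal edge-steps (hence many columns) in a given window, while the lower bound from the previous step bounds their heights from below. This is a rigorous implementation of exactly the bridging argument your sketch lacks; your heuristic $S_L\sim L^{5/3}n^{1/6}$ encodes the same information, but turning it into a uniform high-probability statement across all intermediate scales requires input of this kind.
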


To show Theorem~\ref{thm:planch}, we use the same summation identities for staircase partitions as above. Again, for simplicity, we discuss only the $n=2k$ case, the other case being nearly identical. 

Similarly to the uniform case, we will take a $M_n$-typical partition $\lambda$ and write it as $$\lambda=\lambda_1\Hplus\lambda_2\Hplus\lambda_3\Hplus\lambda_4,$$
where $\lambda_1,$ $ \lambda_2,$ $\lambda_3$ are dominance-comparable to $\varrho_k$ and $\lambda_4$ is comparable to $\varrho_{k-1}$. Unlike in the above, we can make each piece roughly the same shape. We will do so by grouping most of the column lengths $\lambda_1'\geq \lambda_2' \geq \dots$ into sets of four consecutive sizes, and dividing them cyclically among $\lambda_i$. We will use Theorem~\ref{thm:bess} to distribute the smallest columns, in such a way that $|\lambda_1|=\lambda_2|=|\lambda_3|=\binom{k+1}{2}$ and $|\lambda_4|=\binom{k}{2}$. Similarly to the uniform case, define $\mu$ to be the Plancherel limit shape, but transformed by the linear map $(x,y)\rightarrow(\frac{x}{2},2y)$. 

We first argue that $\mu, \varrho_{\infty}$ satisfy the hypotheses of Lemma~\ref{lem:shapedomin}. We have $\ell(\varrho_{\infty})=h(\varrho_{\infty})=1, \ell(\mu)=\frac{1}{\sqrt{2}}, h(\mu)=2\sqrt{2}$, so it remains to check that their boundaries intersect only once off of the axes. In Russian coordinates, extend the boundary of $\mu$ along the axes to a convex function $f$ defined on all of $\mathbb{R}$ by setting $f(x)=|x|$ outside the boundary curve of $\mu$. Since $f$ is convex, it can only meet any line twice. The boundary of $\varrho_{\infty}$ is a line, and one intersection point is $(1,1)$. Therefore, the boundaries of $\mu,\varrho_{\infty}$ intersect at most once, so they do meet the conditions of Lemma~\ref{lem:shapedomin}. 

Fix a miniscule $\varepsilon > 0$. We order the column lengths $\lambda_1^t \geq \lambda_2^t \geq \dots $. Take all columns with size at least $\varepsilon n^{1/2}$ and add column $\lambda_i^t$ to partition $k$ if $i$ and $k$ are congruent modulo 4. Because each partition's total size corresponds essentially to a Riemann integral of the limit shape, each partition now has size $n(\frac{1}{4}-\delta+o(1))$, where $\delta$ is the area of the limit shape covered by columns are size less than $\epsilon$ after rescaling \cite{Baik}.
This means that each partial partition is smaller than the size of the corresponding $\lambda_i$, which all have size $n(\frac{1}{4}-o(1))$. By Theorem~\ref{thm:bess}, we may distribute the remaining columns to give each partition the correct overall size. We claim that these remaining small parts occupy an infinitesimal area in the rescaled diagram: they fit into a square of arbitrary small side length, for $\varepsilon$ sufficiently small. To verify this, we need that the longest row of $\lambda$ is of size $(2+o(1))\sqrt{n}$ with high probability, i.e. the same length as predicted by the length of the limit shape. This is indeed true, see \cite{Baik}.

This implies that the resulting shapes are very close to $\mu$ in the metric on $\mathcal{SY}$, meaning that, assuming we picked a sufficiently small $\varepsilon$ value originally, we may apply Lemma~\ref{lem:shapedomin} to conclude that we have dominance.

In summary, we have decomposed a generic large partition as a horizontal sum of 4 almost-equally-shaped partitions, each comparable to their size of staircase via the dominance ordering. Hence, this generic large partition is a constituent of $\varrho_m^{\otimes 2}$, by the semigroup property, and so we have established Theorem~\ref{thm:planch}.

\end{proof}

\section{Overview of the Deterministic Approach}

In this section, we consider a different weakening of the tensor square conjecture:

\begin{ques}
What is the smallest integer $f(m)$ such that $\varrho_m^{\otimes f(m)}$ contains every partition of $n=\frac{m(m+1)}{2}$?

\end{ques}

The conjecture is that $f(m)=2$; we seek here to find any good upper bound on $f(m)$. It may seem natural to attempt to use the semigroup property directly for this question; we can in fact define higher-length Kronecker coefficients via constituency in longer tensor products, and the semigroup property still holds for these longer sequences in the same way (see Appendix~\ref{subsec:rectcube}). However, we take a different approach, by instead allowing for additional factors of the standard representation $\tau_n$, and then replacing these factors with factors of $\varrho_m$ to answer the above question.

\subsection{Blockwise Distances}

The square-moving interpretation of tensoring a representation with $\tau_n$ motivates the following definition:

\begin{defn}
The \textit{blockwise distance} $\Delta(\lambda,\mu)$ between two partitions $\lambda,\mu$ of $n$ is the smallest number of single blocks that need to be moved to form $\lambda$ from $\mu$.
\end{defn}

Equivalently, the blockwise distance is the smallest $\Delta$ such that $\lambda\otimes \tau_n^{\otimes \Delta}$ contains $\mu$. To motivate our approach in this section, consider the graph of Young diagrams of size $n$ in which $\lambda$ and $\mu$ are adjacent if and only if they differ by the movement of exactly 1 block. Then $\lambda\otimes\tau_n$ is the formal sum of $\lambda$ (possibly many times) and all of its neighbors (once each). If we are only concerned (as we are) with which representations appear at all, we see that tensoring with $\tau_n$ corresponds simply to ``spreading out" along this graph. This leads us to another weakening of the tensor square conjecture:

\begin{defn}

Define $H(m)$ to be the minimum non-negative integer $\ell$ such that $\varrho_m^{\otimes 2}\otimes \tau_n^{\otimes \ell}$ contains all partitions of $n$ as constituents, where $n=\frac{m(m+1)}{2}$.

\end{defn}

\begin{ques}

How quickly does $H(m)$ grow with $m$?

\end{ques}

If $H(m)$ is small, then partitions contained in $\varrho_m^{\otimes 2}$ are ``dense" in the graph described above. In this section we find an upper bound for the number of standard representation factors required, and then translate this into a bound $f(m)$. We also define a slightly more general distance which allows differently sized partitions to be compared.

\begin{defn}
The \textit{generalized blockwise distance} $\Delta(\lambda,\mu)$ between two partitions $\lambda\vdash n_1,\mu\vdash n_2$ is the smallest number of single blocks that need to be added, removed, and/or moved to form $\lambda$ from $\mu$.
\end{defn}

\begin{rem} When $\lambda,\mu$ are partitions of the same $n$, the generalized distance $\Delta({\lambda,\mu})$ is the same as the regular blockwise distance.
\end{rem}

First we verify that horizontally adding partitions interacts simply with blockwise distances.

\begin{prop}
\label{prop:distbound}
If $\lambda_1,\mu_1,\lambda_2,\mu_2$ are integer partitions then $\Delta(\lambda_1,\lambda_2)+\Delta(\mu_1,\mu_2)\geq \Delta(\lambda_1\Hplus\mu_1,\lambda_2\Hplus\mu_2)$.
\end{prop}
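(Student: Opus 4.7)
The plan is to exhibit an explicit sequence of single-block operations from $\lambda_1 \Hplus \mu_1$ to $\lambda_2 \Hplus \mu_2$ whose length equals $\Delta(\lambda_1, \lambda_2) + \Delta(\mu_1, \mu_2)$. Writing $a = \Delta(\lambda_1, \lambda_2)$ and $b = \Delta(\mu_1, \mu_2)$, I fix optimal sequences of single-block operations
\[
\lambda_1 = \alpha_0 \to \alpha_1 \to \cdots \to \alpha_a = \lambda_2, \qquad \mu_1 = \beta_0 \to \beta_1 \to \cdots \to \beta_b = \mu_2,
\]
in which each arrow adds, removes, or moves a single block and each intermediate shape is a valid partition.

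The heart of the proof is the following lifting principle: for any partition $Q$, a single-block operation $P \to P'$ between partitions lifts to a single-block operation $P \Hplus Q \to P' \Hplus Q$. The verification is three short row-length checks. For a removal from row $r$ of $P$ (valid when $P_r > P_{r+1}$), removing the rightmost box of row $r$ of $P \Hplus Q$ is legal because $(P_r + Q_r) - (P_{r+1} + Q_{r+1}) \geq 1$, using that $Q$ is non-increasing; the addition case is dual. For a move from row $r$ to row $s$, one simply observes that $P' \Hplus Q$ is automatically a partition (being the horizontal sum of two partitions) and differs from $P \Hplus Q$ by exactly one relocated box, so it is reachable by a single move. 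A symmetric lifting on the right argument of $\Hplus$ holds by the same reasoning.

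Applying the first lifting principle $a$ times produces a length-$a$ sequence from $\lambda_1 \Hplus \mu_1$ to $\lambda_2 \Hplus \mu_1$; applying the second lifting principle $b$ times continues it to $\lambda_2 \Hplus \mu_2$, yielding $\Delta(\lambda_1 \Hplus \mu_1, \lambda_2 \Hplus \mu_2) \leq a + b$ as required. I do not expect a real obstacle; the statement is essentially subadditivity of a combinatorial distance under a monotone merging operation, and the only work is the row-by-row check above. The mildly subtle point is formalizing a ``move'' as a single atomic operation rather than an ordered pair (remove, then add), but either convention is preserved by the lift because the target shape $P' \Hplus Q$ is a genuine partition.
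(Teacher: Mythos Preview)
Your proof is correct and follows essentially the same approach as the paper: lift each single-block operation in the two optimal sequences to an operation on the horizontal sum, then concatenate. The only cosmetic difference is that the paper phrases the lift in terms of columns (horizontal addition is the disjoint union of column multisets, so operations on the $\lambda$-columns and the $\mu$-columns are manifestly independent), whereas you verify the lift via row-length inequalities; both viewpoints work.
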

\begin{proof}
Consider a sequence of $d_1$ operations on the columns of $\lambda_1$ that can be done to result in $\lambda_2$, and consider the analogous sequence for transforming $\mu_1$ to $\mu_2$. Since horizontal addition simply combines the multisets of column sizes, it suffices to show that the same two sequences of moves can be performed separately on the sets of columns of $\lambda_1\Hplus\mu_1$ corresponding to $\lambda_1$ and $\mu_1$, respectively, to achieve the same transformation as before for each of the two parts.

This follows because the basic operations, when performed on the columns of $\lambda_1\Hplus\mu_1$ corresponding to $\lambda_1$, do not affect those corresponding to $\mu_1$, and vice versa; when the relative sizes of two columns would change, we can simply reorder the columns without affecting the result.
\end{proof}

Note that $\Delta({\lambda,\mu})\leq n-1$ for all $\lambda,\mu\vdash n$. As a demonstration of what using standard representations can give us, we give the following pair of weak but instructive results:

\begin{prop}
\label{prop:stdInStair}
Let $n=\frac{m(m+1)}{2}$. If $\lambda\vdash n$ is such that $\lambda$ is contained in $\tau_n^{\otimes m}$, then $\lambda$ is contained in $\varrho_m^{\otimes 2}$.
\end{prop}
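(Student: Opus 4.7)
The plan is to translate the hypothesis into a combinatorial condition and then exhibit $\lambda$ as a constituent of $\varrho_m^{\otimes 2}$ by combining Theorem~\ref{thm:dominance} with the semigroup property. By iterated Pieri's rule, $\lambda \subseteq \tau_n^{\otimes m}$ is equivalent to $\lambda_1 \geq n - m$, since $\tau_n^{\otimes m}$ consists of exactly those partitions reachable from $(n)$ in at most $m$ single-box moves. Writing $r = n - \lambda_1 \leq m$ and $\mu = (\lambda_2, \lambda_3, \dots) \vdash r$, I split on the number of nonzero parts of $\mu$: either $\mu$ has at most $m-1$ parts, or $\mu$ has exactly $m$ parts (the latter forcing $\mu = (1^m)$ because each part is $\geq 1$ and $|\mu| \leq m$).

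In the generic case (at most $m-1$ parts) I will show $\lambda \succeq \varrho_m$ directly, after which Theorem~\ref{thm:dominance} gives the conclusion. The dominance condition at index $j$ rewrites as $\sum_{i \geq j} \mu_i \leq \binom{m-j+1}{2}$, and using that each positive part of $\mu$ is at least $1$ gives the crude bound $\sum_{i \geq j} \mu_i \leq \max(0,\, r - j + 1)$. Substituting $v = m - j + 1$, the desired inequality becomes $v^2 - 3v + 2(m - r) \geq 0$, whose discriminant $9 - 8(m-r)$ is non-positive for $m - r \geq 2$, with the edge case $m - r = 1$ immediate at integer $v \in \{1,2\}$. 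In the subcase $r = m$ the crude bound is not sharp at $j \in \{m-1, m\}$, but the length assumption forces $\mu_m = 0$ and $\mu_{m-1} \leq 1$ (the only partition of $m$ with $m-1$ parts is $(2, 1^{m-2})$), which restores the inequality.

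The remaining residual case is the single partition $\lambda = (\binom{m}{2}, 1^m)$, which is not dominance-comparable to $\varrho_m$. Here I use the row-by-row identity
\[
\varrho_m \;=\; \varrho_{k_0} \Hplus (1^{k_0+1}) \Hplus (1^{k_0+2}) \Hplus \cdots \Hplus (1^m),
\]
where $k_0$ is the unique integer with $\binom{k_0}{2} \leq m < \binom{k_0+1}{2}$. Since $(1^j) \otimes (1^j)$ contains the trivial representation $(j)$ of $S_j$ for every $j$, iterated applications of the semigroup property reduce the claim $c(\varrho_m, \varrho_m, \lambda)$ to $c(\varrho_{k_0}, \varrho_{k_0}, \beta)$, where $\beta := (\binom{k_0+1}{2} - m,\, 1^m) \vdash \binom{k_0+1}{2}$ is a valid partition because $\binom{k_0+1}{2} > m$. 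The dominance $\varrho_{k_0} \succeq \beta$ reduces, after the substitution $s = k_0 - j + 1$, to the quadratic inequality $s^2 - 3s + 2(k_0 - m) \leq 0$ for integers $s \in \{0, 1, \dots, k_0\}$; squaring the resulting condition $k_0 \leq (3 + \sqrt{9 + 8(m-k_0)})/2$ shows that it is equivalent to $k_0(k_0-1) \leq 2m$, which is precisely the defining inequality $\binom{k_0}{2} \leq m$. Theorem~\ref{thm:dominance} then supplies $c(\varrho_{k_0}, \varrho_{k_0}, \beta)$, completing the proof.

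The main obstacle is finding the right decomposition in the residual case: the most natural ``peeling'' $\varrho_m = \varrho_{m-1} \Hplus (1^m)$ produces a $\beta$ that is again incomparable to $\varrho_{m-1}$, so a na\"ive inductive reduction would loop. Choosing the finer decomposition with $k_0 \approx \sqrt{2m}$ is forced by the dual requirements that $\beta_1 \leq k_0$ (so Theorem~\ref{thm:dominance} applies in the direction $\varrho_{k_0} \succeq \beta$) and $\beta_1 \geq 1$ (so $\beta$ is a valid partition), and the calibration $\binom{k_0}{2} \leq m < \binom{k_0+1}{2}$ threads these two constraints exactly.
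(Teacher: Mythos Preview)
Your proof is correct and follows the same two-step skeleton as the paper: (i) show that all but one $\lambda$ with $\Delta(\lambda,1_n)\le m$ are dominance-comparable to $\varrho_m$ and invoke Theorem~\ref{thm:dominance}, and (ii) treat the single residual hook $\lambda=(\binom{m}{2},1^m)$ separately. Your dominance verification in step~(i) is more explicit than the paper's, which simply observes that $|\mu|\le m-1$ forces $\mu\subseteq\varrho_{m-1}$ as Young diagrams (hence $\lambda_j\le(\varrho_m)_j$ for $j\ge 2$, so $\lambda\succeq\varrho_m$), and then asserts without details that for $r=m$ the only incomparable case is the hook.

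The genuine divergence is in step~(ii). The paper dispatches the hook in one line by citing the result (proved in \cite{IkenDom} by an easy induction on $m$ via $\varrho_m=\varrho_{m-1}\Hplus(1^m)$) that \emph{every} hook lies in $\varrho_m^{\otimes 2}$. You instead give a self-contained argument tailored to this one hook: peel off columns $(1^{k_0+1}),\dots,(1^m)$ from $\varrho_m$ with $k_0$ calibrated so that $\binom{k_0}{2}\le m<\binom{k_0+1}{2}$, reduce via the semigroup property to $c(\varrho_{k_0},\varrho_{k_0},\beta)$ with $\beta=(\binom{k_0+1}{2}-m,1^m)$, and then check $\varrho_{k_0}\succeq\beta$ directly. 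This avoids the external citation and the induction, at the cost of the extra calibration step; the paper's route is shorter but not self-contained. Your observation that the na\"ive one-column peel would loop (producing another incomparable hook) is exactly why the cited result in \cite{IkenDom} proceeds by induction rather than a single reduction.
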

\begin{proof}
The irreducible components of $\tau_n^{\otimes m}$ are precisely those that correspond to partitions $\lambda$ of $n$ whose blockwise distance from $1_n$ is at most $m$. If this blockwise distance is at most $m-1$, then $\lambda$, minus its top row, is contained within $\varrho_{m-1}$, and so $\lambda$ is comparable to $\varrho_m$ in dominance order. Thus we get $c(\varrho_m,\varrho_m,\lambda)$.

If the distance is $m$, the only way for $\lambda$ not to be comparable to $\varrho_m$ is if $\lambda=(n-m,1,1,\cdots,1)$. But in this case, we have $c(\varrho_m,\varrho_m,\lambda)$ anyway by the semigroup property, because $\lambda$ is a hook (\cite{IkenDom}).
\end{proof}

\begin{prop}
\label{prop:trivSqrtStair}
Let $n=\frac{m(m+1)}{2}$. All $\lambda\vdash n$ are contained in $\varrho_m^{\otimes 2\lceil\frac{m+1}{2}\rceil}$.
\end{prop}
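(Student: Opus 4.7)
The plan is to bootstrap Proposition~\ref{prop:stdInStair} into the higher tensor power bound. The key idea is that the trivial partition $1_n = (n)$ sits at blockwise distance at most $n-1$ from any partition of $n$ (just move each box one at a time from the first row), so tensoring with enough copies of the standard representation $\tau_n$ eventually reaches everything. Each chunk of $m$ copies of $\tau_n$ can then be paid for by two copies of $\varrho_m$ via Proposition~\ref{prop:stdInStair}.

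First I would verify the elementary fact that the set of constituents of $\tau_n^{\otimes k}$ is precisely the set of partitions $\lambda\vdash n$ with $\Delta(\lambda, 1_n)\leq k$. One direction follows from Pieri's rule $\tau_n\otimes \mu = \mu \oplus \bigoplus_{\mu' :\Delta(\mu,\mu')=1}\mu'$ by induction on $k$; the other direction uses the fact that $\mu$ itself always appears in $\tau_n\otimes \mu$ (from the trivial summand in $\tau_n$), so we can pad any path of length $<k$ with stationary moves. In particular, $\tau_n^{\otimes(n-1)}$ contains every partition of $n$ as a constituent.

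Next I would establish a monotonicity principle: if every irreducible constituent of a representation $A$ is a constituent of $B$, then for every representation $C$ and every $\ell\geq 1$, every constituent of $A^{\otimes \ell}\otimes C$ is a constituent of $B^{\otimes \ell}\otimes C$. This is immediate because any constituent of a tensor product appears in some tensor product of constituents of the factors. Applying this to $A = \tau_n^{\otimes m}$ and $B = \varrho_m^{\otimes 2}$, Proposition~\ref{prop:stdInStair} yields that every constituent of $(\tau_n^{\otimes m})^{\otimes \ell} = \tau_n^{\otimes m\ell}$ is a constituent of $(\varrho_m^{\otimes 2})^{\otimes \ell} = \varrho_m^{\otimes 2\ell}$.

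Finally I would choose $\ell = \lceil (m+1)/2\rceil$, which gives
\[
 m\ell \;\geq\; m\cdot\frac{m+1}{2} \;=\; n \;\geq\; n-1,
\]
so by the first step, $\tau_n^{\otimes m\ell}$ already contains every partition of $n$. Combined with the monotonicity step, this shows $\varrho_m^{\otimes 2\lceil(m+1)/2\rceil}$ contains every partition of $n$, as desired. There is essentially no serious obstacle here; the only point that warrants care is a clean statement of the constituent-monotonicity under tensor products, which is why it is worth isolating rather than leaving implicit.
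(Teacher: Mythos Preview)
Your proposal is correct and follows essentially the same argument as the paper: both use that every $\lambda$ appears in $\tau_n^{\otimes (n-1)}\subseteq \tau_n^{\otimes m\lceil(m+1)/2\rceil}$, then replace each block of $\tau_n^{\otimes m}$ by $\varrho_m^{\otimes 2}$ via Proposition~\ref{prop:stdInStair}. The only cosmetic difference is that the paper phrases the replacement step by choosing intermediate irreducibles $\mu_1,\dots,\mu_{\lceil(m+1)/2\rceil}$, whereas you package the same step as an explicit constituent-monotonicity principle for tensor powers.
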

\begin{proof}
All $\lambda\vdash n$ are contained in $\tau_n^{\otimes n-1}$ and thus in $\tau_n^{\otimes m\lceil\frac{m+1}{2}\rceil}$. So for each $\lambda$, there exist irreducible representations  $\mu_1,\cdots,\mu_{\lceil\frac{m+1}{2}\rceil}$, each contained in $\tau_n^{\otimes m}$, whose tensor product contains $\lambda$. Thus $\lambda$ is in the tensor product of $\lceil\frac{m+1}{2}\rceil$ irreducible representations each contained in $\varrho_m^{\otimes 2}$, and thus $\lambda$ is contained in $\varrho_m^{\otimes 2\lceil\frac{m+1}{2}\rceil}$
\end{proof}
\begin{rem} This already shows that $f(n)=O(\sqrt{n})$. As we will soon see, we can do much better.
\end{rem}

\subsection{Staircase Sum Identities}
As seen previously, staircases can be added to form larger staircases. The most straightforward way follows, and is a generalization of the identities used in the previous section (which we recover when $k=2$). We use the symbols $\hsum,\vsum$ for iterated applications of $\Hplus,\Vplus$.

\begin{prop} For any $n,k$ we have:
$$\varrho_n=\vsum_{j=0}^{k-1} \left(\hsum_{i=0}^{k-1} \left(\varrho_{\left\lfloor \frac{n+i-j}{k} \right\rfloor}\right)\right).$$

\label{thm:stairgrid}
\end{prop}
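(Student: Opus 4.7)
The plan is to verify the identity column-by-column. By definition of vertical sum, the $c$-th column of the right-hand side has length $\sum_{j=0}^{k-1} (Q_j)'_c$, where $Q_j := \hsum_{i=0}^{k-1} \varrho_{a_{i,j}}$ and $a_{i,j} = \lfloor (n+i-j)/k \rfloor$. So it suffices to show that $\sum_j (Q_j)'_c = \max(0,\, n - c + 1)$ for every $c \geq 1$, which is precisely the length of the $c$-th column of $\varrho_n$.

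First I would pin down the column vector of $Q_j$ explicitly. A short check (using that the $v$-th row of any partition $P$ equals the number of columns of $P$ of length $\geq v$) shows that the multiset of column lengths of $\lambda \Hplus \mu$ is the multiset union of those of $\lambda$ and $\mu$. Since each $\varrho_m$ contributes exactly one column of each length in $\{1, 2, \dots, m\}$, the columns of $Q_j$ form the multiset $\bigsqcup_i \{1, 2, \dots, a_{i,j}\}$. Writing $n - j = k q_j + r_j$ with $0 \leq r_j < k$, the sequence $a_{0,j} \leq \cdots \leq a_{k-1,j}$ consists of $(k - r_j)$ copies of $q_j$ followed by $r_j$ copies of $q_j + 1$, so the columns of $Q_j$ in decreasing order are $r_j$ copies of $q_j + 1$ followed by $k$ copies of each of $q_j, q_j - 1, \dots, 1$. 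This yields the closed form
\[ (Q_j)'_c \;=\; \max\!\bigl(0,\, \lceil (n - j - c + 1)/k \rceil\bigr). \]

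It then remains to verify, for each integer $M$, the arithmetic identity
\[ \sum_{j=0}^{k-1} \max\!\bigl(0,\, \lceil (M - j)/k \rceil\bigr) \;=\; \max(0, M), \]
applied with $M = n - c + 1$. When $M \leq 0$ this is trivial. For $M \geq 1$, write $M = kq + r$ with $1 \leq r \leq k$; then the terms with $0 \leq j < r$ each equal $q + 1$ and the remaining $k - r$ terms each equal $q$ (where $q = 0$ is allowed and those terms contribute $0$). The total is $r(q+1) + (k - r)q = kq + r = M$, matching the $c$-th column of $\varrho_n$.

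The main bookkeeping hurdle is nailing down the closed form $(Q_j)'_c = \max(0, \lceil (n - j - c + 1)/k \rceil)$ and handling the boundary cases (where $r_j = 0$, or where $c$ crosses $n - j$ so that the ceiling goes non-positive). Once that formula is in hand, the remaining floor/ceiling arithmetic is routine.
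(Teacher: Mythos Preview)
Your proof is correct, but it takes the dual route to the paper's. The paper works \emph{row-by-row}: it uses Hermite's identity $\sum_{i=0}^{k-1}\lfloor x+i/k\rfloor=\lfloor kx\rfloor$ to compute that the top row of $Q_j=\hsum_i \varrho_{\lfloor(n+i-j)/k\rfloor}$ has length $n-j$, observes that the constituent staircases differ in side length by at most one so the rows of $Q_j$ are $n-j,\,n-j-k,\,n-j-2k,\dots$, and then notes that vertical summation takes the multiset union of row lengths, giving exactly $\{n,n-1,\dots,1\}$. You instead work \emph{column-by-column}: horizontal summation takes the multiset union of column lengths, which you parlay into the closed form $(Q_j)'_c=\max(0,\lceil(n-j-c+1)/k\rceil)$, and then vertical summation adds these over $j$. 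Both arguments are short and elementary; the paper's is a touch quicker because Hermite's identity dispatches the arithmetic in one line, whereas your route requires a small case analysis to pin down the ceiling formula and then a second one to sum it. On the other hand, your explicit formula for $(Q_j)'_c$ is a nice byproduct that the paper's argument does not directly yield.
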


\begin{proof}

Recall Hermite's identity, which states that for any $x\in\mathbb R$ we have 

$$\sum_{i=0}^{k-1}\left\lfloor x+\frac{i}{k}\right\rfloor=\lfloor kx\rfloor.$$

Thus, the $j$th term in the vertical sum is a partition whose largest row has size

$$\sum_{i=0}^{k-1}\left\lfloor \frac{n+i-j}{k}\right\rfloor = n-j.$$ 

Since the $k$ summands are all staircases with length differing by at most one, the other rows in the sum have size $n-j-k, n-j-2k,\cdots$. Thus the vertical sum creates a partition with row sizes $n,n-1,\cdots,1$, which is what we want.
\end{proof}

For example, for $k=2$ we recover the identities we used in the previous section:
$$
\begin{aligned}
(\varrho_{k} \Hplus \varrho_{k-1}) \Vplus (\varrho_k \Hplus \varrho_k) = \varrho_{2k},\\ (\varrho_{k+1} \Hplus \varrho_{k}) \Vplus (\varrho_k \Hplus \varrho_k) = \varrho_{2k+1}.
\end{aligned}$$

As was visually demonstrated before, these identities may be understood as beginning with a square grid of staircases $\varrho_{\lfloor\frac{n+i-j}{k}\rfloor}$ and horizontally adding the rows, then vertically adding the resulting sums. Of course, we could also first add vertically by column, and then horizontally by row. 

This square grid of staircase partitions can also be added in an entirely different order to produce the same result. Instead of adding all the staircases in each row together, we can first add an $i$-by-$i$ square, and then add a length-$i$ column and a length $i+1$ row to get an $(i+1)$-by-$(i+1)$ square. However, some slightly messy rearrangement of the pieces is needed, and the following proposition is the result.

\begin{prop} 

For any $n,k$ we have:
\begin{enumerate}
\item $$\left(\varrho_n\Hplus ((k-1)\subV \varrho_{\lfloor\frac{n}{k-1}\rfloor})\right)\Vplus \left(\vsum_{i=0}^{k-1}\left(\varrho_{\left\lfloor\frac{n+\lfloor\frac{n}{k-1}\rfloor-k+1+i}{k}\right\rfloor}\right)\right)=\varrho_{n+\lfloor \frac{n}{k-1}\rfloor},$$

\item $$\left(\varrho_n\Hplus ((k-1)\subV \varrho_{\lfloor\frac{n}{k-1}\rfloor})\right)\Vplus \left(\vsum_{i=0}^{k-1}\left(\varrho_{\left\lfloor\frac{n+\lfloor\frac{n}{k-1}\rfloor+1+i}{k}\right\rfloor}\right)\right)=\varrho_{n+\lfloor \frac{n}{k-1}\rfloor+1}.$$

\end{enumerate}

\label{thm:layergrid}
\end{prop}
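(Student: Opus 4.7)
The plan is to verify the identity by directly computing the multiset of row lengths on both sides. Set $m = \lfloor n/(k-1)\rfloor$ and write $n = (k-1)m + r$ with $0 \leq r \leq k - 2$, so that $n + m = km + r$ and the target $\varrho_{n+m}$ has row-length multiset $\{1, 2, \ldots, n+m\}$. I will use throughout that $\Hplus$ adds row lengths index-by-index while $\Vplus$ takes the multiset union of row-length multisets, so in particular $(k-1)\subV \varrho_m$ has row-length multiset consisting of $k-1$ copies of each of $1, 2, \ldots, m$.

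First I would analyze $A := \varrho_n \Hplus ((k-1)\subV \varrho_m)$ row by row. When listed in decreasing order, $(k-1)\subV \varrho_m$ has rows $\underbrace{m, \ldots, m}_{k-1},\ \underbrace{m-1, \ldots, m-1}_{k-1},\ \ldots,\ \underbrace{1, \ldots, 1}_{k-1}$; adding these to $\varrho_n = (n, n-1, \ldots, 1)$ and grouping the top $(k-1)m$ rows of $A$ into $m$ blocks of $k-1$ consecutive rows shows that the rows of $A$ are precisely the integers in $\{1, 2, \ldots, n+m\}$ \emph{except} for the $m$ ``gap'' values $r+1,\ r+1+k,\ r+1+2k,\ \ldots,\ r+1+(m-1)k$. (The remaining $r$ rows of $A$, in positions $(k-1)m+1$ through $n$, come purely from $\varrho_n$ and have lengths $r, r-1, \ldots, 1$.)

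Next I would analyze the second factor. By Hermite's identity $\sum_{i=0}^{k-1}\lfloor (N+i)/k\rfloor = N$ applied with $N = n + m - k + 1$, the $k$ sizes $s_i = \lfloor(n + m - k + 1 + i)/k\rfloor$ take only two values, $m-1$ and $m$, with multiplicities $k-1-r$ and $r+1$ respectively. A direct row-length computation of the corresponding sum over these staircases shows that its $r'$-th row has length $n + m - r'k + 1$ for $r' = 1, \ldots, m$, yielding a row-length multiset equal precisely to the $m$ gap values identified above.

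Combining the two multisets under $\Vplus$ then recovers $\{1, 2, \ldots, n+m\}$, matching $\varrho_{n+m}$ and establishing part (1). Part (2) follows from the analogous argument after shifting the floor argument by $k$: Hermite's identity now produces sizes in $\{m, m+1\}$ (with multiplicities $k-r-1$ and $r+1$) and contributes an extra ``top'' row of length $n+m+1$, exactly accounting for the additional row in $\varrho_{n+m+1}$. The main obstacle is the careful bookkeeping in the second step, where Hermite's identity must be applied with attention to the precise multiplicities of the two staircase sizes so that their rows align exactly with the gaps from the first step.
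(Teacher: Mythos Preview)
Your argument is correct and matches the paper's: both compute the row-length multiset of $\varrho_n\Hplus((k-1)\subV\varrho_m)$, show that it omits from $\{1,\dots,n+m\}$ precisely the arithmetic progression $r+1,\,r+1+k,\,\ldots,\,r+1+(m-1)k$, and then invoke Hermite's identity to check that the second summand supplies exactly those missing row lengths (the paper is slightly terser, not introducing the residue $r$ explicitly). One point worth making explicit: when you compute the $r'$-th row of the second factor as $n+m+1-r'k$, you are treating $\vsum_{i=0}^{k-1}\varrho_{s_i}$ as a \emph{horizontal} sum; the paper's proof does exactly the same thing (using Hermite for the top row and then noting the rows drop by $k$), and indeed the identity as literally written with a vertical sum is false (try $n=7$, $k=4$), so both you and the paper are tacitly reading the inner $\vsum$ as $\hsum$.
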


The proof of this proposition will be given in Section~\ref{subsec:stairRigor}.

Below is a visual illustration of case 1 with $k=4, n=7$, where the pieces combine to make a $\varrho_9$.

\begin{figure}[h]
\label{fig:layersadd}
\caption{Layer Decomposition}
{\centering \includegraphics[scale=0.6]{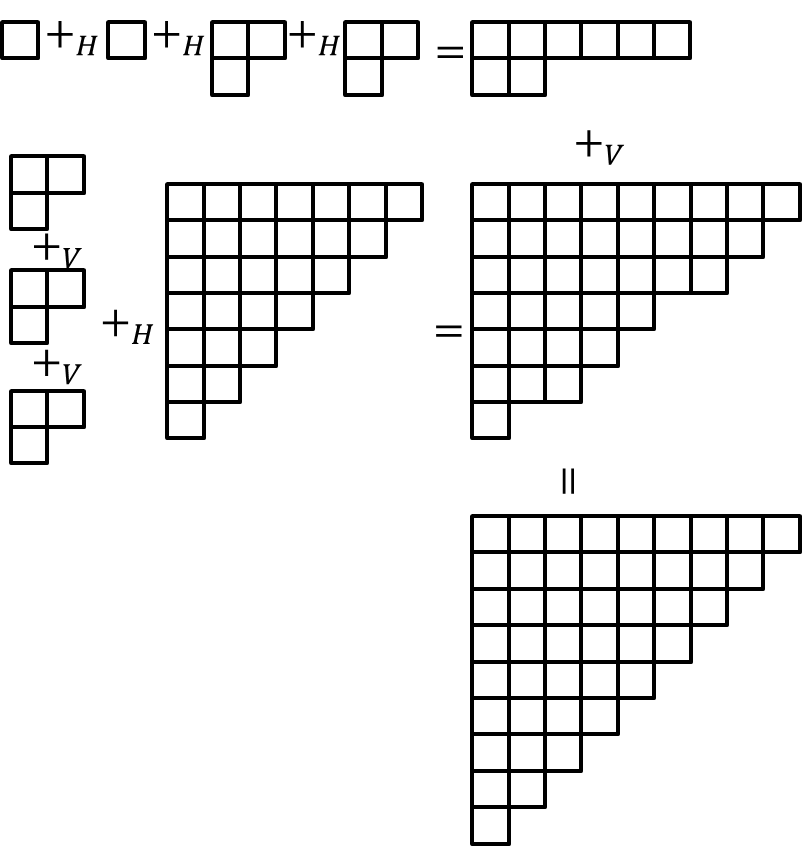}\par }
\end{figure}

Using the above, we can construct a decomposition of any large staircase into a staircase of about $(\frac{i}{k})^2$ times its area and $k^2-i^2$ staircases of about $\frac{1}{k^2}$ times its area. This may be done by repeatedly applying Theorem~\ref{thm:layergrid}.

\subsection{Overview of Proof of Theorem~\ref{thm:4thpower}}

Here is our main result, stated again.

\begin{repthm}{thm:4thpower}
For sufficiently large $n$, there exists $\lambda\vdash n$ such that $\lambda^{\otimes 4}$ contains all partitions of $n$.
\end{repthm}

For this overview, we will focus on the case of a triangular number $n=\frac{m(m+1)}{2}$, using for $\lambda$ the staircase $\varrho_m$. The general case will augment most steps simply by adding the appropriate number of extra blocks in the form of a trivial representation. The detailed proof can be found in Section~\ref{sec:detRes}.

The primary ingredient is the following theorem.

\begin{thm}
\label{thm:linearH}
For all $m$, $\varrho_m^{\otimes 2}\otimes \tau^{\otimes O(m)}$ contains all partitions of $n=\binom{m+1}{2}$.
\end{thm}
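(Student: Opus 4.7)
The plan is to show that every partition $\lambda\vdash n$ lies within blockwise distance $O(m)$ of some constituent $\lambda^*$ of $\varrho_m^{\otimes 2}$. By Pieri's rule, tensoring with $\tau_n$ corresponds to moving a single block in the Young diagram, so iterating shows that $\lambda\in\varrho_m^{\otimes 2}\otimes\tau_n^{\otimes O(m)}$ whenever such a $\lambda^*$ exists. Constituents of $\varrho_m^{\otimes 2}$ will be produced using the semigroup property (Theorem~\ref{thm:hsum} and its vertical corollary) combined with the dominance theorem (Theorem~\ref{thm:dominance}), applied to the staircase sum decompositions of Proposition~\ref{thm:stairgrid}. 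The argument splits by the rough shape of $\lambda$.

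For $\lambda$ with a long first row in the sense $\lambda_1-\lambda_2\geq m$, use the decomposition $\varrho_m=\varrho_{m-1}\Vplus(m)$: writing $\lambda=\lambda'\Hplus(m)$ with $\lambda'\vdash\binom{m}{2}$, and noting that $c((m),(m),(m))$ holds trivially since $(m)$ is the trivial representation of $S_m$, the vertical corollary gives $c(\varrho_m,\varrho_m,\lambda'\Hplus(m))$ whenever $c(\varrho_{m-1},\varrho_{m-1},\lambda')$. Hence by induction on $m$, if $\lambda'$ is within blockwise distance $C(m-1)$ of a constituent of $\varrho_{m-1}^{\otimes 2}$, then $\lambda$ is within distance $C(m-1)$ of a constituent of $\varrho_m^{\otimes 2}$ (using Proposition~\ref{prop:distbound} to combine horizontally). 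The conjugate case handles $\lambda$ with a long first column via Lemma~\ref{lem:conj2}. Thus it remains to handle ``squarish'' $\lambda$ for which neither the first row nor the first column exceeds the next by $m$.

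For squarish $\lambda$, use the four-piece decomposition $\varrho_{2k}=(\varrho_k\Hplus\varrho_{k-1})\Vplus(\varrho_k\Hplus\varrho_k)$ (with the analogous identity when $m=2k+1$) and aim to produce $\lambda^*=\lambda_1\Hplus\lambda_2\Hplus\lambda_3\Hplus\lambda_4$ within $O(m)$ block-moves of $\lambda$, with $\lambda_1,\lambda_3,\lambda_4$ each dominance-comparable to $\varrho_k$ and $\lambda_2$ dominance-comparable to $\varrho_{k-1}$. This mirrors the greedy column-partition algorithm of Section~\ref{sec:probprf}, but without the typical-shape hypothesis available there: sort $\lambda$'s columns by length, assign them to the four groups targeting the exact sizes, and fine-tune via short columns. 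The main obstacle is enforcing dominance comparability of each piece for an arbitrary squarish $\lambda$, since arbitrary column profiles need not produce pieces close to the continuous staircase in the sense of Lemma~\ref{lem:shapedomin}. The plan is to first spend $O(m)$ block-moves smoothing out anomalies in $\lambda$'s column-length profile (for instance, manufacturing a handful of length-one columns to use as size-adjusters, mimicking the role they played in the uniform proof), after which the squarishness hypothesis provides enough shape control for Lemma~\ref{lem:shapedomin} to yield dominance comparability of each piece. Once the four pieces are secured, the semigroup property packages the dominance witnesses from Theorem~\ref{thm:dominance} into $c(\varrho_m,\varrho_m,\lambda^*)$, giving $\Delta(\lambda,\lambda^*)=O(m)$ as required.
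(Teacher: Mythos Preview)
Your treatment of the ``squarish'' case has a genuine gap. The condition $\lambda_1-\lambda_2<m$ and $\lambda_1'-\lambda_2'<m$ controls only the first two row and column lengths; it carries no global shape information, so the appeal to Lemma~\ref{lem:shapedomin} (which requires proximity to a \emph{fixed} continuous Young diagram) is unjustified. Concretely, take $m=2k$ and let $\lambda$ be a rectangle of height $b$ with $\tfrac{k}{2}<b<k$, say $b\approx\tfrac{3k}{4}$. This $\lambda$ is squarish, and any horizontal split of $\lambda$ into four pieces of the target sizes yields rectangles of the same height $b$. Such a rectangle is \emph{not} dominance-comparable to $\varrho_k$: its first row has length ${\approx}\,\tfrac{2k}{3}<k$, so it does not dominate $\varrho_k$, while at $j=b$ its partial row sum equals $\binom{k+1}{2}$ whereas that of $\varrho_k$ is only ${\approx}\,\tfrac{15}{32}k^2$, so it is not dominated either. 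The dominance defect here is $\Theta(k^2)=\Theta(m^2)$, and no $O(m)$ block moves can repair it. Thus the plan of making all four pieces dominance-comparable after $O(m)$ smoothing cannot work in general.

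The paper's proof avoids this by building a genuine recursion into the decomposition step. It uses the layer decomposition of Proposition~\ref{thm:layergrid} with $k=4$ to write $\varrho_m$ as a core $\varrho_x$, $x\approx\tfrac{3m}{4}$, together with seven small flakes of side ${\approx}\,\tfrac{m}{4}$. The tallest columns of $\mu$ are packed into a single piece $\lambda_0$ of size $|\varrho_x|$ and handled \emph{recursively}; only the tail of shorter columns (whose heights are then bounded by $4\sqrt{2n}$ via a Durfee-square argument) is distributed among the flakes, where the purely numerical height criterion of Lemma~\ref{lem:domheights}, not any shape-proximity lemma, secures dominance for all but one exceptional flake (Lemma~\ref{lem:cuttail}). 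This yields $M(m)\le M\bigl(\tfrac{3m}{4}+O(1)\bigr)+M\bigl(\tfrac{m}{8}+O(1)\bigr)+O(m)$, which solves to $M(m)=O(m)$. Your ``long first row'' reduction decrements $m$ by only one and never fires on thick partitions such as the rectangle above, so it cannot substitute for this recursion on the core.
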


Equivalently, we will show that every partition is in $\varrho_m^{\otimes 2}\otimes \tau^{\otimes \lfloor c\sqrt{n}\rfloor}$ for an absolute constant $c$.

We defined earlier $H(m)$ to be the minimum number of tensor factors of $\tau_n$ needed so that $\varrho_m^{\otimes 2}\otimes \tau_n^{\otimes H(m)}$ contains every partition, so Theorem~\ref{thm:linearH} states that $H(m)=O(m)$. For convenience, we define a maximal function for the function $H$.
\begin{defn}

For $m\geq 0$ let $M(m)=\max\{H(1),H(2),\dots,H(m)\}.$

\end{defn}

It is clear that $M(m)=O(m) \iff H(m)=O(m)$, and so we will show the former. 

\begin{proof}[Proof Outline for Theorem~\ref{thm:linearH}]

We use Proposition~\ref{thm:layergrid} to decompose $\varrho_m$ as

$$\varrho_m=(\varrho_x\Hplus (3\subV \varrho_y))\Vplus\left(\vsum_{i=0}^{3} \varrho_{z_i}\right),$$

where up to $O(1)$ error, we have $x\approx \frac{3m}{4},$ $  y\approx z_i\approx \frac{m}{4}$. 

Suppose that we have eight partitions $\lambda_0\dots \lambda_{7}$ each contained in the tensor square of one of the above staircases. Then repeatedly combining the partitions and using the semigroup property gives us
$$c\left(\varrho_m,\varrho_m,\hsum_{i=0}^{7}(\lambda_i)\right).$$
\\

So, we only need to show how to decompose an arbitrary partition $\mu$ as a horizontal sum of many smaller partitions comparable to staircases, using standard representations to move a few blocks around as necessary. We will plan for all but two of these smaller partitions to be dominance-comparable to the staircase. These will then be the only two smaller partitions that need additional tweaking with standard representations.\\

The idea is the following: we cut off a large chunk $A$ of $\mu$, of about $\frac{7}{16}$ its size. The remainder $\lambda_0$ (consisting of the larger end of the columns) is modified recursively to be contained in the tensor square of the staircase of size about $\frac{9}{16}n$; $A$ itself is broken into $7$ pieces which can be modified to be suitable $\lambda_i$. This breaking is done contiguously, so that columns in a piece are either all at least as tall or all at least as short as columns in a given other piece. If all columns in one of these pieces are either very tall or very short, we can conclude dominance comparability with the staircase. By further breaking down the smaller pieces, we can ensure using this criterion that most of the small parts in our decomposition are dominance comparable to the corresponding staircases. The subadditivity of blockwise distance allows us to derive a recursion for $M(m)$ from this decomposition, and we do not have recursive terms corresponding to most of these smaller pieces. The end result is the following recurrence.

$$M(m)\leq M\left(\frac{3m}{4}+O(1)\right)+M\left(\frac{m}{8}+O(1)\right)+cm+O(1).$$

Solving this recurrence by strong induction gives $$M(m)\leq Cm$$ for some constant $C$ (see Section~\ref{subsec:close2all} for an explicit $C$ which works for large $n$). This proves that $M(m)=O(m)$, and hence Theorem~\ref{thm:linearH}.

\end{proof}

To conclude Theorem~\ref{thm:4thpower} it remains to show that for any $c$, for large enough $n$, any irreducible representation in $\tau^{\otimes \lfloor c\sqrt{n}\rfloor}$ is contained in $\varrho_m^{\otimes 2}$. In the next section, we explain our results in more detail and fill in this last step to complete the proof of Theorem~\ref{thm:4thpower} even in non-triangular number cases. \\

\section{Detailed Proof of Theorem~\ref{thm:4thpower}}
\label{sec:detRes}
We follow the same plan as in the proof overview, giving more detail and showing the extensions to non-triangular values of $n$. First, we rigorize some of our earlier work with staircase sum identities.

\subsection{Staircase Sum Identities, Revisited}
\label{subsec:stairRigor}

Here is a result we used in the overview, which will now be proven. 

\begin{repprop}{thm:layergrid}

For any $n,k$ we have:
\begin{enumerate}
\item $$\left(\varrho_n\Hplus ((k-1)\subV \varrho_{\lfloor\frac{n}{k-1}\rfloor})\right)\Vplus \left(\vsum_{i=0}^{k-1}\left(\varrho_{\left\lfloor\frac{n+\lfloor\frac{n}{k-1}\rfloor-k+1+i}{k}\right\rfloor}\right)\right)=\varrho_{n+\lfloor \frac{n}{k-1}\rfloor},$$

\item $$\left(\varrho_n\Hplus ((k-1)\subV \varrho_{\lfloor\frac{n}{k-1}\rfloor})\right)\Vplus \left(\vsum_{i=0}^{k-1}\left(\varrho_{\left\lfloor\frac{n+\lfloor\frac{n}{k-1}\rfloor+1+i}{k}\right\rfloor}\right)\right)=\varrho_{n+\lfloor \frac{n}{k-1}\rfloor+1}.$$

\end{enumerate}

\end{repprop}

\begin{proof}

We simply check each equality by evaluating the left sides. We begin with 1. We have $\varrho_n=(n,n-1,\dots 1)$. Because vertical addition is equivalent to taking the disjoint union of the row-multisets, we have $(k-1)\subV \varrho_{\lfloor\frac{n}{k-1}\rfloor}=(\lfloor\frac{n}{k-1}\rfloor,\lfloor\frac{n}{k-1}\rfloor,\dots, 1)$ where each distinct value is repeated $k-1$ times. Therefore,

$$\left(\varrho_n\Hplus ((k-1)\subV \varrho_{\lfloor\frac{n}{k-1}\rfloor})\right)=
\left(n+\left\lfloor\frac{n}{k-1}\right\rfloor, \dots, n+\left\lfloor\frac{n}{k-1}\right\rfloor-k+2,n+\left\lfloor\frac{n}{k-1}\right\rfloor-k,\dots\right)$$ 
where the omitted row lengths are precisely the values $n+\lfloor\frac{n}{k-1}\rfloor+1-jk$ for positive integral $j$. Again using the fact the vertical addition is simply disjoint union of row lengths, it suffices to check that

$$\left(\vsum_{i=0}^{k-1}\left(\varrho_{\left\lfloor\frac{n+\lfloor\frac{n}{k-1}\rfloor-k+1+i}{k}\right\rfloor}\right)\right)$$
consists of precisely these row lengths. That the largest row lengths match follows from Hermite's identity. Because the $k$ numbers ${\left\lfloor\frac{n+\lfloor\frac{n}{k-1}\rfloor-k+1+i}{k}\right\rfloor}$ differ pairwise by at most 1, the row lengths of
$$\left(\vsum_{i=0}^{k-1}\left(\varrho_{\left\lfloor\frac{n+\lfloor\frac{n}{k-1}\rfloor-k+1+i}{k}\right\rfloor}\right)\right)$$
will decrease by $k$ until reaching $0$, which is exactly the correct behavior.

The proof of 2. is identical, except now the omitted row lengths are those of the form $n+\left\lfloor\frac{n}{k-1}\right\rfloor+1-jk$ for \textit{non-negative} integers $j$. 

\end{proof}

It is easy to see that the function $f(n)=n+\left\lfloor\frac{n}{k-1}\right\rfloor$ attains all integer values except those congruent to $-1 \mod k$. Such values are attained by $f(n)+1$, so Proposition~\ref{thm:layergrid} suffices to break up any staircase into smaller pieces.

We now use the above proposition to construct a decomposition of any large staircase into a staircase of about $(\frac{i}{k})^2$ times its area and $k^2-i^2$ staircases of about $\frac{1}{k^2}$ its area, for any fixed $i<k$.

\begin{defn}

Define a \emph{$k$-layer decomposition} of $\varrho_m$ as a decomposition of $\varrho_m$ into $2k$ staircases which have sizes given by the left hand side of equation 1. of Proposition~\ref{thm:layergrid}, where $m=n+\left\lfloor\frac{n}{k-1}\right\rfloor$, and which sum to $\varrho_m$ in the way indicated by that equation. The piece $\varrho_n$ in this decomposition is called the \emph{core}.

For $1\leq i\leq k-1$, define a \emph{$(k,i)$-layer decomposition} of $\varrho_m$ recursively as follows:

A $(k,k-1)$-layer decomposition of $\varrho_m$ is a $k$-layer decomposition of $\varrho_m$.

For $i<k-1$, a $(k,i)$-layer decomposition of $\varrho_m$ is the result of taking a $(k,i+1)$-layer decomposition of $\varrho_m$, and further decomposing its core through a $(k-1,i)$-layer decomposition.
\end{defn}

Thus, a $(k,i)$-layer decomposition is a decomposition of $\varrho_m$ into $k^2-i^2+1$ smaller staircases. We extend the definition of the core to these decompositions and introduce a related term for the remaining pieces.

\begin{defn}

We call the large part of a $(k,i)$-layer decomposition of $\varrho_m$ the \textit{core} and the other $k^2-i^2$ parts the \textit{flakes}.
\end{defn}

It is clear that, for fixed $(i, k)$, the flakes formed differ by only $O(1)$ in length.

In fact, as long as $2i\leq k$, by using Proposition $\ref{thm:layergrid}$ intelligently we can ensure that they differ by at most 1.

\begin{defn}

We call a $(k,i)$-layer decomposition of $\varrho_m$ into parts where all flakes pairwise differ in length by at most $1$ a \textit{smooth layer decomposition}.

\end{defn}

\begin{prop}
\label{thm:layersmooth}

For any $(m,k,i)$ with $2i\leq k$, there is a smooth $(k,i)$-layer decomposition of $\varrho_m$.
\end{prop}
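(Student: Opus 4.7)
My plan is to construct the smooth decomposition inductively over the layers, using Proposition~\ref{thm:layergrid} at each layer and carefully choosing between its two cases so that all flake sizes stay within a range of width $1$ throughout all layers.

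The technical heart is a single-layer size analysis. Writing the staircase being decomposed at the $j$-th layer as $\varrho_{m_j}$ with $m_j = jq_j + r_j$ for $0 \leq r_j \leq j-1$, a direct Hermite-identity computation on the right-hand side of Proposition~\ref{thm:layergrid} shows that case $1$ (available iff $r_j \leq j-2$) produces $2j-1$ flakes whose sizes lie only in $\{q_j-1, q_j\}$, while case $2$ (available iff $r_j \geq 1$) produces $2j-1$ flakes whose sizes lie only in $\{q_j, q_j+1\}$. A parallel calculation on the new core $\varrho_{m_{j-1}}$ verifies the identity $q_{j-1} = q_j$ regardless of which case is used, so the common value $q := q_j$ is invariant across all layers, while the residue evolves by $r_{j-1} = r_j$ (case $1$) or $r_{j-1} = r_j - 1$ (case $2$).

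This reduces the problem to a clean dichotomy: consistently applying case $1$ at every layer puts all flakes in $\{q-1, q\}$, and consistently applying case $2$ puts all flakes in $\{q, q+1\}$; either choice immediately gives a smooth decomposition. The main obstacle, which I expect to be the delicate part of the argument, is to show that at least one of these two consistent strategies is actually feasible through all the layers $j = k, k-1, \ldots, i+1$. Tracking the residue, the all-case-$1$ option requires $r_k$ to lie in a certain low range (since case $1$ preserves $r$ and needs $r_j \leq j-2$ even at the smallest layer $j = i+1$), and the all-case-$2$ option requires $r_k$ to lie in a certain high range (since case $2$ decrements $r$ by one per layer and needs $r_j \geq 1$ at the end); the hypothesis $2i \leq k$ is exactly what is needed for these two allowed ranges to jointly cover all of $\{0, 1, \ldots, k-1\}$, and a short case split on $r_k$ then completes the proof.
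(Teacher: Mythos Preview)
Your approach is the same as the paper's: reduce to the dichotomy ``use case~1 at every layer'' versus ``use case~2 at every layer'' via exactly the single-layer flake-size computation and residue evolution you describe. The gap is in your last sentence. From your own constraints --- all-case-1 needs $r_k \leq (i+1)-2 = i-1$, and all-case-2 needs $r_k - (k-i-1) \geq 1$, i.e.\ $r_k \geq k-i$ --- the two admissible ranges are $\{0,\ldots,i-1\}$ and $\{k-i,\ldots,k-1\}$. These jointly cover $\{0,\ldots,k-1\}$ if and only if $i \geq k-i$, that is $2i \geq k$, \emph{not} $2i \leq k$. When $2i < k$ the residues $r_k \in \{i,\ldots,k-i-1\}$ lie in neither range, and mixing the two cases cannot rescue smoothness: a direct check of the flake sizes in Proposition~\ref{thm:layergrid} shows that case~1 always yields at least one flake of size $q-1$ and case~2 always at least one of size $q+1$, so any mixture produces flakes spanning $\{q-1,q,q+1\}$.

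In short, your argument (and the paper's, which follows the identical all-case-1/all-case-2 plan with the same threshold $j \lessgtr k-i$) actually establishes the statement under the hypothesis $2i \geq k$; the printed $2i \leq k$ appears to be a sign slip. The paper's applications only invoke the single-layer case $i = k-1$, where smoothness is immediate, so the main results are unaffected.
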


\begin{proof}
We first examine based on the value of $j$ to see which flake lengths can arise in a $(k,1)$ decomposition.

If $0\leq j\leq k-2$ then we may use part 1. of Proposition~\ref{thm:layergrid}. The core length is $n=t(k-1)+j$, which means the flake lengths are $\left\lfloor\frac{n}{k-1}\right\rfloor=t$ and ${\left\lfloor\frac{n+\left\lfloor\frac{n}{k-1}\right\rfloor-k+1+i}{k}\right\rfloor}=\left\lfloor\frac{t(k-1)+j+t-k+1+i}{k}\right\rfloor=\left\lfloor\frac{(t-1)(k)+j+1+i}{k}\right\rfloor$. Because $i\leq k-1$ we have $(t-1)k+j+1+i<(t+1)k$, so these numbers range over the set $\{t-1,t\}$.

If $1\leq j\leq k-1$ then we may use part 2. of Proposition~\ref{thm:layergrid}. The core length is $n=t(k-1)+j-1$, which makes the flake lengths $\left\lfloor\frac{n}{k-1}\right\rfloor=t$ and  ${\left\lfloor\frac{n+\left\lfloor\frac{n}{k-1}\right\rfloor+1+i}{k}\right\rfloor}=\left\lfloor\frac{t(k-1)+j-1+t+1+i}{k}\right\rfloor=t+\left\lfloor\frac{j+i}{k}\right\rfloor$. This clearly ranges over $\{t,t+1\}$.

Now, assume first that $j<k-i$. Then we may use 1. of Proposition~\ref{thm:layergrid} $i$ times, having after $\ell$ iterations a core of length $t(k-\ell)+j$ and all flakes of lengths in $\{t-1,t\}$. Because $j<k-i$, we always have $j\leq k-\ell-2$ for $\ell \leq k-1$, so we can complete the $i$ iterations in this way.

If $j\geq k-i$, then as $2i\leq k$ we have $j\geq i$. Then we may instead use only part 2. of the proposition, which after $\ell$ iterations leaves a core of length $t(k-\ell)+j-\ell$. Because $j\geq i$, this is strictly greater than $t(k-\ell)$ for all $\ell < i$, so we can again run this process to completion, with all flakes of length in $\{t,t+1\}$. In either case, we are done.

\end{proof}

\subsection{Proof of Theorem \ref{thm:linearH}}
\label{subsec:close2all}

We now prove Theorem~\ref{thm:linearH}, restated with an explicit asymptotic constant.

\begin{repthm}{thm:linearH}[with explicit constant]
For all $m$, $\varrho_m^{\otimes 2}\otimes \tau^{\otimes (1184m+O(1))}$ contains all partitions of $n=\binom{m+1}{2}$.
\end{repthm}

\begin{proof}

We use Proposition~\ref{thm:layergrid} to write $\varrho_m$ as a $(k,1)$ decomposition

$$\varrho_m=(\varrho_x\Hplus ((k-1)\subV \varrho_y))\Vplus\left(\vsum_{i=0}^{k-1} \varrho_{z_i}\right),$$
where up to $O(1)$ error, we have $x\approx \frac{(k-1)m}{k},$ $  y\approx z_i\approx \frac{m}{k}$. 

Now, suppose that we have partitions $\lambda_0\dots \lambda_{2k-1}$ such that

$$ c(\varrho_x,\varrho_x,\lambda_0),$$ 

$$c(\varrho_y,\varrho_y,\lambda_j)$$
for $1 \leq j \leq k-1$ and 
$$c(\varrho_{z_i},\varrho_{z_i},\lambda_{k+i})$$
for $0\leq i \leq k-1$.

Then repeated application of the semigroup property implies 

$$c\left((k-1)\subV \varrho_y,(k-1)\subV \varrho_y,\hsum_{i=1}^{k-1} (\lambda_i)\right),\hspace{1cm} 
c\left(\vsum_{j=0}^{k-1} (\varrho_{z_j}),\vsum_{j=0}^{k-1} (\varrho_{z_j}),\hsum_{i=k}^{2k-1} (\lambda_{i})\right),$$

which in turn imply

$$c\left(\varrho_x\Hplus ((k-1)\subV \varrho_y),(\varrho_x\Hplus ((k-1)\subV \varrho_y),\hsum_{i=0}^{k-1} (\lambda_i))\right) $$

and so

$$c\left(\varrho_x\Hplus ((k-1)\subV \varrho_y)\Vplus\left(\vsum_{i=0}^{k-1} \varrho_{z_i}\right),\varrho_x\Hplus ((k-1)\subV \varrho_y)\Vplus\left(\vsum_{i=0}^{k-1} \varrho_{z_i}\right),\hsum_{i=0}^{2k-1}(\lambda_i)\right)$$

$$\implies c\left(\varrho_m,\varrho_m,\hsum_{i=0}^{2k-1}(\lambda_i)\right).$$

Thus, we only need to show how to decompose an arbitrary partition as a horizontal sum of many smaller partitions.

\begin{lem}
\label{lem:domheights}
Suppose $\lambda\vdash n$, where $n=\frac{k(k+1)}{2}$, and the columns of $\lambda$ have sizes $c_1\geq c_2\geq \cdots \geq c_l$. Then if either $c_l\geq k$ or $c_1\leq \left\lfloor\frac{k}{2}\right\rfloor+1$, we have $c(\varrho_k,\varrho_k,\lambda)$.
\end{lem}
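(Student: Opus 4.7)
The plan is to show, in each of the two cases, that $\lambda$ is dominance-comparable to $\varrho_k$; then $c(\varrho_k,\varrho_k,\lambda)$ follows immediately from Theorem~\ref{thm:dominance}. In Case~1 I expect $\varrho_k\succeq\lambda$, and in Case~2 I expect $\lambda\succeq\varrho_k$, corresponding to the intuition that the first hypothesis makes $\lambda$ too tall-and-skinny and the second too short-and-wide compared to the staircase.

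For Case~1 ($c_l\geq k$), every column has height at least $k$, so the first $k$ rows of $\lambda$ are completely filled by its $l$ columns, giving $\lambda_1=\cdots=\lambda_k=l$. Counting boxes, $kl\leq n=\binom{k+1}{2}$, so $l\leq(k+1)/2$. For $j\leq k$ the partial-sum difference is
$$
S^{\varrho_k}_j-S^\lambda_j=jk-\binom{j}{2}-jl=\tfrac{j}{2}\bigl(2(k-l)-(j-1)\bigr)\geq \tfrac{j}{2}(k-j)\geq 0,
$$
using $2(k-l)\geq k-1$; for $j\geq k$ we have $S^{\varrho_k}_j=n\geq S^\lambda_j$ trivially. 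Hence $\varrho_k\succeq\lambda$.

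For Case~2 ($c_1\leq a:=\lfloor k/2\rfloor+1$), I would rewrite $S^\lambda_j=n-\sum_i(c_i-j)_+$ and upper-bound the sum $\sum_i(c_i-j)_+$ over the class of partitions with $\sum c_i=n$ and $c_i\leq a$. Since $c\mapsto\min(c,j)$ is concave, the extremum is attained when each $c_i$ sits at an endpoint $\{1,a\}$: take $\lfloor n/a\rfloor$ columns of height $a$ and the rest of height $1$. This yields $S^\lambda_j\geq n-\lfloor n/a\rfloor(a-j)$ for $j<a$, while $S^\lambda_j=n\geq S^{\varrho_k}_j$ automatically for $j\geq a$. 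A short parity check finishes the argument: when $k$ is odd, $\lfloor n/a\rfloor=k$, so the bound becomes $kj\geq jk-\binom{j}{2}$; when $k$ is even, $\lfloor n/a\rfloor=k-1$, so the bound is $(k-1)j+1$, and $(k-1)j+1\geq jk-\binom{j}{2}$ reduces to $1\geq j(3-j)/2$, which holds for all $j\geq 1$ (with equality at $j=1,2$). Thus $\lambda\succeq\varrho_k$.

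The main technical step is the extremal reduction in Case~2; once one commits to pushing columns to the endpoints $\{1,a\}$ via concavity, the remaining calculation splits naturally by the parity of $k$ and is entirely elementary. Case~1 is a direct partial-sum comparison after using the trivial bound $l\leq(k+1)/2$ that the tall-column hypothesis forces.
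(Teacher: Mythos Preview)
Your argument reaches the right conclusion but takes a longer route than the paper, and the extremal step in Case~2 has a small gap as written.

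In Case~2 you assert that, because $c\mapsto\min(c,j)$ is concave, the minimum of $\sum_i\min(c_i,j)$ subject to $c_i\le a$ and $\sum c_i=n$ is attained with $\lfloor n/a\rfloor$ columns of height $a$ and the rest of height $1$. This is not true in general: the number of columns is not fixed, and when the remainder $r:=n-a\lfloor n/a\rfloor$ exceeds $1$, replacing $r$ height-$1$ columns by a single column of height $r$ can strictly decrease $\sum_i\min(c_i,j)$ (try $n=10$, $a=4$, $j=1$). Your bound $S_j^\lambda\ge n-\lfloor n/a\rfloor(a-j)$ does hold here, but only because for the specific values $n=\binom{k+1}{2}$ and $a=\lfloor k/2\rfloor+1$ one has $r\in\{0,1\}$, as your own parity computation reveals. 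You should say this explicitly; a clean fix is the linear bound $(c-j)_+\le\frac{a-j}{a}\,c$ on $[0,a]$, which sums to $\sum_i(c_i-j)_+\le\frac{(a-j)n}{a}=\lfloor n/a\rfloor(a-j)+\frac{r(a-j)}{a}$, and then integrality of the left side together with $r\le1$ gives your floor bound.

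By contrast, the paper works throughout with \emph{column} partial sums, exploiting the self-conjugacy of $\varrho_k$. In Case~1 it simply notes $\sum_{i\le j}c_i\ge kj\ge\sum_{i\le j}(\varrho_k)_i$; in Case~2, the first $j<k$ columns of $\varrho_k$ have average $\frac{k+(k-j+1)}{2}\ge\frac{k+2}{2}\ge c_1$, so $\sum_{i\le j}(\varrho_k)_i\ge\sum_{i\le j}c_i$ immediately. This avoids the translation to row sums and the extremal optimization entirely, and is a couple of lines in each case.
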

\begin{proof}
First consider the case where $c_l\geq k$. Then, for $1\leq j \leq l$, we have $\sum_{i=1}^j c_i\geq kj\geq k+(k-1)+\cdots+(k-(j-1))$, while for $j>l$, $\sum_{i=1}^j c_i\geq n$. So, $\lambda$ is clearly dominance-comparable to the staircase.

Now consider the case where $c_1\leq \left\lfloor\frac{k}{2}\right\rfloor+1$. Now for any $j<k$, the leftmost $j$ columns of $\varrho_k$ have average size $\frac{k+(k-j+1)}{2}\geq \frac{k+2}{2}$, which is at least the average size of the first $j$ columns of $\lambda$. For $j\geq k$, the sizes of the leftmost $j$ columns of $\varrho_k$ sum to $n$. So, $\lambda$ and $\varrho_k$ are again comparable in the dominance order.

Thus, in either case, $c(\varrho_k,\varrho_k,\lambda)$ follows by Theorem~\ref{thm:dominance}.
\end{proof}

We now introduce a lemma which allows us to break up a partition $\mu$ with bounded height into partitions contained in staircase tensor squares. The reader may wish to note that only the first part of the lemma is needed to conclude that $O(\sqrt{n})$ standard representations suffice to give every representation, by taking (e.g.) $(k,i)=(7,5)$ instead of $(k,i)=(4,3)$. However, using the second part results in a better constant factor. Recall that we defined $M(m)=\max\{H(1),H(2),\dots,H(m)\}$, where $H(m)$ was the minimal non-negative integer $\ell$ such that $\varrho_m^{\otimes 2}\otimes\tau_n^{\otimes \ell}$ contains all partitions of $n$ as constituents.

\begin{lem}
\label{lem:cuttail}

Consider a partition $\mu$ and staircases $\varrho_{s_1}, \dots , \varrho_{s_r}$ with $s_i \in \{b-1,b\}$ for all $i$. Assume that the height of the largest column $c_1$ satisfies $c_1\leq C$ and also $0\leq (\sum\limits_{i} |\varrho_{s_i}|)-|\mu|  \leq C$. Then there exists a partition $\widehat{\mu}$ such that  $c\left(\hsum_{i=1}^r \varrho_{s_i},\hsum_{i=1}^r \varrho_{s_i},\widehat{\mu}\right)$ and also the generalized blockwise distance $\Delta=\Delta(\widehat{\mu},\mu)$ between $\widehat{\mu}$ and $\mu$ satisfies

\begin{enumerate}
\item[(1)] $\Delta\leq (4r-3)C+M(b)$

and

\item[(2)] $\Delta\leq (4r+9)C+M(\lceil \frac{b}{2}\rceil)$

\end{enumerate}
\end{lem}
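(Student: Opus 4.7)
The plan is to construct $\widehat{\mu}=\widehat{\mu}^{(1)}\Hplus\cdots\Hplus\widehat{\mu}^{(r)}$ in which each $\widehat{\mu}^{(i)}$ is a constituent of $\varrho_{s_i}^{\otimes 2}$; the semigroup property (Theorem~\ref{thm:hsum}) then yields $c(\hsum_i\varrho_{s_i},\hsum_i\varrho_{s_i},\widehat{\mu})$, and the generalized blockwise distance $\Delta(\mu,\widehat{\mu})$ can be bounded by the total number of single-block operations applied group-by-group. I would sort the columns of $\mu$ in weakly decreasing order and greedily partition them into $r$ consecutive blocks $T^{(1)},\ldots,T^{(r)}$, where $T^{(i)}$ collects the next columns whose cumulative sum does not exceed the cumulative target $\sum_{j\leq i}n_j$, writing $n_j=|\varrho_{s_j}|$. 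Since each column has height at most $C$, every $|T^{(i)}|$ lies within $C$ of $n_i$; the hypothesis $\sum n_i-|\mu|\leq C$ controls the telescoping of the greedy errors.

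Next, I would bring each $|T^{(i)}|$ exactly to $n_i$ by a small number of add/remove/inter-group-move operations on single blocks, and classify each $T^{(i)}$ by the heights of its columns. Because the sorted column heights decrease through the groups, only two thresholds are relevant to Lemma~\ref{lem:domheights}: the value $b$ (for the ``$c_l\geq s_i$'' clause) and the value $\lfloor b/2\rfloor+1$ (for the ``$c_1\leq\lfloor s_i/2\rfloor+1$'' clause), and only $O(1)$ groups straddle these thresholds. For groups whose columns are either all tall ($\geq b$) or all short ($\leq\lfloor b/2\rfloor+1$), the size adjustment can be carried out in a classification-preserving way --- extending an existing column in the tall case, appending a singleton in the short case --- so Lemma~\ref{lem:domheights} combined with Theorem~\ref{thm:dominance} gives the required $c(\varrho_{s_i},\varrho_{s_i},\widehat{\mu}^{(i)})$ at no further cost.

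The $O(1)$ transitional groups, along with any groups harboring ``middle-height'' columns in the range $(\lfloor b/2\rfloor+1,b)$, are then consolidated via another bounded number of inter-group column swaps into a single problematic group of exact size $n_{i^*}$ for some $i^*$. For part~(1), I would invoke the definition of $M$ directly: this group, even as an arbitrary partition of $n_{i^*}$, lies in $\varrho_{s_{i^*}}^{\otimes 2}\otimes\tau^{\otimes M(s_{i^*})}$, so some constituent $\widehat{\mu}^{(i^*)}$ of $\varrho_{s_{i^*}}^{\otimes 2}$ sits within $M(s_{i^*})\leq M(b)$ block moves of it. A careful audit of all the single-block operations --- size adjustments, inter-group swaps, consolidation, and final correction --- then yields the stated $(4r-3)C+M(b)$ bound. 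For part~(2), rather than paying $M(b)$, I would further split $\varrho_{s_{i^*}}$ into four sub-staircases of side approximately $\lceil b/2\rceil$ using Proposition~\ref{thm:stairgrid} with $k=2$, re-partition the problematic group accordingly into four sub-pieces, apply $M(\lceil b/2\rceil)$ to each, and reassemble via the semigroup property. This costs an additional bounded amount of $C$-alignment work (summing to at most $12C$ extra) but reduces the recursive term to $M(\lceil b/2\rceil)$.

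The main obstacle is the detailed bookkeeping of single-block operations and inter-group swaps --- particularly ensuring that the size adjustments do not disturb the dominance classification of the well-behaved groups, and that, when the middle-height region is nontrivial, its columns can indeed be gathered into $O(1)$ groups rather than spread across many. The telescoping structure of the greedy errors is what makes the linear-in-$r$ dependence possible, and the ability to move single blocks across group boundaries at unit cost is what keeps the constant multipliers $4r-3$ and $4r+9$ in front of $C$ modest; pinning down those exact constants is a matter of careful case analysis, especially in part~(2) where the extra $+12C$ tracks the realignment needed when the problematic group is itself subdivided into four sub-staircase pieces.
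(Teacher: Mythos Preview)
Your approach has a genuine gap in the handling of ``middle-height'' columns. You assert that only $O(1)$ groups straddle the thresholds $b$ and $\lfloor b/2\rfloor+1$, which is true, but you then treat groups whose columns lie \emph{entirely} in the middle range $(\lfloor b/2\rfloor+1,\,b)$ as if there were only boundedly many of them, to be ``consolidated via another bounded number of inter-group column swaps into a single problematic group.'' This fails in general: nothing prevents $\Theta(r)$ of the groups $T^{(i)}$ from consisting entirely of middle-height columns (for instance, take $\mu$ with all columns of height roughly $3b/4$). Such groups satisfy neither clause of Lemma~\ref{lem:domheights} against a staircase of side $s_i\in\{b-1,b\}$, and their combined size $\Theta(r\cdot n_{i^*})$ cannot be packed into one group of size $n_{i^*}$.

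The paper's proof avoids this by using only the \emph{single} threshold $\lceil b/2\rceil$ and, crucially, by \emph{subdividing the target staircase} for every non-short group. Concretely, each group $\zeta_i^*$ whose minimum column exceeds $\lceil b/2\rceil$ has its target $\varrho_{s_i}$ broken into four sub-staircases of side at most $\lceil b/2\rceil$ via Proposition~\ref{thm:stairgrid}; the columns of $\zeta_i^*$ (all of height $>\lceil b/2\rceil$) are then tall relative to these smaller staircases, so the first clause of Lemma~\ref{lem:domheights} applies after the greedy re-split. This converts every middle-height group into four dominance-comparable pieces at a cost of at most $4C$ block moves each, and leaves exactly \emph{one} genuinely exceptional group $i_0$ (the unique index where the threshold $\lceil b/2\rceil$ is crossed). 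That is where the single $M(b)$ in part~(1), or after one further subdivision the single $M(\lceil b/2\rceil)$ in part~(2), is spent. Your part~(2) sketch also slips here: applying $M(\lceil b/2\rceil)$ to \emph{each} of four sub-pieces would give $4M(\lceil b/2\rceil)$, not the claimed $M(\lceil b/2\rceil)$; the paper instead reruns the threshold argument on the four sub-pieces to isolate one remaining exceptional sub-piece.
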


\begin{proof}

We break up $\mu$ by its column lengths $c_1\geq  c_2\dots \geq c_h$. We will construct partitions $\zeta_1\dots\zeta_{r}$ which horizontally sum to $\widehat{\mu} \approx \mu$. We begin by constructing partitions $\zeta_i^*$, and then adjusting each $\zeta_i^*$ to form $\zeta_i$. 

To form $\zeta_i^*$, we preliminarily use the greedy algorithm to assign to $\zeta_1^*$ as many columns from $\mu$ as possible subject to $|\zeta_1^*|\leq |\varrho_{s_1}|$ starting from the smallest column $c_h$, then continue similarly with $\zeta_2^*$ and so on through $\zeta_r^*$. Note that some columns of $\mu$ may be included in none of the $\zeta_i^*$. The assumptions $c_1\leq C$ and $0\leq \sum\limits_{i} |\varrho_{s_i}|-|\mu|  \leq C$ guarantee that for each $i$, we have $$|\zeta_i^*|\leq |\varrho_{s_i}| \leq |\zeta_i^*|+C.$$

Now we modify each $\zeta_i^*$ based on the height condition of Lemma~\ref{lem:domheights}. For each $i$ set $X_i$ to be the height of the smallest column of $\zeta_i$ and $Y_i$ to be the height of the largest. Clearly we have $X_1\leq Y_1\leq X_2 \dots \leq X_r \leq Y_r$. We leave alone those $\zeta_i^*$ with $Y_i \leq \lceil \frac{b}{2} \rceil$ - such $\zeta_i$ can easily be modified to satisfy dominance by Lemma~\ref{lem:domheights} as we will see later. 

We have some remaining set $S$ of $i$ such that $Y_i > \lceil \frac{b}{2} \rceil$. Now, because we have $X_{i+1}\geq Y_i$, at most 1 value of $i\in S$ satisfies $X_i \leq \lceil \frac{b}{2}\rceil$, namely, only the minimal value $i_0$ of $S$. Therefore, for all $i \in S\backslash\{i_0\}$, we have $X_i > \lceil \frac{b}{2}\rceil$. 

For each $i\in S\backslash \{i_0\}$ we break $\varrho_{s_i}$ into a sum of $4$ smaller staircases $\varrho_{s_{i,1}}\dots\varrho_{s_{i,4}}$  by setting $k=2$ in Proposition~\ref{thm:stairgrid}. We also use the same greedy algorithm as before to extract from $\zeta_i^*$ four partitions $\zeta^*_{i,1}\dots\zeta^*_{i,4}$ such that $|\varrho_{s_{i,j}}|-C\leq |\zeta^*_{i,j}|\leq|\varrho_{s_{i,j}}|$ and the $\zeta^*_{i,j}$ use distinct columns of $\zeta^*_i$. 

Because $s_i\leq b$, we have $s_{i,j} \leq \lceil \frac{b}{2} \rceil$. For all $i\in S\backslash \{i+0\}$, $X_i > \lceil \frac{b}{2}\rceil$, and the smallest column in any $\zeta^*_{i,j}$ is certainly at least as large as $X_i$. This means that the $\zeta^*_{i,j}$ now satisfy the other size condition in Lemma~\ref{lem:domheights}; by breaking down our partitions, we have preserved the ``tallness" of the $\zeta^*_i$ in the $\zeta^*_{i,j}$ while decreasing the total size, making them sufficiently ``relatively tall" to apply Lemma~\ref{lem:domheights}.

Now we finish part (1). We will add squares to the small staircases $\zeta^*_i$, $\zeta^*_{i,j}$ while preserving the respective height conditions of Lemma~\ref{lem:domheights}. Let $R$ be the set of partitions consisting of $\zeta_i$ for $i\not \in S$ and $\zeta_{i,j}$ for $i\in S\backslash\{i_0\}$. We know that for each partition in $R$, we can bring its total size to the size of the corresponding staircase $\varrho_{s_i}$ or $\varrho_{s_{i,j}}$ by adding at most $C$ squares. 

We consider $\mu$ as the disjoint union of column-length multi-sets of the partitions in $R$, $\zeta^*_{i_0}$, and the remaining leftover columns. We make block-moves to bring each partition $\zeta^*_i$ or $\zeta^*_{i,j}$ in $R$ to be of size $s_i$ or $s_{i,j}$, while preserving the respective condition from Lemma~\ref{lem:domheights}. Note that because we have $|\zeta^*_i|\leq |\varrho_{s_i}|$ and $|\zeta^*_{i,j}| \leq |\varrho_{s_{i,j}}|$, we can achieve the first goal with all block-moves being the addition of a new block.

We show that we can add new blocks freely without destroying our height conditions. For $i \not\in S$, we need to avoid increasing the height of $\zeta^*_i$, and we do this by adding additional blocks only as new columns of length $1$. The resulting modifications of $\zeta^*_i$ our our desired $\zeta^*_i$.  For $i\in S\backslash\{i_0\}$, we need to avoid decreasing the minimum height of the columns of $\zeta^*_{i,j}$, and we do this by adding blocks only as rows of length $1$, or equivalently increasing the longest column length. The resulting modifications of $\zeta^*_{i,j}$ are our $\zeta_{i,j}$.

We clearly have $|R|\leq 4r-4$. Recall that we are consider $\mu$ as a disjoint union of the partitions of $R$, as well as $\zeta^*_{i_0}$ and some leftovers columns. We claim that by the above procedure we can perform at most $(4r-4)C$ block moves on $\mu$ to modify all partitions $\zeta^*_i,\zeta^*_{i,j}$ in $R$ into $\zeta_i,\zeta_{i,j}$ without affecting $\zeta^*_{i_0}$. This is simple: we repeatedly move a block from the leftovers columns onto $\zeta^*_i$ or $\zeta^*_{i,j}$ in the manner described above. If the leftover column runs out, we instead create a brand new block to add to the partition being modified, so this procedure carries out the desired function.

Now we modify $\zeta^*_{i_0}$. We can add at most $C$ blocks from the remainder of the leftover columns or out of nowhere to $\zeta^*_{i_0}$. This forms $\zeta^{**}_{i_0}$ with $|\zeta^{**}_{i_0}|= |\varrho_{s_{i_0}}|$. By definition of $M$, we can then modify at most $M(s_{i_0})$ blocks in order to reach a partition $\zeta_{i_0}$ appearing in $\varrho_{s_{i_0}}^{\otimes 2}$. Because we assumed 
$$0\leq (\sum\limits_{i} |\varrho_{s_i}|)-|\mu|$$
if we used a leftover-column block whenever possible, we are now completely out of leftover column blocks, which means that our block moves have resulted in only the partitions $\zeta_i,\zeta_{i_0},\zeta_{i,j}$.

In all, we have made at most $(4r-4)+1$ sets of at most $C$ block moves each, as well as an additional $M(s_{i_0})\leq M(b)$. We now let $\widehat{\mu}$ the partition formed from the horizontal sum of the resulting $\zeta_i$, $\zeta_{i,j}$, and $\zeta_{i_0}$. Because we assumed $0\leq \sum\limits_{i} |\varrho_{s_i}|-|\mu|$, we must have consumed all of the leftover squares, so $\widehat{\mu}$ is precisely a horizontal sum of partitions comparable to the staircases $\varrho_{s_i}$. Using the semigroup property first to combine the modifications of $\zeta_{i,j}$ and then to combine all the $\zeta_i$, we have that $\widehat{\mu}$ is a constituent in the horizontal sum $$\hsum_{i=1}^r \varrho_{s_i}.$$  We made at most $(4r-3)C+M(b)$ block moves in transforming $\mu$ into $\widehat{\mu}$. Thus, part (1) is proved. 

The modification to prove part (2) is simple: assuming $X_{i_0} \leq \lceil \frac{b}{2}\rceil$ we go one step further and split $\varrho_{s_{i_0}}$ into 4 staircases, and split $\zeta_{i_0}$ as with the other $\zeta_i$. We now repeat the argument above just to these 4 parts $\zeta_{i_0,j}$: some parts have a small maximum height, and of those that do not, at most 1 fails to have a sufficiently small maximum height after another level of subdivision. The modification of the expression in the conclusion results from the subdivision of the exception case $i_0$ into smaller parts: there are an additional 12 pieces which need $C$ block moves each, but on the other hand the size of the exceptional piece is halved.

\end{proof}

We now get on with the main proof of Theorem~\ref{thm:linearH}. Fix an arbitrary partition $\mu\vdash n$.

Recall from Proposition~\ref{thm:layergrid} that we may write $\varrho_m$ as $$\varrho_m=(\varrho_x\Hplus ((k-1)\subV \varrho_y))\Vplus\left(\vsum_{i=0}^{k-1} \varrho_{z_i}\right)$$
where up to $O(1)$ error, we have $x\approx \frac{(k-1)m}{k},$ $  y\approx z_i\approx \frac{m}{k}$. By Proposition~\ref{thm:layersmooth} we may take $y, z_i$ pairwise differing by at most 1. The idea is to split $\mu$ into a large part corresponding to $\varrho_x$ and smaller parts corresponding to the other terms, then make some minor modifications to each part via block moves and apply the semigroup property. We will set $k=4$, so $x\approx\frac{3m}{4}$ and $  y\approx z_i\approx \frac{m}{4}$

We first split $\mu$ according to its Durfee square. Specifically, we may partition the blocks of $\mu$ into 3 pieces: the Durfee square $D$, the blocks $T_1$ to the right of $D$, and the blocks $T_2$ below $D$. WLOG, we may assume $|T_1| \geq |T_2|$, as otherwise we could conjugate $\mu$ and start over; since the staircases are symmetric, this doesn't affect anything. Let $D$ have side length $d$.

If the columns of $\mu$ are $c_1\geq c_2\geq \cdots\geq c_l$, consider the smallest $j$ such that $\sum_{i=1}^j c_i\geq |\varrho_x|$. Since $|T_1|\geq |T_2|$, we have that $\sum_{i=1}^{\left\lfloor\frac{d}{2}\right\rfloor} c_i \leq \frac{|D|}{2} + |T_2| \leq \frac{1}{2}n<|\varrho_x|\approx \frac{9n}{16}$. So $j> \frac{d}{2}$, and thus $n\geq jc_j > \frac{d}{2} c_j$, yielding $c_j < \frac{2n}{d}$.

Furthermore, if $d^2=|D|\leq \frac{1}{8}n$, then $j> d$, so $c_j\leq d\leq \frac{1}{2\sqrt{2}}\sqrt{n}$ by definition of the Durfee square. So either $c_j\leq \frac{1}{2\sqrt{2}}\sqrt{n}$, or $d>\frac{1}{2\sqrt{2}}\sqrt{n}$ and $c_j < \frac{2n}{d}<4\sqrt{2n}.$ So, $c_j<4\sqrt{2n}$ in either case.

We now split off the region of the partition corresponding to the first $j$ columns. 
Let the portion split off be $\lambda_0$, and let the remaining portion of the partition be $A$. By definition of $j$, we have $0\leq\left(3|\varrho_y|+\sum\limits_{i=0}^{3}|\varrho_{z_i}| \right)-|A| \leq c_j \leq 4\sqrt{2n}$.

To finish, we need to split $A$ into smaller staircase-sized partitions in such a way that makes the total number of block movements needed small. To this end we apply Lemma~\ref{lem:cuttail} with $r=2k-1$, where $s_1=\cdots=s_{k-1}=y$ and $s_{k+i}=z_i$ for $0\leq i\leq k-1$. Here we set $k=4$. Since the largest column involved has size $\mu_1'<4\sqrt{2n}$, we can set $C=4\sqrt{2n}$ and get

$$c\left(\hsum_{i=1}^r \varrho_{s_i},\hsum_{i=1}^r \varrho_{s_i},\widehat{\mu}\right)$$
for some $\widehat{\mu}$ such that $\Delta(A,\widehat{\mu})\leq 37C+M(\lceil\frac{y}{2}\rceil).$ Since $|A|\leq |\widehat{\mu}|$, $\mu=\lambda_0\Hplus A$ can be transformed into $\lambda_0^*\Hplus \widehat{\mu}$ by at most $37C+M(\lceil\frac{y}{2}\rceil)$ block moves. Because blockwise distance between equally sized partitions is simply the number of blocks which need to be moved to go from one to the other, this means that 

$$\tau_n^{\otimes (37C+M(\lceil\frac{y}{2}\rceil))}\otimes (\lambda^*_0\Hplus \widehat{\mu})$$
contains our original partition $\mu$, where $\lambda_0^*$ is some partition of size $|\varrho_x|$. 

By definition, there is a partition $\lambda_0^{**}$ of size $|\varrho_x|$ such that $c(\varrho_x,\varrho_x,\lambda_0^{**})$ and $\Delta(\lambda_0^{*},\lambda_0^{**})\leq M(x)$. So, by subadditivity of blockwise distance, as well as the layer decomposition for $\varrho_m$, we have

$$M(m)\leq M(x) + 37C+M\left(\left\lceil\frac{y}{2}\right\rceil\right)=M\left(\frac{3m}{4}+O(1)\right)+M\left(\frac{m}{8}+O(1)\right)+148m+O(1).$$

as $C=4\sqrt{2n}=4m+O(1)$. It is easy to see by strong induction that this recurrence gives $$M(m)\leq \frac{(148+o(1))}{(1-\frac{3}{4}-\frac{1}{8})}m=(1184+o(1))m.$$

\end{proof}

\subsection{Generalization to All $n$}

We now carry out the comparatively simple procedure of extending Theorem~\ref{thm:linearH} to non-triangular values of $n$. Since staircases can only have triangular sizes, we make a simple modification to the partitions $\varrho_m$ to give them the correct size.

\begin{defn}
The \emph{irregular staircase} $\xi_n$ of size $n=\frac{m(m+1)}{2}+k$, where $0\leq k\leq m$, is the partition $\xi_n=\varrho_m \Hplus 1_k$.
\end{defn}

So, an irregular staircase is a staircase with a trivial representation horizontally added to give it the desired total size. Note that $(m,k)$ are uniquely determined by $n$ in the above.

\begin{cor}\label{cor:close2all}
For all $n$, $\xi_n^{\otimes 2}\otimes \tau^{\otimes (1185+o(1))\sqrt{2n}}$ contains all partitions of $n$.
\end{cor}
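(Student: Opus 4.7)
The plan is to reduce the non-triangular case to Theorem~\ref{thm:linearH} by exploiting the definition $\xi_n = \varrho_m \Hplus 1_k$, where $n = \binom{m+1}{2} + k$ with $0\leq k\leq m$. Since $c(\varrho_m,\varrho_m,\lambda')$ for some $\lambda'\vdash\binom{m+1}{2}$ combined with the trivial fact $c(1_k,1_k,1_k)$ yields, via the semigroup property, $c(\xi_n,\xi_n,\lambda'\Hplus 1_k)$, partitions of the form $\lambda'\Hplus 1_k$ with $\lambda'\in\varrho_m^{\otimes 2}$ are constituents of $\xi_n^{\otimes 2}$. It therefore suffices to show that every $\mu\vdash n$ lies within generalized blockwise distance $(1185+o(1))\sqrt{2n}$ of some such partition.

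Given an arbitrary $\mu\vdash n$, I would first produce $\mu^{*}=\nu\Hplus 1_k$ close to $\mu$: remove $k$ boxes from outer corners of $\mu$ one at a time to form a partition $\nu\vdash\binom{m+1}{2}$, and simultaneously append them to lengthen the first row by $k$. Each step moves one block and keeps the intermediate diagram a valid Young diagram, so $\Delta(\mu,\mu^{*})\leq k$. Next, apply Theorem~\ref{thm:linearH} to $\nu$: there exists $\lambda'\vdash\binom{m+1}{2}$ with $c(\varrho_m,\varrho_m,\lambda')$ and $\Delta(\nu,\lambda')\leq 1184m + O(1)$. Set $\lambda=\lambda'\Hplus 1_k$; by the semigroup property $c(\xi_n,\xi_n,\lambda)$.

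To bound the total distance, use Proposition~\ref{prop:distbound} (subadditivity of $\Delta$ under horizontal sum):
\[
\Delta(\mu^{*},\lambda) = \Delta(\nu\Hplus 1_k,\lambda'\Hplus 1_k) \leq \Delta(\nu,\lambda') + \Delta(1_k,1_k) \leq 1184m + O(1).
\]
Combining with the triangle inequality gives $\Delta(\mu,\lambda)\leq k + 1184m + O(1) \leq 1185m + O(1)$. Since $\binom{m+1}{2}\leq n$ forces $m\leq\sqrt{2n}$, this is $(1185+o(1))\sqrt{2n}$. Equivalently, $\mu$ is a constituent of $\lambda\otimes\tau_n^{\otimes(1185+o(1))\sqrt{2n}}\subseteq \xi_n^{\otimes 2}\otimes\tau_n^{\otimes(1185+o(1))\sqrt{2n}}$ by Pieri's rule, proving the corollary.

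There is no genuine obstacle here: all the machinery (semigroup property, subadditivity of $\Delta$, and the linear bound from Theorem~\ref{thm:linearH}) is already in place. The only point requiring a sentence of care is verifying that the $k$ corner-removals and first-row additions can be interleaved so that every intermediate shape is a valid partition, which follows simply because at each stage the modified diagram still has weakly decreasing rows.
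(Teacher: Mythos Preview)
Your proof is correct and follows essentially the same approach as the paper: reduce to Theorem~\ref{thm:linearH} by moving a few blocks so that $\mu$ becomes $\nu\Hplus 1_k$, apply the semigroup property with $c(1_k,1_k,1_k)$, and combine via subadditivity of $\Delta$ under $\Hplus$ (Proposition~\ref{prop:distbound}) plus the triangle inequality. The only cosmetic difference is that you bound the first step by $k$ moves while the paper uses the looser bound of $m$ moves, but since $k\le m$ both yield $(1185+o(1))m=(1185+o(1))\sqrt{2n}$.
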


\begin{proof}
We use Theorem~\ref{thm:linearH}. Given $n$, take the largest $m$ such that $\binom{m+1}{2}\leq n$. Let $k=n-\binom{m+1}{2}$, so $k\leq m$. Then $\xi_n = \varrho_{m} \Hplus 1_{k}$. We show that an arbitrary partition $\mu$ of $n$ may be transformed into a partition $\widehat{\mu}$ of $n-k$ horizontally summed with $1_k$, using $m$ block-movements. As $k\leq m$, it suffices to transform $\mu$ into a partition with at least $m$ parts of length 1. But this is trivial: repeatedly remove a block from a column of length at least 2, and move the block to create a column of length 1. If we can make $m$ such moves, we are done. If not, when we cannot make any more moves the current partition is a horizontal strip, and we are again done.

Now that this is shown, we have that $\mu$ is contained in $(\widehat{\mu} \Hplus 1_{k}) \otimes \tau_n^{\otimes m}$. By Theorem~\ref{thm:linearH} we have that $\widehat{\mu}$ is contained in $\varrho_m^{\otimes 2}\otimes \tau_{n-k} ^{\otimes (1184+o(1))m}$. Thus, there exists $\nu$ such that $c(\varrho_m,\varrho_m,\nu)$ with blockwise distance $\Delta(\nu,\widehat{\mu}) \leq (1184+o(1))m$. The semigroup property gives 

$$c(\xi_n,\xi_n,\nu+1_{k}).$$

We have $\Delta(\nu+1_{k},\widehat{\mu}+1_{k}) \leq (1184+o(1))m$ by Proposition~\ref{prop:distbound}. Since also $\Delta(\widehat{\mu}+1_{k},\mu)\leq m$ we conclude that $\Delta(\mu,\nu+1_{k})\leq (1185+o(1))m$. Since $\mu$ was arbitrary we have shown that

$$\xi_n^{\otimes 2}\otimes \tau^{\otimes (1185+o(1))m}$$
contains all partitions of $n$.

From $\sqrt{2n}=m+O(1)$ we conclude the desired result.
\end{proof}

\subsection{Replacing the Standard Representations}

\begin{lem}
\label{lem:stdInStair}
For any $\ell>0$, for large $n=\frac{m(m+1)}{2}$, if $\lambda$ is an irreducible representation of $S_n$ such that $\lambda$ is a component of $\tau_n^{\otimes \lfloor \ell m\rfloor}$, then $\lambda$ is a component of $\varrho_m^{\otimes 2}$.
\end{lem}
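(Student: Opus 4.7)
By Pieri's rule, $\lambda$ is a constituent of $\tau_n^{\otimes k}$ if and only if $\lambda_1\geq n-k$; equivalently, the tail $\mu=(\lambda_2,\lambda_3,\ldots)$ satisfies $|\mu|\leq k$. So with $k=\lfloor\ell m\rfloor$, it suffices to show $c(\varrho_m,\varrho_m,\lambda)$ whenever $|\mu|\leq\ell m$.

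The core tool is the identity $\varrho_m=\varrho_{m-1}\Hplus 1^m$, combined with $c(1^m,1^m,1_m)$ (which holds since $(1^m)^{\otimes 2}=1_m$) and the semigroup property (Theorem~\ref{thm:hsum}). These give the following reduction: if $\widetilde{\lambda}:=(\lambda_1-m,\mu_1,\mu_2,\ldots)$ is a valid partition of $\binom{m}{2}$ (which holds when $\lambda_1-m\geq\mu_1$, true for $m\geq 4\ell+1$), then $c(\varrho_{m-1},\varrho_{m-1},\widetilde{\lambda})$ implies $c(\varrho_m,\varrho_m,\lambda)$. I would iterate this reduction $t^*$ times to produce $\lambda^{(t^*)}\vdash\binom{m'+1}{2}$, where $m'=m-t^*$ and the tail $\mu$ is unchanged. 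For $\ell<1$, I would stop at $t^*$ chosen so that $m'=\lceil\ell m\rceil+1>|\mu|$; then by the argument of Proposition~\ref{prop:stdInStair}, $\mu\subseteq\varrho_{m'-1}$, so $\lambda^{(t^*)}\succeq\varrho_{m'}$, and Theorem~\ref{thm:dominance} yields $\lambda^{(t^*)}\in\varrho_{m'}^{\otimes 2}$. Unwinding the iteration via the semigroup property then gives $\lambda\in\varrho_m^{\otimes 2}$.

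The case $\ell\geq 1$, which is the regime needed in the proof of Theorem~\ref{thm:4thpower}, is the main obstacle: the iterative reduction alone cannot close because it would require $m'>\ell m\geq m$. To handle this, I would first decompose $\varrho_m$ via Proposition~\ref{thm:stairgrid} with $k=\lceil\ell\rceil+1$ into $k^2$ smaller staircases $\varrho_{q_{ij}}$ with $q_{ij}\approx m/k$, then correspondingly split $\lambda=\hsum_{i,j}\lambda^{(i,j)}$ so that each piece $\lambda^{(i,j)}$ has size $|\varrho_{q_{ij}}|$ and tail of size at most $|\mu|/k^2+O(1)$. Since the ratio $(\text{tail per piece})/q_{ij}\leq\ell/k+o(1)<1$, each piece falls into the $\ell<1$ case already established, and the semigroup property (together with its vertical corollary) recombines them into $c(\varrho_m,\varrho_m,\lambda)$.

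The subtle step is distributing the tail $\mu$ among the $k^2$ pieces while maintaining partition validity and matching the prescribed sizes. This distribution can be carried out for generic $\mu$ by greedily partitioning $\mu$'s columns into $k^2$ groups of approximately equal total size, with small corrections via additional height-$1$ columns to exactly match each $|\varrho_{q_{ij}}|$ (in the spirit of the construction in Section~\ref{sec:probprf}). It fails, however, for adversarial tails $\mu$ consisting of a single long column (in which case $\lambda$ is a hook) or a single long row (in which case $\lambda$ has only two rows); these edge cases are handled separately by Pak--Panova--Vallejo's and Ikenmeyer's results, which show that all hooks and two-row partitions are contained in every staircase tensor square.
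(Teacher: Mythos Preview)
Your reduction for $\ell<1$ via the identity $\varrho_m=\varrho_{m-1}\Hplus 1^m$ is correct and pleasant, but the $\ell\geq 1$ case has a genuine gap. The assertion that after splitting into $k^2$ pieces one can arrange each tail to have size at most $|\mu|/k^2+O(1)$ is false in general: that bound would require the tallest column of $\mu$ to have height $O(1)$. Greedy column-distribution only guarantees max bin $\leq |\mu|/k^2+\mu'_1$, and $\mu'_1$ can be on the order of $m$. Your listed exceptions (hook, two-row) do not exhaust the bad cases. For a concrete obstruction, take $\ell>1$, $k=\lceil\ell\rceil+1$, and let $\mu$ consist of $k^2+1$ equal columns each of height $|\mu|/(k^2+1)$. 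Then some piece must receive two columns, giving it a tail of size $\approx 2|\mu|/k^2$ and effective ratio $\approx 2\ell/k>1$; that piece is neither a hook nor a two-row partition, so none of your cases applies. One can cook up similar examples with any bounded number of tall columns, so a single round of $O(1)$-piece decomposition cannot close the argument.

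The paper's proof avoids this by taking $k$ to grow with $m$: it uses a layer decomposition (Proposition~\ref{thm:layergrid}) with $k\asymp\sqrt{m}$, producing $\Theta(\sqrt{m})$ small flakes each of area $\Theta(m)$ alongside one large core of side $m-O(\sqrt{m})$. The $\Theta(\sqrt{m})$ tallest columns of $\lambda$ are each packaged, together with enough height-$1$ filler, as a hook and assigned to one flake (hooks lie in every staircase tensor square). After peeling these off, a pigeonhole bound forces every remaining column to have height $O(\sqrt{m})$, so the leftover partition is dominated by the core staircase via Lemma~\ref{lem:domheights}. The key difference is that the paper spends one flake per tall column rather than trying to batch the tail into $O(1)$ groups; this sidesteps exactly the indivisibility problem that breaks your argument.
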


\begin{proof}
The irreducible components of $\tau_n^{\otimes\lfloor \ell m\rfloor}$ are precisely those that correspond to partitions $\lambda$ of $n$ whose blockwise distance from $1_n$ is at most $\lfloor \ell m\rfloor$.

We use Proposition~\ref{thm:layergrid} with $k=\ell_1 m^{\frac{1}{2}}$ for $\ell_1$ depending on $\ell$. This decomposes $\varrho_m$ into a large piece of side length about $m-\frac{m^{\frac{1}{2}}}{\ell_1}$, as well as about $2k-1$ small pieces of side length $\frac{m^{\frac{1}{2}}}{\ell_1}$ and area $\frac{m}{2(\ell_1)^2}$. 

Each column of $\lambda$ has size at most $\ell m+1$, and there are at least $n-2\ell m$ columns of height $1$. So, as long as $\frac{1}{\ell_1^2}>2\ell$, we can use each of the $2k-1$ small staircase pieces to cover, in their tensor square, a hook consisting of one of the $2k-1$ tallest columns along with part of the first row (chosen to give the correct total size). By an easy induction using the semigroup property (see \cite{IkenDom}), hooks are contained in the tensor square of staircases.

The remaining columns are necessarily very short: letting $c_1\geq c_2\geq \dots$ be the column lengths, we have $(2k)(c_{2k})\leq \sum\limits_{i=1}^{2k} c_i\leq \ell m+2k=O(m).$ Therefore, the remaining columns are all of size $O(m^{\frac{1}{2}})$. By Lemma~\ref{lem:domheights}, we conclude that this remaining large part of $\lambda$ is dominance comparable to the corresponding-size staircase. The semigroup property now yields the lemma.

\end{proof}

We can also extend Lemma~\ref{lem:stdInStair} to irregular staircases.
\begin{lem}
\label{lem:stdInIrrStair}
For any $\ell>0$, for large enough $n$, if $\lambda$ is an irreducible representation of $S_n$ such that $\lambda$ is a component of $\tau_n^{\otimes \lfloor \ell \sqrt{n}\rfloor}$, then $\lambda$ is a component of $\xi_n^{\otimes 2}$, the tensor square of the irregular staircase.
\end{lem}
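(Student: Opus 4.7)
The plan is to reduce to Lemma~\ref{lem:stdInStair} by splitting $\lambda$ as $\lambda' \Hplus (k)$, where $(k)$ is a single row that matches the $1_k$ factor in $\xi_n = \varrho_m \Hplus 1_k$, and $\lambda'\vdash \binom{m+1}{2}$ matches $\varrho_m^{\otimes 2}$. The semigroup property then assembles these pieces into $c(\xi_n,\xi_n,\lambda)$.

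First I would observe that since $\lambda$ is a component of $\tau_n^{\otimes \lfloor \ell\sqrt{n}\rfloor}$, the generalized blockwise distance satisfies $\Delta(\lambda, 1_n) \leq \lfloor \ell\sqrt{n}\rfloor$. In particular, the first row satisfies $\lambda_1 \geq n - \lfloor \ell\sqrt{n}\rfloor$ while $\lambda_2 \leq \lfloor \ell\sqrt{n}\rfloor$. Since $k\leq m$ and $m = O(\sqrt{n})$, for all sufficiently large $n$ we have $\lambda_1 - \lambda_2 > k$, so that
$$\lambda' := (\lambda_1 - k, \lambda_2, \lambda_3, \ldots)$$
is a valid partition of $n-k = \binom{m+1}{2}$.

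Next, observe that applying the same sequence of block moves witnessing $\Delta(\lambda,1_n)$ to $\lambda'$ (now relative to $(n-k) = 1_{n-k}$) shows $\Delta(\lambda', 1_{n-k}) \leq \lfloor \ell\sqrt{n}\rfloor$; we removed exactly $k$ boxes from the first row of both $\lambda$ and $1_n$. Because $n \leq \binom{m+2}{2}$ gives $\sqrt{n} \leq (1+o(1))\frac{m}{\sqrt{2}}$, there exists $\ell' > 0$ depending only on $\ell$ such that $\lfloor \ell\sqrt{n}\rfloor \leq \lfloor \ell' m\rfloor$ for all large $m$. Hence $\lambda'$ is a component of $\tau_{n-k}^{\otimes \lfloor \ell' m \rfloor}$, and Lemma~\ref{lem:stdInStair} (applied with $\ell'$) yields $c(\varrho_m, \varrho_m, \lambda')$.

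Finally, we have the trivial constituency $c((k),(k),(k))$, since $1_k \otimes 1_k = 1_k$. The semigroup property (Theorem~\ref{thm:hsum}) then gives
$$c\bigl(\varrho_m \Hplus (k),\ \varrho_m \Hplus (k),\ \lambda' \Hplus (k)\bigr) = c(\xi_n, \xi_n, \lambda),$$
since $\xi_n = \varrho_m \Hplus 1_k$ and, by construction, $\lambda' \Hplus (k) = \lambda$. The only genuine obstacle is the technical point that $\lambda$ has a long enough first row to split off a piece of length $k$; this is immediate from $k \leq m \lesssim \sqrt{2n}$ against $\lambda_1 \geq n - O(\sqrt{n})$, which holds for $n$ sufficiently large in terms of $\ell$.
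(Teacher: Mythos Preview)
Your proof is correct and follows essentially the same approach as the paper: split off $k$ boxes from the first row of $\lambda$ (equivalently, the paper phrases this as removing $k$ columns of height~$1$), apply Lemma~\ref{lem:stdInStair} to the remaining partition of $\binom{m+1}{2}$, and reassemble via the semigroup property with $c(1_k,1_k,1_k)$. The only cosmetic difference is that the paper verifies the splitting is possible by counting columns of height~$1$ (at least $n-2\ell\sqrt n$ of them) rather than comparing $\lambda_1-\lambda_2$ to $k$, but these are the same observation.
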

\begin{proof}

The irreducible components of $\tau_n^{\otimes \lfloor \ell \sqrt{n}\rfloor}$ are precisely those that correspond to partitions $\lambda$ of $n$ whose blockwise distance from $1_n$ is at most $\lfloor \ell \sqrt{n}\rfloor$. So, there are at least $n-2\ell \sqrt{n}$ columns of height $1$. Again take the largest $m$ such that $\binom{m+1}{2}\leq n$, and let $k=n-\binom{m+1}{2}$. For large enough $n$, we can remove $k$ columns of size $1$ and leave a partition $\lambda_0$ of $n_0=\binom{m+1}{2}$ that has distance at most $\ell\sqrt{n}\leq (1.1)\ell \sqrt{n_0}$ from the trivial representation of size $n_0$. By Lemma~\ref{lem:stdInStair}, for large enough $n$, all such $\lambda$ are contained in $\varrho_m^{\otimes 2}$, and so by the semigroup property, after adding the trivial representation back, we must have $\xi_n^{\otimes 2}$ contains $\lambda$.
\end{proof}

We can finally prove Theorem~\ref{thm:4thpower} on tensor fourth powers, with an explicit choice $\lambda=\xi_n$.

\begin{repthm}{thm:4thpower}
For sufficiently large $n$, the tensor fourth power $\xi_n^{\otimes 4}$ contains all partitions of $n$.
\end{repthm}

\begin{proof}

Corollary~\ref{cor:close2all} implies that for sufficiently large $n$, $\xi_n^{\otimes 2}\otimes \tau_n^{\otimes \lfloor 1186\sqrt{2n}\rfloor}$ contains all partitions of $n$. So for any $\mu\vdash n$, $\xi_n^{\otimes 2}\otimes \nu$ contains $\mu$ for some $\nu$ contained in $\tau_n^{\otimes \lfloor 1186\sqrt{2n}\rfloor}$. Lemma~\ref{lem:stdInIrrStair} thus implies that $\mu$ is contained in $\xi_n^{\otimes 4}$, as claimed.

\end{proof}

\section{Concluding Remarks}

We have shown that staircase tensor squares $\varrho_m^{\otimes 2}$ contain almost all partitions in 2 natural probability distributions, and that there are partitions $\lambda\vdash n$ for all large $n$ such that $\lambda^{\otimes 4}$ contains all partitions of $n$.

\begin{rem}
The argument used to show that tensor 4th powers contain every representation proceeds by showing first that every partition $\lambda$ is near another partition contained in the tensor square $\varrho^{\otimes 2}$, and then shows that the nearness can be encompassed in another $\varrho^{\otimes 2}$ factor. To prove the full tensor square conjecture using our semigroup methods, one would need to remove all of the standard representations, which means each semigroup property application would need to be exactly correct. As a result, improving the exponent from 4 to 2 seems more difficult. Intuitively, our value $4$ is really $ 2\lceil 1+\varepsilon \rceil $.
\end{rem}

\begin{rem}

Our results in this paper focused on the staircase partitions, but many of the arguments can be adapted for partitions which can be similarly broken down into staircase pieces. For instance, the caret partition $\gamma_k$ mentioned in the introduction may be expressed as $\gamma_k=(\varrho_{2k}\Hplus\varrho_{k-1})\Vplus\varrho_{k-1}$. By breaking down the $\varrho_{2k}$ into 4 pieces, we have broken $\gamma_k$ into 6 approximately equal staircases. As a result, the proof of Theorem~\ref{thm:planch} on Plancherel-random partitions applies to $\gamma_k^{\otimes 2}$ as well. 

\end{rem}

\begin{rem}
\label{rem:recthard}
Intuitively, the rectangular Young diagrams should be the most difficult to deal with using the semigroup property: if $\lambda$ is rectangular and $\lambda=\lambda_1\Hplus\lambda_2$, both $\lambda_1$ and $\lambda_2$ must be rectangular. Therefore, we have very little freedom in applying the semigroup property. It is, however, easy to show that rectangles are constituents of the tensor \textit{cubes} $\varrho_m^{\otimes 3}$; see Appendix~\ref{subsec:rectcube} for details. Further exploration could yield more insight on how difficult general partitions are to fit into a tensor cube.
\end{rem}

\begin{rem}

The question of whether the semigroup property combined with combinatorial arguments involving dominance and symmetry suffices to prove the full tensor square conjecture remains open. Regardless, checking for Kronecker coefficient positivity inductively using the semigroup property seems much faster than directly computing Kronecker coefficients. Of course, the semigroup property may fail to detect positive Kronecker coefficients.

\end{rem}

\begin{rem}
\label{rem:bash}
We have, using a computer to implement the semigroup property in conjunction with Theorem~\ref{thm:dominance} on dominance ordering, verified the Saxl conjecture up to $\varrho_9$. These two facts suffice for all cases except the 6 by 6 square in $\varrho_8^{\otimes 2}$. This case follows from the semigroup property using the additional fact that $c(\lambda,\lambda,\lambda)$ holds for every symmetric partition $\lambda$ (\cite{symcube}), but our construction seems rather ad-hoc. We explain it here using some helpful visuals. In the diagram (below), each color corresponds to an application of the semigroup property, beginning with the red squares (which satisfy constituency by the theorem mentioned above). Thus, the 3 depicted partitions yield a positive Kronecker coefficient. 

\begin{figure}[h]
\label{fig:6x6step1}
\caption{A Triple with Positive Kronecker Coefficient}
\includegraphics[scale=0.8]{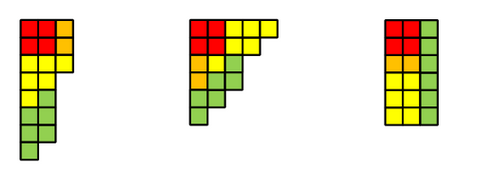} 
\end{figure}

We now add the rectangle to itself, and add the other 2 partitions to each other, giving $c(\varrho_8,\varrho_8,\mu)$ for $\mu$ the 6 by 6 square.

\comment{\begin{figure}[h]
\label{fig:6x6step2}
\caption{Permuting and adding the previous triple to itself proves that this triple also has a positive Kronecker coefficient.}
\includegraphics[scale=0.8]{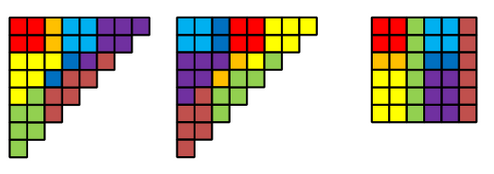} 
\end{figure}}

\end{rem}

\section{Acknowledgements}

The authors thank their MIT SPUR 2015 mentor, Dongkwan Kim, for his valuable feedback and advice. They also thank Nathan Harman for suggesting this problem, and professors Alexei Borodin, David Jerison, and Ankur Moitra for their helpful input and advice. They thank Mitchell Lee for help with editing, and finally thank the MIT SPUR program for the opportunity to conduct this research project.

\appendix \section{Appendix: Technical Lemma on $\beta$-Sum Flexibility}
\normalsize
\subsection{Overview of the Proof}
\label{subsec:bessoverview}

We conclude by proving the key technical result used in the proof of Theorem~\ref{thm:planch}. This result is restated below:
\begin{repthm}{thm:bess}
For any $\beta>0$, let $P(n,\beta)$ denote the probability that a (Plancherel) random partition of $n$ is $\beta$-sum flexible. Then we have
\[\lim_{n\rightarrow\infty} P(n,\beta) = 1\] for all $\beta$.
\end{repthm}

We will estimate the typical value for the maximum height among the smallest $n^{\alpha}$ columns, for $\frac{1}{6} < \alpha \leq \frac{1}{2}$. These bounds will enable us to conclude Theorem~\ref{thm:bess}. We first recall from, e.g., \cite{Borodin} that, for a random partition of $n$, the number $\lambda_1-\lambda_2$ of columns of size $1$ is $\Theta(n^{1/6})$ with high probability. In fact, $\frac{\lambda_1-\lambda_2}{n^{1/6}}$ converges weakly to a non-trivial limiting density.

This result tells us that for any $\epsilon$ there exists $\delta$ such that, for large $n$, we will have at least $\delta n^{1/6}$ parts of size 1 with probability at least $1-\epsilon$. This means that the condition in Theorem~\ref{thm:bess} is safe for all columns of size at most $\delta n^{1/6}$. If the sum of all these columns is of size $\Theta({n^{\alpha}})$, the condition in Theorem~\ref{thm:bess} is now safe for all columns with at most $c n^{\alpha}$ size, for some c. Our plan is to ``bootstrap" in this manner up to $\Theta({n^{1/2}})$. If we can achieve the condition up to this point, we will have proved (4), because for pieces of size $\Theta({n^{1/2}})$ the limit shape easily implies Theorem~\ref{thm:bess}. 

We actually estimate a closely related quantity, for which explicit formulae exist. For a partition $\lambda$, we follow \cite{Borodin} in denoting by $D(\lambda)$ the set $\{\lambda_i - i\}$. We will also work with poissonized Plancherel measure $M^{\theta}$ instead of ordinary Plancherel measure $M_n$. At the end, we will depoissonize to recover information about the measures $M_n$. The use of Poissonized Plancherel measure is that useful exact formulae describe the behavior of a random partition.

\begin{defn}
For $\theta \in \mathbb{R}^+$, the Poissonized Plancherel measure $M^{\theta}$ is a probability distribution over \textit{all} partitions $\lambda$, with $$M^{\theta}(\lambda)=e^{-\theta}\theta^{|\lambda|}\left(\frac{\dim(\lambda)}{|\lambda|!}\right)^2.$$
\end{defn}

Conceptually, we pick a $M^{\theta}$ partition by first taking $n$ to be Poisson with mean $\theta$, and then picking a Plancherel-random $\lambda\vdash n$.

The formulas to follow will allow us to estimate the mean and variance $\mu_{\theta, w}, \sigma_{\theta,w}^2$ of $T(\lambda,w)=|D(\lambda) \cap \mathbb{Z}_{\geq w}|$, where $\lambda$ is taken from poissonized Plancherel measure with mean $\theta$. Because the set $D(\lambda)$ corresponds to the vertical border edges of the Young diagram, this approximately gives the height of the $w$th smallest column. By setting $w=2\sqrt{n}-Kn^{\alpha}$, knowledge of these quantities will allow us to understand roughly the value of the maximum-size column among the first $\Theta(n^{\alpha})$. 

We now list the formulas and bounds we will use for these computations.

\subsection{Formulas for Poissonized Plancherel Measure}
\label{subsec:bess}

\begin{defn}
\label{defn:bess}
The \textit{Bessel function of the first kind} $J_{\nu}$ is defined as
$$J_{\nu}(x)=\sum\limits_{m=0}^{\infty} \frac{(-1)^m (\frac{x}{2})^{2m+\nu}}{(m!)\Gamma({m+\nu+1})}.$$
\end{defn}

\begin{defn}
\label{defn:airy}
The \textit{Airy function} $Ai$ is defined as
$$Ai(x)=\frac{1}{\pi} \int_{0}^{\infty} \cos\left(\frac{u^3}{3}+xu\right)du.$$
\end{defn}

\begin{defn}
\label{defn:descents}
For a partition $\lambda=(\lambda_1,\lambda_2,\dots$, define $D(\lambda)$ to be the set $\{\lambda_i-i\}$.
\end{defn}

\begin{defn}
Define the function $\textbf J(x,y;\theta)$ as 

$$\textbf{J}(x,y;\theta)=\sum\limits_{s=1}^{\infty}  J_{x+s}(2\sqrt{\theta})J_{y+s}(2\sqrt{\theta}).$$
\end{defn}

\begin{lem} [{\cite[Thm~2, Prop~2.9]{Borodin}}]
\label{bess:formula}
Let $\lambda$ be chosen according to $M^{\theta}$ . Then for any finite $X=\{x_1,x_2\dots x_s\} \subseteq \mathbb{Z}$, the probability that $D(\lambda) \supseteq X$ is

$$ M^{\theta}\left(\{\lambda | D(\lambda) \supseteq X\}\right)=\det\left[\textbf{J}(x_i,x_j,\theta)\right]_{1\leq i,j \leq s}.$$
\end{lem}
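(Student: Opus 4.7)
The plan is to recognize $M^{\theta}$ as an instance of Okounkov's Schur measure, deduce that the point configuration $D(\lambda)$ is a determinantal point process on $\mathbb{Z}$, and finally identify the correlation kernel with the discrete Bessel kernel $\mathbf{J}(x,y;\theta)$.

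First I would recast the measure in Schur-function language. Under the Plancherel specialization $\pi_{\theta}$ of the ring of symmetric functions, characterized by $p_1 \mapsto \sqrt{\theta}$ and $p_k \mapsto 0$ for $k \geq 2$, one has $\pi_{\theta}(s_{\lambda}) = \theta^{|\lambda|/2}\dim(\lambda)/|\lambda|!$, so that $M^{\theta}(\lambda) = e^{-\theta}\pi_{\theta}(s_{\lambda})^{2}$. Pushing the Cauchy identity $\sum_{\lambda}s_{\lambda}(x)s_{\lambda}(y)=\prod(1-x_{i}y_{j})^{-1}$ through this specialization in both sets of variables yields the correct total mass, $\sum_{\lambda}\pi_{\theta}(s_{\lambda})^{2}=e^{\theta}$, confirming that $M^{\theta}$ is a probability measure and exhibiting it as a Schur measure with both specializations equal to $\pi_{\theta}$.

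Next I would invoke the Borodin--Okounkov--Olshanski theorem that every Schur measure induces a determinantal point process on $\mathbb{Z}$ (up to a standard half-integer shift) via the bijection $\lambda \leftrightarrow D(\lambda)$, with correlation kernel representable as a double contour integral involving $\Phi(z)=\exp(\sqrt{\theta}z)$ for the Plancherel specialization. Applying the Jacobi--Anger expansion
\[
e^{\sqrt{\theta}(z-1/z)} \;=\; \sum_{n\in\mathbb{Z}} J_{n}(2\sqrt{\theta})\,z^{n},
\]
the contour integral collapses into a convolution of Bessel coefficients which, after tracking conventions, equals $\sum_{s\geq 1}J_{x+s}(2\sqrt{\theta})J_{y+s}(2\sqrt{\theta})=\mathbf{J}(x,y;\theta)$. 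The defining property of a determinantal process then gives $M^{\theta}(\{\lambda\,:\,D(\lambda)\supseteq X\})=\det[\mathbf{J}(x_{i},x_{j};\theta)]_{1\leq i,j\leq s}$, which is the claim.

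The main obstacle is establishing the determinantal structure itself from first principles. This requires expressing $\pi_{\theta}(s_{\lambda})$ as an ``infinite Vandermonde'' in the coordinates $\{\lambda_{i}-i\}$ (via the Jacobi--Trudi identity, or equivalently via the fermionic Fock space and the boson--fermion correspondence), and then using Cauchy--Binet to rewrite the sum $\sum_{\lambda\,:\,D(\lambda)\supseteq X}\pi_{\theta}(s_{\lambda})^{2}$ as a minor of an infinite matrix, finally identifying that matrix's entries with $\mathbf{J}(\,\cdot\,,\,\cdot\,;\theta)$ by the generating function above. Conventions on half-integer shifts of $D(\lambda)$ and the ``filled Dirac sea'' (which produces the $s\geq 1$ truncation in the Bessel sum) must be tracked carefully to match the statement exactly; modulo this bookkeeping, the Jacobi--Anger identification of the kernel is the central computational step.
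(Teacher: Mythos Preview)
The paper does not prove this lemma at all: it is quoted verbatim from \cite{Borodin} (Theorem~2 and Proposition~2.9 there) as one of several imported formulas in Section~\ref{subsec:bess}, with no argument supplied. Your outline is a faithful sketch of the standard proof in that reference---realize $M^{\theta}$ as the Schur measure under the Plancherel specialization, invoke the general fact that Schur measures give determinantal point processes on $\mathbb{Z}$ via $\lambda\mapsto D(\lambda)$, and identify the correlation kernel with the discrete Bessel kernel through the Jacobi--Anger expansion---so there is nothing to compare against in the present paper beyond noting that the authors simply cite the result rather than reprove it.
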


 \begin{lem} [{\cite{bess}}]
 \label{bess:bound}
 $J_{\nu}(x) \leq x^{-1/3}$ and 
 $J_{\nu}(x) \leq \nu^{-1/3}$.
 \end{lem}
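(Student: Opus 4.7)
The plan is to prove these as instances of the classical Landau-type bound on $J_\nu$ using the integral representation together with stationary phase and an Airy-integral estimate. I would start from the Bessel representation
\[
J_\nu(x) \;=\; \frac{1}{\pi}\int_0^\pi \cos\bigl(x\sin\theta - \nu\theta\bigr)\, d\theta \;-\; \frac{\sin(\nu\pi)}{\pi}\int_0^\infty e^{-\nu t - x\sinh t}\, dt.
\]
The second piece is manifestly $O(\nu^{-1})$ when $\nu \geq 1$ and $O(x^{-1})$ when $x \geq 1$ by a direct monotone bound (and it vanishes for integer $\nu$), so it is absorbed into both inequalities with room to spare. All the work is in bounding the oscillatory integral, whose phase $\varphi(\theta)=x\sin\theta-\nu\theta$ has stationary points at $\cos\theta=\nu/x$.

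For the bound $J_\nu(x) \leq C x^{-1/3}$ I would split into three regimes in the parameter $r := \nu/x$. When $r \leq 1 - \delta$ the phase has two non-degenerate stationary points $\pm\theta_0$ with $|\varphi''(\theta_0)| \asymp x\sin\theta_0$, and the standard stationary phase lemma gives a contribution of order $x^{-1/2}$, comfortably better than $x^{-1/3}$. When $r \geq 1 + \delta$ there is no real stationary point; shifting the contour of integration downward in a strip where $\mathrm{Im}\,\varphi > 0$ produces exponential decay, again beating $x^{-1/3}$. The essential case is the transitional regime $|r-1| \leq \delta$, where the two stationary points coalesce at $\theta = 0$ and $\varphi$ becomes cubic: writing $\varphi(\theta) = -\tfrac{x}{6}\theta^3 + (x-\nu)\theta + O(x\theta^5)$ and rescaling $\theta = (2/x)^{1/3}\tau$ reduces the integral to the Airy integral $\mathrm{Ai}$ evaluated at a bounded argument, yielding $O(x^{-1/3})$. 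The companion bound $J_\nu(x)\leq C \nu^{-1/3}$ follows by the same trichotomy with the roles of $x$ and $\nu$ interchanged; in the critical coalescence regime the two parameters differ only by $O(x^{2/3})$, so one may estimate with either.

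The main obstacle is controlling the Airy regime with a clean uniform constant: one has to show that the error terms from truncating the Taylor expansion of $\sin\theta$, from extending the Airy integral to infinity, and from the non-integer-$\nu$ correction term all combine into a single universal constant $C$ (and, if one really wants $C=1$ as literally written in the statement, one must do the sharper Landau computation, which involves the extremal value of $x^{1/3}J_0(x)$). For the uses made in the remainder of the paper only a universal $C$ is needed, since the bound is invoked inside sums of the form $\sum_s J_{x+s}(2\sqrt{\theta})J_{y+s}(2\sqrt{\theta})$ where the exponent $-1/3$ is what matters. So operationally the proof reduces to a three-regime stationary-phase estimate plus a Taylor expansion at the turning point $\nu = x$, with the Airy reduction being the conceptually delicate step.
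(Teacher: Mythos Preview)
The paper gives no proof of this lemma; it is quoted verbatim as a known result from the cited reference \cite{bess} (Landau's uniform bounds on Bessel functions), so there is nothing in the paper to compare your argument against.

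Your sketch is the standard route to these inequalities and is broadly sound as an outline: the trichotomy in $r=\nu/x$ with non-stationary, nondegenerate-stationary, and Airy-coalescence regimes is exactly how such uniform bounds are obtained. Two comments. First, what you have written is a plan, not a proof: the uniform-in-parameter stationary phase with coalescing critical points is genuinely delicate, and turning ``rescale to an Airy integral plus error terms'' into a rigorous bound with an explicit constant requires either the van der Corput third-derivative lemma or a careful uniform Airy asymptotic (as in Olver's treatment), neither of which you have carried out. Second, the lemma as stated really does assert constant $1$, and your parenthetical is right that this is not free: Landau's paper shows the sharp constants are approximately $0.786$ for the $x^{-1/3}$ bound and $0.675$ for the $\nu^{-1/3}$ bound, which is what makes the constant-$1$ statement true. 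Your stationary-phase sketch would only deliver some unspecified $C$, which, as you note, is all the present paper actually needs.
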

 
 \begin{lem} [{\cite[Lemma~4.4]{Borodin}}]
 \label{bess:airy} For $x \in \mathbb{R}$ we have $$\left|n^{1/6} J_{2n^{1/2}+xn^{1/6}}(2n^{1/2})-Ai(x)\right| = O\left(n^{-1/6}\right), \hspace{5 mm} n \rightarrow \infty$$ where additionally the implicit constant in $O(n^{-1/6})$ is uniform for $x$ in any compact set.
 \end{lem}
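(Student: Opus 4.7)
The plan is to establish this as the standard transitional (Airy-type) asymptotic for $J_\nu$ in the regime where the order and argument coalesce, derived via saddle-point analysis of a Bessel integral representation. Write
\[
J_\nu(2\sqrt{n}) = \frac{1}{2\pi}\int_{-\pi}^{\pi} e^{i(2\sqrt{n}\sin\theta - \nu\theta)}\,d\theta + R_\nu,
\]
with $\nu = 2\sqrt{n} + xn^{1/6}$ (and $R_\nu$ a correction term that vanishes when $\nu$ is an integer, arising from the full Schl\"afli contour formula, which bounds it uniformly by a harmless $O(e^{-cn^{1/2}})$ for $x$ in a compact set). In the regime $\nu \sim 2\sqrt{n}$, the two real saddles of the phase $\phi(\theta) = 2\sqrt{n}\sin\theta - \nu\theta$ coalesce at $\theta = 0$, so the integral is of Airy type.

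The steps I would carry out are: (1) expand $\phi(\theta) = -xn^{1/6}\theta - \tfrac{\sqrt{n}}{3}\theta^3 + O(\sqrt{n}\,\theta^5)$ around $\theta = 0$; (2) rescale $\theta = t\,n^{-1/6}$, which converts the cubic term to $-t^3/3$ and the linear term to $-xt$, with Jacobian $n^{-1/6}$; (3) write the integral as
\[
J_\nu(2\sqrt{n}) = \frac{n^{-1/6}}{2\pi}\int_{-\pi n^{1/6}}^{\pi n^{1/6}} e^{-i(t^3/3 + xt)}\bigl(1 + E_n(t)\bigr)\,dt,
\]
where $E_n(t) = O(\sqrt{n}\cdot(tn^{-1/6})^5) = O(n^{-1/3} t^5)$ captures the discarded higher-order Taylor terms; (4) recognize $\frac{1}{2\pi}\int e^{-i(t^3/3+xt)}\,dt = Ai(x)$ (using the symmetry of the Airy integral, equivalent to Definition~\ref{defn:airy}); (5) bound the tail of the $t$-integral (outside a window $|t|\le n^{\varepsilon}$) by a standard integration-by-parts/van der Corput estimate on the cubic phase, giving $O(n^{-1/6})$; and (6) bound the contribution of $E_n(t)$ over the central window by $O(n^{-1/3})$ using the Gaussian-like decay that comes from deforming the $t$-contour into the complex plane along the standard Airy steepest-descent rays (rotating by $\pm\pi/6$), where $e^{-it^3/3}$ decays like $e^{-|t|^3\sin(\pi/2)/3}$ and absorbs the $t^5$ factor. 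Multiplying by $n^{1/6}$ gives the claim with error $O(n^{-1/6})$, uniformly for $x$ in any compact set since every constant above depends only on the compact range of $x$.

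The main obstacle is Step (6): ensuring the remainder $E_n(t)$ does not destroy the error bound when integrated against an oscillatory kernel. Naively, $\int |E_n(t)|\,dt$ diverges, so one must justify a contour deformation that converts oscillation into exponential decay uniformly in $x$ on compacts. This requires verifying that the relevant deformation stays inside the domain of analyticity of the integrand $e^{i\phi(\theta)}$ (which is entire in $\theta$) and that the endpoint corrections from deforming $[-\pi,\pi]$ off the real axis are exponentially small — a routine but delicate bookkeeping of steepest-descent contours. Once that is in place, Steps (1)--(5) are essentially mechanical Taylor expansions, and the uniformity in $x$ on compacta follows automatically from the continuous dependence of all bounds on the compact parameter.
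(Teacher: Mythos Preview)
The paper does not prove this lemma at all: it is quoted verbatim as \cite[Lemma~4.4]{Borodin} and used as a black box in Section~\ref{subsec:bess}. So there is no ``paper's own proof'' to compare against; the authors simply import the result.

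Your sketch is the standard derivation of the Bessel--Airy transition asymptotic via stationary phase on the Schl\"afli integral, and it is essentially correct as outlined. The phase expansion, the rescaling $\theta = t n^{-1/6}$, and the identification of the leading term with $Ai(x)$ are all right, and your diagnosis of Step~(6) as the genuinely nontrivial point is accurate: one must deform the contour to the steepest-descent rays $\arg t = \pm \pi/6$ to convert the cubic oscillation into decay before the polynomial remainder $E_n(t)$ can be integrated. This is exactly the route taken in the Borodin--Okounkov--Olshanski argument that the paper cites (and in Olver's classical treatment). One small point worth tightening: the error term $O(\sqrt{n}\,\theta^5)$ in the phase exponentiates to a multiplicative $(1+E_n(t))$ only after you have already localized to $|\theta|$ small (say $|\theta|\le n^{-1/6+\varepsilon}$), so the order of operations should be: first cut to a shrinking $\theta$-window using the non-stationary phase estimate away from $0$, then expand and rescale inside that window. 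As written your Steps~(3) and~(5) slightly conflate these, but the fix is cosmetic.
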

 
 \begin{lem} [{\cite[Prop~4.3]{Borodin}}]
For fixed $a \in \mathbb{R}$,
 \[\lim_{n\rightarrow \infty}
\left(\sum\limits_{k=2\sqrt{n}+an^{\frac{1}{6}}}^{\infty} J(k,k;n)\right) = \int_0^{\infty} t\left(Ai(a+t)\right)^2 dt. \]
\end{lem}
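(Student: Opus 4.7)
The plan is to reduce the double sum to a one-dimensional sum of squared Bessel functions, apply the Bessel-to-Airy asymptotic (Lemma~\ref{bess:airy}) as a Riemann sum, and then control the tail using uniform exponential decay of $J_\nu$ in the region $\nu > 2\sqrt n$.

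First, I would switch the order of summation. Setting $m = k+s$ with $s\ge 1$, each $m>N:=\lceil 2\sqrt n + an^{1/6}\rceil$ arises from exactly $m-N$ pairs $(k,s)$, so
\[ \sum_{k\ge N}\textbf{J}(k,k;n)\;=\;\sum_{m>N}(m-N)\,J_m(2\sqrt n)^2. \]
Next, I would rescale via $m = 2\sqrt n + x n^{1/6}$, so consecutive $m$ correspond to spacing $\Delta x = n^{-1/6}$. By Lemma~\ref{bess:airy}, $n^{1/3}J_m(2\sqrt n)^2 = Ai(x)^2 + O(n^{-1/6})$ uniformly for $x$ in any compact set, and $m - N = (x-a)n^{1/6} + O(1)$. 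For any fixed $L > a$, the truncated piece of the sum over $m$ with $x_m \in [a,L]$ is a Riemann sum approximating
\[ \int_a^L (x-a)\,Ai(x)^2\,dx \;=\; \int_0^{L-a} t\,Ai(a+t)^2\,dt, \]
with error tending to $0$ as $n \to \infty$ (the scaling factors $n^{1/6}\cdot n^{-1/3}\cdot n^{1/6}=1$ conspire correctly).

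The main obstacle is justifying that the tail $x \ge L$ contributes negligibly, uniformly in $n$. The target integral converges since $Ai(x) = O(\exp(-\tfrac{2}{3}x^{3/2}))$ as $x\to+\infty$, so $\int_L^\infty (x-a)\,Ai(x)^2\,dx \to 0$ as $L\to\infty$. On the sum side, for $m \gg 2\sqrt n$ the Bessel function decays exponentially: the uniform Debye asymptotic for $J_\nu(x)$ with $\nu > x$ yields a bound of the form
\[ |J_m(2\sqrt n)| \;\le\; C\exp\!\left(-c\,\frac{(m-2\sqrt n)^{3/2}}{n^{1/4}}\right) \]
valid in the transition range above $L n^{1/6}$, matching the Airy decay rate under the rescaling and providing a uniform-in-$n$ dominant for the tail. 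With this in hand, the argument is: truncate at $L$, pass $n\to\infty$ via the Riemann sum on $[a,L]$, then send $L\to\infty$. The technical heart is the uniform exponential tail estimate for Bessel functions; the cruder bound $J_\nu(x)\le\nu^{-1/3}$ from Lemma~\ref{bess:bound} is insufficient, since it would only yield $(m-N)m^{-2/3}$, whose sum diverges.
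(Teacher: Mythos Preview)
The paper does not prove this lemma; it is quoted verbatim as \cite[Prop.~4.3]{Borodin} and used as a black box. So there is no ``paper's own proof'' to compare against here.

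That said, your sketch is correct and is essentially the standard argument (and presumably close to what Borodin does). The sum rearrangement to $\sum_{m>N}(m-N)J_m(2\sqrt n)^2$ is exactly right, the Riemann-sum scaling is correct, and you have correctly identified the real issue as the uniform tail bound. One remark: the exponential tail estimate you describe as coming from ``the uniform Debye asymptotic'' is already recorded in this paper as Lemma~\ref{bess:originaltailbound} (also cited from Borodin), so you need not re-derive it. That lemma gives precisely the bound $|J_{r+Ar^{1/3}+s}(r)|\le C_1 r^{-1/3}\exp(-C_2(A^{3/2}+sA^{1/2}r^{-1/3}))$ in the transition region, which under the substitution $r=2\sqrt n$ and your rescaling yields the uniform-in-$n$ Airy-type decay you want. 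With that in hand, your truncate-at-$L$, pass-to-the-limit, then $L\to\infty$ scheme goes through cleanly.
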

 
Specializing to $a=0$ gives
 
 \begin{lem}
 \label{bess:bounded} \[\lim_{n\rightarrow \infty} \left(\sum\limits_{k=2\sqrt{n}}^{\infty} J(k,k;n)\right) = \int_0^{\infty} t(Ai(t))^2 dt. \]
\end{lem}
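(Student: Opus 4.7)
The statement is an immediate specialization of the preceding proposition (cited from Borodin), which asserts
\[
\lim_{n\to\infty}\sum_{k\geq 2\sqrt{n}+an^{1/6}} \mathbf{J}(k,k;n) \;=\; \int_0^\infty t\bigl(Ai(a+t)\bigr)^2\,dt
\]
for every fixed $a\in\mathbb{R}$. Setting $a=0$ collapses the lower summation limit $2\sqrt{n}+an^{1/6}$ to $2\sqrt{n}$ and replaces $Ai(a+t)$ by $Ai(t)$, producing exactly the claimed identity. Thus my ``proof'' is one line: invoke the previous proposition at $a=0$.

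\textbf{One small technicality.} The summation index $k$ is an integer, so ``$k=2\sqrt{n}$'' should be interpreted as $k\geq \lceil 2\sqrt{n}\rceil$ (this is how the preceding proposition must also be read). Shifting the lower endpoint by $O(1)$ alters the sum by at most finitely many terms, each of which is bounded by $\mathbf{J}(k,k;n)\leq J_{k+1}(2\sqrt{n})^2\cdot(1+o(1))=O(n^{-1/3})$ via Lemma~\ref{bess:bound}. Hence the discrepancy is $o(1)$ and disappears in the limit, so the specialization is valid as stated.

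\textbf{Main obstacle.} At this level, there is none --- all genuine analytic content (the Bessel determinantal identity, the uniform Bessel-to-Airy convergence of Lemma~\ref{bess:airy}, and the tail estimates that justify exchanging limit and sum) sits inside the preceding proposition, which we are invoking as a black box. The lemma itself is just the convenient $a=0$ instance used in the bootstrapping argument of Section~\ref{subsec:bessoverview}.
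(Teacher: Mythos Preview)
Your proposal is correct and matches the paper's approach exactly: the paper simply writes ``Specializing to $a=0$ gives'' before stating the lemma, with no further argument. Your extra remark about the $O(1)$ ambiguity in the lower summation limit is more than the paper itself provides, but does not depart from the intended one-line derivation.
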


\begin{lem}[{\cite[Lemma~4.5]{Borodin}}]
\label{bess:originaltailbound} 
There exist $C_1, C_2, C_3, \varepsilon > 0$ such that for sufficiently large $n$, for any $A>0$, $s > 0$, we have  

$$\left|J_{r+Ar^{1/3}+s}(r)\right| \leq C_1 r^{-1/3} \exp\left(-C_2(A^{\frac{3}{2}}+sA^{\frac{1}{2}}r^{-1/3})\right), \hspace{2mm} s \leq \varepsilon$$ 
$$\left|J_{r+Ar^{1/3}+s}(r)\right| \leq \exp\left(-C_3(r+s)\right),\hspace{2mm} s \geq \varepsilon$$

for all $r\gg0$.
\end{lem}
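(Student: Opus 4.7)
The plan is to use the uniform Airy-type asymptotic expansion of Bessel functions near their turning point, together with a direct saddle-point estimate in the regime where the index substantially exceeds the argument. The two bounds in the lemma correspond naturally to these two regimes, and the constants $\varepsilon$, $C_1$, $C_2$, $C_3$ are determined by where one transitions between them.

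First I would address the regime $s \leq \varepsilon$, where $\nu := r + A r^{1/3} + s$ sits in the transition region of the Bessel function. Olver's uniform asymptotic expansion for $J_\nu(r)$ near the turning point $\nu = r$ gives, uniformly in $A > 0$ and $s \in [0,\varepsilon]$,
$$J_{\nu}(r) = \frac{2^{1/3}}{r^{1/3}}\,Ai\!\left(2^{1/3}\bigl(A + s r^{-1/3}\bigr)\right) + O\!\left(r^{-1}\right).$$
The classical super-exponential decay $|Ai(x)| \leq C \exp(-\tfrac{2}{3} x^{3/2})$ for $x \geq 0$, obtainable by a saddle-point analysis of the defining integral in Definition~\ref{defn:airy}, then yields
$$|J_{\nu}(r)| \leq C_1 r^{-1/3} \exp\!\left(-\tfrac{2\sqrt{2}}{3}\bigl(A + s r^{-1/3}\bigr)^{3/2}\right).$$
Since $s r^{-1/3}$ is bounded for $s \leq \varepsilon$, convexity of $x \mapsto x^{3/2}$ gives $(A + s r^{-1/3})^{3/2} \geq A^{3/2} + \tfrac{3}{2} A^{1/2} s r^{-1/3}$, producing the first claimed bound with appropriate $C_2$.

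Next I would handle the regime $s \geq \varepsilon$, in which $\nu$ exceeds $r$ by more than a fixed amount on a macroscopic scale. Starting from the Sonine-type contour integral
$$J_{\nu}(r) = \frac{1}{2\pi i} \int_{\gamma} \exp\!\left(\tfrac{r}{2}\bigl(z - z^{-1}\bigr) - \nu \log z\right) \frac{dz}{z},$$
where $\gamma$ is a suitable Hankel contour, I would deform through the saddle determined by $\tfrac{r}{2}(1 + z^{-2}) = \nu/z$. For $\nu - r \geq \varepsilon'$, this saddle lies off the unit circle and contributes an exponential cost $-c_1(\nu - r)$, while the transverse direction contributes the standard $-c_2 r$ term. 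Summing these exponents and noting that $Ar^{1/3}$ is either bounded or absorbed into $s$ once $s \geq \varepsilon$, I obtain $|J_\nu(r)| \leq \exp(-C_3(r + s))$.

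The main obstacle is controlling the uniformity of the Airy expansion as $A \to \infty$ and matching the two bounds at the interface $s = \varepsilon$. The cleanest route, which I would actually follow, is to invoke directly the uniform Airy-type estimates for $J_\nu(x)$ recorded in Olver's \emph{Asymptotics and Special Functions}, which package the needed uniformity into ready-made inequalities; the compatibility of the two regimes at $s = \varepsilon$ is then just a matter of enlarging $C_1$ and adjusting $C_2, C_3$.
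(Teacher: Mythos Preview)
The paper does not prove this lemma at all: it is quoted verbatim from \cite[Lemma~4.5]{Borodin} and used as a black box, with the only subsequent work being the specialization $r=2n^{1/2}$, $A=2^{-1/3}$ to obtain Lemma~\ref{bess:tailbound}. So there is no ``paper's own proof'' to compare your proposal against.

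Your sketch is a reasonable outline of how such a result is typically established---uniform Airy asymptotics near the turning point for the first regime, and a direct Laplace/saddle-point bound once the index is macroscopically larger than the argument for the second. The concern you yourself flag, namely uniformity in $A$ as $A\to\infty$, is indeed the genuine technical point, and your final paragraph essentially concedes that you would quote Olver's prepackaged uniform estimates rather than derive them. That is fine, but at that point your argument is really a citation too, just to a different source. If you intend this as an independent proof rather than a reference, you would need to make the uniformity in $A$ explicit (the Airy expansion error term in Olver's work depends on the ratio $\nu/r$, and you must check it does not swamp the leading Airy decay when $A$ is large), and you would need to be more careful at the crossover: the stated threshold in the lemma is $s\leq\varepsilon$ versus $s\geq\varepsilon$, but the derived Lemma~\ref{bess:tailbound} uses $s\lessgtr \varepsilon n^{1/2}$, so the $\varepsilon$ in the original is presumably meant to scale with $r$, and your write-up should reflect that.
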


Setting $r=2n^{\frac{1}{2}}$ and $A=2^{-\frac{1}{3}}$ in Lemma~\ref{bess:originaltailbound}, squaring, and adjusting the values $C_i$ as needed, we have the following for all $s$, for some fixed values $C_i, \varepsilon$, and large enough $n$.

\begin{lem}
\label{bess:tailbound}

There exist $C_1, C_2, C_3, \varepsilon > 0$ such that for sufficiently large $n$, for any $s > 0$, we have  

$$\left(J_{2n^{1/2}+n^{1/6}+s}(2n^{1/2})\right)^2 \leq C_1 n^{-1/3} \exp\left(-C_2(1+sn^{-1/6})\right), \hspace{2mm} s \leq \varepsilon n^{1/2}$$

$$ \left(J_{2n^{1/2}+n^{1/6}+s}(2n^{1/2})\right)^2 \leq \exp\left(-C_3(2n^{1/2}+s)\right), \hspace{2mm} s \geq \varepsilon n^{1/2}.$$\end{lem}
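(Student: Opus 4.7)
The plan is to invoke Lemma~\ref{bess:originaltailbound} with the specific choices $r = 2n^{1/2}$ and $A = 2^{-1/3}$, then square both sides of each of the two resulting bounds, absorbing all arising numerical factors into redefined constants $C_i$.

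With $A = 2^{-1/3}$ and $r = 2n^{1/2}$, a quick computation gives $A r^{1/3} = 2^{-1/3} \cdot 2^{1/3} n^{1/6} = n^{1/6}$, so the Bessel index $r + A r^{1/3} + s$ becomes $2n^{1/2} + n^{1/6} + s$, matching the index in the statement. Likewise $r^{-1/3} = 2^{-1/3} n^{-1/6}$, the additive term $A^{3/2} = 2^{-1/2}$ is a positive absolute constant, and $A^{1/2} r^{-1/3} = 2^{-1/2} n^{-1/6}$. Substituting these into the first bound of Lemma~\ref{bess:originaltailbound} yields an inequality of the form $\lvert J_{\cdots}\rvert \leq C_1' n^{-1/6} \exp(-C_2'(1 + s n^{-1/6}))$ for new positive constants $C_1', C_2'$; squaring both sides produces the first claimed bound after relabeling. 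The second bound is obtained by squaring $\exp(-C_3(r+s)) = \exp(-C_3(2n^{1/2}+s))$ and renaming $2C_3$ as $C_3$.

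The one point requiring minor care is the threshold: the original lemma splits the cases at $s = \varepsilon$, whereas the target wants the split at $s = \varepsilon n^{1/2}$. Interpreting the threshold of Lemma~\ref{bess:originaltailbound} in its natural scaling relative to $r$, the condition $s \leq \varepsilon r$ becomes $s \leq 2\varepsilon n^{1/2}$ once $r = 2n^{1/2}$ is substituted, and the factor of $2$ can be absorbed into a new $\varepsilon$. The hypothesis $r \gg 0$ is automatic for large $n$.

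I do not anticipate a real obstacle here; the lemma is a bookkeeping corollary of Lemma~\ref{bess:originaltailbound}. The only thing to verify is that the constants obtained after squaring and rescaling remain strictly positive and depend only on the fixed choice $A = 2^{-1/3}$ (so that they are genuine absolute constants, not functions of $n$ or $s$), which is immediate from the explicit formulas above.
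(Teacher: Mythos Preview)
Your proposal is correct and matches the paper's approach exactly: the paper derives Lemma~\ref{bess:tailbound} in a single sentence by ``setting $r=2n^{1/2}$ and $A=2^{-1/3}$ in Lemma~\ref{bess:originaltailbound}, squaring, and adjusting the values $C_i$ as needed.'' Your explicit verification that $Ar^{1/3}=n^{1/6}$ and your remark on reinterpreting the threshold as $s\le \varepsilon r$ simply make that one-line derivation precise.
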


\subsection{Proof of Theorem~\ref{thm:bess}}

We proceed as described in Section~\ref{subsec:bessoverview}. First, direct application of Lemma~\ref{bess:formula} in the case $|X|=1$, combined with linearity of expectation, yields that $$\mu_{\theta, w} = \sum\limits_{y=w}^{\infty}  J(y,y,\theta)=\sum\limits_{y=w}^{\infty} \sum\limits_{s=1}^{\infty} (J_{y+s}(2\sqrt{\theta}))^2 = \sum\limits_{r=1}^{\infty} r (J_{w+r}(2\sqrt{\theta}))^2, $$  
where the last equality is a simple sum rearrangement. Now let $\theta=n, w=2n^{1/2}-Kn^{\alpha}, \frac{1}{6} < \alpha \leq \frac{1}{2}$, as alluded to before. We will now establish the bounds on this expectation.

\begin{lem}
\label{lem:lowerbound}
$\mu_{n,(2n^{1/2}-Kn^{\alpha})}=\Omega\left(n^{\alpha-\frac{1}{6}}\right)$.
\end{lem}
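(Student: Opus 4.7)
The plan is to restrict the sum defining $\mu_{n,w}$ to those indices $r$ for which the Bessel order $w+r = 2n^{1/2} - Kn^{\alpha} + r$ lies in the Airy transition window around $2\sqrt{n}$, where Lemma~\ref{bess:airy} provides a sharp asymptotic. Setting $x = (r - Kn^{\alpha})n^{-1/6}$ so that $w+r = 2n^{1/2} + xn^{1/6}$, the lemma gives
\[ J_{w+r}(2\sqrt n) = n^{-1/6}\bigl(Ai(x) + O(n^{-1/6})\bigr) \]
uniformly for $x$ in any compact set, and squaring yields $(J_{w+r}(2\sqrt n))^2 = n^{-1/3} Ai(x)^2 + O(n^{-1/2})$.

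Since $Ai$ is continuous and $Ai(0) > 0$, fix once and for all a compact interval $I = [x_0, x_1]$ on which $Ai(x)^2 \geq c_0$ for some constant $c_0 > 0$. Then for every integer $r$ in the window
\[ W_n := \{ r \in \mathbb{Z} : x_0 n^{1/6} \leq r - Kn^{\alpha} \leq x_1 n^{1/6} \}, \]
the estimate above gives $(J_{w+r}(2\sqrt n))^2 \geq \tfrac{c_0}{2} n^{-1/3}$ for all sufficiently large $n$. Moreover $|W_n| = (x_1 - x_0)n^{1/6} + O(1)$, and because $\alpha > 1/6$ we have $Kn^{\alpha} \gg n^{1/6}$, so every $r \in W_n$ satisfies $r \geq Kn^{\alpha} - |x_0| n^{1/6} = (1 - o(1)) K n^{\alpha}$.

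Combining these estimates,
\[ \mu_{n,w} \geq \sum_{r \in W_n} r \cdot (J_{w+r}(2\sqrt n))^2 \geq |W_n| \cdot (1-o(1)) K n^{\alpha} \cdot \tfrac{c_0}{2} n^{-1/3} = \Omega(n^{\alpha - 1/6}), \]
which is the desired bound. The argument is essentially a one-shot application of the uniform Airy approximation, so no step is a substantial obstacle; the only care needed is that the additive error $O(n^{-1/2})$ in the squared-Bessel estimate is of smaller order than the leading term $\tfrac{c_0}{2} n^{-1/3}$, which is automatic on the fixed compact window for large $n$.
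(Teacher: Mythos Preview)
Your proof is correct and follows essentially the same approach as the paper: both restrict the sum to a window of width $\Theta(n^{1/6})$ around $r=Kn^{\alpha}$, apply the uniform Airy approximation (Lemma~\ref{bess:airy}) on a compact $x$-interval where $Ai$ is bounded away from zero, and use $r\sim Kn^{\alpha}$ there. The only cosmetic difference is that the paper specializes to the interval $[-1,1]$ (noting $Ai$ has no zeros there) while you invoke $Ai(0)>0$ and continuity to produce an abstract interval $[x_0,x_1]$.
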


\begin{proof}Taking $n$ sufficiently large, we have that $$\mu_{n,(2n^{1/2}-Kn^{\alpha})} = \sum\limits_{r=1}^{\infty} r (J_{2n^{1/2}-Kn^{\alpha}+r}(2\sqrt{n}))^2 \geq \sum\limits_{r=Kn^{\alpha}-n^{1/6}}^{Kn^{\alpha}+n^{1/6}} r (J_{2n^{1/2}-Kn^{\alpha}+r}(2\sqrt{n}))^2 $$
$$ \geq \sum\limits_{j=-n^{1/6}}^{n^{1/6}} \left(\frac{Kn^{\alpha}}{2}\right) (J_{2n^{1/2}+j}(2\sqrt{n}))^2 = \left(\frac{Kn^{\alpha}}{2}\right) \sum\limits_{j=-n^{1/6}}^{n^{1/6}} (J_{2n^{1/2}+j}(2\sqrt{n}))^2. $$

Because the Airy function has no zeros in the interval $[-1,1]$ and is continuous, the uniformity in Lemma~\ref{bess:airy}  implies that each squared term in this last sum is of size $\Theta(n^{-1/3})$. Since we have $\Theta(n^{1/6})$ terms in the sum, we conclude that for large n, $$\mu_{n,(2n^{1/2}-Kn^{\alpha})}\geq \left(\frac{Kn^{\alpha}}{2}\right) \sum\limits_{j=-n^{1/6}}^{n^{1/6}} (J_{2n^{1/2}+j}(2\sqrt{n}))^2 =\Theta(n^{\alpha-\frac{1}{6}}). \hspace{1cm} $$ 
\end{proof}

\begin{lem}
\label{lem:upperbound}
$\mu_{n,(2n^{1/2}-Kn^{\alpha})}=O\left(n^{2\alpha-\frac{1}{3}}\right)$. Further, given fixed $\alpha$, the implicit constant in $O(n^{2\alpha-\frac{1}{3}})$ is $O(K^2)$
\end{lem}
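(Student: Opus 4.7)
The plan is to split the sum
\[\mu_{n,2n^{1/2}-Kn^{\alpha}} \;=\; \sum_{r=1}^{\infty} r\,\bigl(J_{2n^{1/2}-Kn^{\alpha}+r}(2\sqrt{n})\bigr)^{2}\]
at the threshold $r_{0} := \lceil Kn^{\alpha} + n^{1/6}\rceil$, which is precisely the cutoff above which we can feed the Bessel index into Lemma~\ref{bess:tailbound} with $s := r - Kn^{\alpha} - n^{1/6} \geq 0$. The part $r \leq r_{0}$ should be handled by the crude pointwise bound $|J_{\nu}(x)| \leq x^{-1/3}$ from Lemma~\ref{bess:bound}, and the part $r > r_{0}$ by the two-regime tail bound in Lemma~\ref{bess:tailbound}. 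I expect the small-$r$ piece to produce the dominant contribution $O(K^{2}n^{2\alpha-1/3})$, while the tail contribution will be strictly lower order in $n$ (for $\alpha > 1/6$) and hence absorbed.

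\textbf{Step 1 (head of the sum).} For $1 \leq r \leq r_{0}$, Lemma~\ref{bess:bound} with $x = 2\sqrt{n}$ gives
\[\bigl(J_{2n^{1/2}-Kn^{\alpha}+r}(2\sqrt{n})\bigr)^{2} \;\leq\; (2\sqrt{n})^{-2/3} \;=\; O(n^{-1/3}).\]
Summing $r$ from $1$ to $r_{0}$ yields
\[\sum_{r=1}^{r_{0}} r \cdot O(n^{-1/3}) \;=\; O\bigl(r_{0}^{2}\, n^{-1/3}\bigr) \;=\; O\bigl(K^{2} n^{2\alpha-1/3}\bigr),\]
using that $Kn^{\alpha}$ dominates $n^{1/6}$ for $\alpha > 1/6$. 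This already has the right shape, and the $K^{2}$ dependence is visible.

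\textbf{Step 2 (tail of the sum).} Reindex by $s = r - Kn^{\alpha} - n^{1/6} \geq 0$, so the Bessel-function order becomes $2n^{1/2} + n^{1/6} + s$, directly matching Lemma~\ref{bess:tailbound}. For $0 \leq s \leq \varepsilon n^{1/2}$, bound
\[r\,\bigl(J_{2n^{1/2}+n^{1/6}+s}(2\sqrt{n})\bigr)^{2} \leq (Kn^{\alpha} + n^{1/6} + s)\,C_{1} n^{-1/3} \exp\bigl(-C_{2}(1+sn^{-1/6})\bigr),\]
then estimate the sum by the corresponding integral: substituting $u = sn^{-1/6}$ converts it into a Gaussian/exponential integral that contributes $O(Kn^{\alpha - 1/6}) + O(1)$. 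For $\alpha > 1/6$ this is strictly smaller than $n^{2\alpha-1/3}$, so it is absorbed into $O(K^{2}n^{2\alpha-1/3})$. For $s \geq \varepsilon n^{1/2}$, the bound $\exp(-C_{3}(2n^{1/2}+s))$ gives super-polynomial decay, and even the multiplier $r$ cannot save it, so this regime contributes $o(1)$.

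\textbf{Step 3 (combine).} Adding the two estimates yields $\mu_{n,2n^{1/2}-Kn^{\alpha}} = O(K^{2}n^{2\alpha-1/3})$, and the bookkeeping in Step~1 makes the $K^{2}$ dependence in the implicit constant explicit, proving the refinement in the second sentence of the statement. The main subtlety is purely arithmetic: one needs to verify that the tail estimate in Step~2 never exceeds $K^{2}n^{2\alpha-1/3}$ \emph{uniformly in} $K$, which follows because the exponential factor $e^{-C_{2}(1+sn^{-1/6})}$ does not depend on $K$ at all, so the $Kn^{\alpha}$ factor appears only linearly in that regime. No step requires new ideas beyond applying the right Bessel bound in the right range.
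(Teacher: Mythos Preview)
Your proposal is correct and follows essentially the same strategy as the paper: split the sum into a head controlled by the pointwise Landau bound $|J_\nu(x)|\le x^{-1/3}$ (yielding the dominant $K^2 n^{2\alpha-1/3}$) and a tail controlled by the exponential estimate of Lemma~\ref{bess:tailbound} (yielding $O(Kn^{\alpha-1/6})+O(1)$, strictly lower order since $\alpha>1/6$). The only organizational difference is that the paper first passes back to the form $\sum_y \mathbf{J}(y,y;n)$, peels off the far tail as an $O(1)$ via Lemma~\ref{bess:bounded}, and then regroups the remaining finite double sum, whereas you work directly with $\sum_r r(J_{w+r})^2$ and place the cut at $r_0=Kn^{\alpha}+n^{1/6}$; the two decompositions produce the same three pieces.
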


\begin{proof}
Using Lemma~\ref{bess:bounded}, we see that $$\mu_{n,(2n^{1/2}-Kn^{\alpha})}= \sum\limits_{y=2n^{1/2}-Kn^{\alpha}}^{\infty}  J(y,y,n) = \sum\limits_{y=2n^{1/2}-Kn^{\alpha}}^{2n^{1/2}}  J(y,y,n)+\sum\limits_{y=2n^{1/2}}^{\infty}  J(y,y,n) $$$$= \sum\limits_{y=2n^{1/2}-Kn^{\alpha}}^{2n^{1/2}}  J(y,y,n)+O(1). $$

Expanding out the definition of $\textbf{J}(y,y,n)$ we have that 
$$\sum\limits_{y=(2n^{1/2}-Kn^{\alpha}}^{2n^{1/2})}  \textbf J(y,y,n) = \sum\limits_{y=(2n^{1/2}-Kn^{\alpha})}^{2n^{1/2}} \sum\limits_{s=1}^{\infty} (J_{y+s+1}(2n^{1/2}))^2 =$$
$$ \left( \sum\limits_{\ell=1}^{Kn^{\alpha}} \ell(J_{\ell+s+1}(2n^{1/2}))^2 \right) + Kn^{\alpha} \sum\limits_{m=2n^{1/2}}^{\infty} (J_m(2n^{1/2}))^2,$$
where the last equality follows from another simple regrouping of terms. By Lemma~\ref{bess:bound}, for any value of $\ell$ we have $J_{\ell+s+1}(2n^{1/2}) \leq n^{-1/6}$. Thus, we have that the first sum is of appropriate size: 
$$ \left( \sum\limits_{\ell=1}^{Kn^{\alpha}} \ell(J_{\ell+s+1}(2n^{1/2}))^2 \right) \leq  \left( \sum\limits_{\ell=1}^{Kn^{\alpha}} \ell \right) n^{-1/3} \leq K^2n^{2\alpha-\frac{1}{3}}.$$

To establish Lemma~\ref{lem:upperbound}, it remains to show that the latter term $$Kn^{\alpha} \sum\limits_{m=2n^{1/2}}^{\infty} (J_m(2n^{1/2}))^2$$ is of appropriate size. First, note that from Lemma~\ref{bess:bound} again,  $$Kn^{\alpha}\sum\limits_{m=2n^{1/2}}^{2n^{1/2}+n^{1/6}} (J_m(2n^{1/2}))^2 \leq Kn^{\alpha}n^{1/6} (n^{-1/3}) = Kn^{\alpha-\frac{1}{6}} = O(n^{2\alpha-\frac{1}{3}}),$$ where the last equality follows from the assumption $\alpha > \frac{1}{6}$.
We are left with upper-bounding the sum $$Kn^{\alpha}\sum\limits_{m=2n^{1/2}+n^{1/6}}^{\infty} (J_m(2n^{1/2}))^2.$$
Now, using Lemma~\ref{bess:tailbound}, take suitable constants $C_1, C_2, C_3, \epsilon$, and let $n$ be large enough, so that we have for all s
$$ (J_{2n^{1/2}+n^{1/6}+s}(2n^{1/2}))^2 \leq C_1 n^{-1/3} \exp(-C_2(1+sn^{-1/6})), \hspace{2mm} s \leq \epsilon n^{1/2}$$
$$ (J_{2n^{1/2}+n^{1/6}+s}(2n^{1/2}))^2 \leq \exp(-C_3(2n^{1/2}+s)), \hspace{2mm} s \geq \epsilon n^{1/2}.$$

We claim that the above sum is also $O(n^{\alpha-\frac{1}{6}})$, or equivalently that
$$\sum\limits_{m=2n^{1/2}+n^{1/6}}^{\infty} (J_m(2n^{1/2}))^2= \sum\limits_{m=2n^{1/2}+n^{1/6}}^{(2+\epsilon)n^{1/2}+n^{1/6}} (J_m(2n^{1/2}))^2+\sum\limits_{m=(2+\epsilon)n^{1/2}+n^{1/6}}^{\infty} (J_m(2n^{1/2}))^2 = O(n^{-1/6}).$$

For the first of these 2 sums, we have 
$$\sum\limits_{m=2n^{1/2}+n^{1/6}}^{(2+\epsilon)n^{1/2}+n^{1/6}} (J_m(2n^{1/2}))^2=
\sum\limits_{s=0}^{\epsilon n^{1/2}} (J_{2n^{1/2}+n^{1/6}+s}(2n^{1/2}))^2 \leq
\sum\limits_{s=0}^{\epsilon n^{1/2}} C_1 n^{-1/3} \exp(-C_2(1+sn^{-1/6})) $$ 
$$ \leq C_1 n^{-1/3}\exp(-C_2) \sum\limits_{s=0}^{\infty} (\exp(-C_2(n^{-1/6})))^s = \frac{C_1 n^{-1/3}\exp(-C_2)}{1-\exp(-C_2n^{-1/6})}$$
$$=\Theta(\frac{C_1n^{-1/3}\exp(-C_2)}{C_2n^{-1/6}}) = \Theta(\frac{C_1\exp(-C_2)}{C_2} n^{-1/6})=O(n^{-1/6}).$$

For the second, we have 
$$\sum\limits_{m=(2+\epsilon)n^{1/2}+n^{1/6}}^{\infty} (J_m(2n^{1/2}))^2=\sum\limits_{s=\epsilon n^{1/2}}^{\infty} (J_{2n^{1/2}+n^{1/6}+s}(2n^{1/2}))^2 \leq \sum\limits_{s=\epsilon n^{1/2}}^{\infty}  \exp(-C_3(2n^{1/2}+s)) \leq $$
$$ \sum\limits_{s=0 }^{\infty}\exp(-C_3(2n^{1/2}+s)) = \frac{\exp(-2C_3n^{1/2})}{1-exp(-C_3)} = \exp(\Theta(-n^{1/2}))=O(n^{-1/6}).$$

Note that only the first part of our bounding affects the eventual constant in the $O(n^{2\alpha-\frac{1}{3}})$ of the lemma statement, because the other terms are $O(n^{\alpha-\frac{1}{6}})=o(n^{2\alpha-\frac{1}{3}})$. So, combining these separate bounds, we have established the lemma. 
\end{proof}

Now we verify that $T(n,2n^{1/2}-Kn^{\alpha})$ is concentrated around its mean.

\begin{lem}
\label{lem:concentrated}
If $\lambda$ is distributed according to poissonized $M^n$, then $T=T(\lambda,2n^{1/2}-Kn^{\alpha})$ is concentrated near its mean $\mu=\mu_{n,(2n^{1/2}-Kn^{\alpha})}$, in the sense that for all $\epsilon > 0$,  $$\lim_{n \rightarrow \infty} \mathbb{P}[|T-\mu|>\epsilon\mu|] =0.$$
\end{lem}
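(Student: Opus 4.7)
The plan is to apply Chebyshev's inequality, which reduces the problem to bounding $\mathrm{Var}(T)$ from above. The key observation is that Lemma~\ref{bess:formula} says $D(\lambda)$ is a determinantal point process on $\mathbb{Z}$ with kernel $\mathbf{J}(\cdot,\cdot;\theta)$, so the covariances of the indicator variables $\mathbb{1}[y\in D(\lambda)]$ have a very clean form.

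First I would write $T=\sum_{y\geq w}\mathbb{1}[y\in D(\lambda)]$ (with $w=2n^{1/2}-Kn^{\alpha}$), so that
\[
\mathrm{Var}(T)=\sum_{y\geq w}\mathrm{Var}\bigl(\mathbb{1}[y\in D]\bigr)+\sum_{\substack{y_1,y_2\geq w \\ y_1\neq y_2}}\mathrm{Cov}\bigl(\mathbb{1}[y_1\in D],\mathbb{1}[y_2\in D]\bigr).
\]
Each diagonal term is a Bernoulli variance, bounded by $\mathbb{P}[y\in D]=\mathbf{J}(y,y;n)$, so the diagonal contribution is at most $\mu_{n,w}$. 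For off-diagonal terms, Lemma~\ref{bess:formula} with $|X|=2$ gives
\[
\mathbb{P}[y_1,y_2\in D]=\mathbf{J}(y_1,y_1;n)\mathbf{J}(y_2,y_2;n)-\mathbf{J}(y_1,y_2;n)^{2},
\]
so $\mathrm{Cov}\bigl(\mathbb{1}[y_1\in D],\mathbb{1}[y_2\in D]\bigr)=-\mathbf{J}(y_1,y_2;n)^{2}\leq 0$ (using symmetry $\mathbf{J}(y_1,y_2;n)=\mathbf{J}(y_2,y_1;n)$). Hence $\mathrm{Var}(T)\leq \mu_{n,w}$.

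With the variance bound in hand, Chebyshev's inequality gives
\[
\mathbb{P}\bigl[|T-\mu_{n,w}|>\varepsilon\mu_{n,w}\bigr]\leq \frac{\mathrm{Var}(T)}{\varepsilon^{2}\mu_{n,w}^{2}}\leq \frac{1}{\varepsilon^{2}\mu_{n,w}}.
\]
By Lemma~\ref{lem:lowerbound} we have $\mu_{n,w}=\Omega(n^{\alpha-1/6})$, and since $\alpha>\tfrac{1}{6}$ by hypothesis, $\mu_{n,w}\to\infty$, making the right-hand side tend to $0$.

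The whole argument is essentially free once the determinantal structure is invoked; the only subtlety is remembering that determinantal processes are negatively correlated, which is precisely what makes the cross-covariance terms harmless. There is no real obstacle here — this is a standard second-moment argument tailor-made for determinantal point processes, and it leverages the already-established lower bound on the mean.
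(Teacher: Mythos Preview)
Your proposal is correct and is essentially identical to the paper's own proof: the paper also writes $T$ as a sum of Bernoulli indicators, uses Lemma~\ref{bess:formula} with $|X|=2$ to compute $\mathrm{Cov}(I(x\in D),I(y\in D))=-\mathbf{J}(x,y;n)^2\le 0$, deduces $\sigma^2\le \mu$, and finishes with Chebyshev together with Lemma~\ref{lem:lowerbound}. The only cosmetic difference is that the paper packages the step ``nonpositive covariances $\Rightarrow$ $\sigma^2\le\mu$'' as a separate general lemma, whereas you compute it directly.
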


\begin{proof}
To show the lemma, we estimate the variance $\sigma^2$ of $T(\lambda,2n^{1/2}-Kn^{\alpha})$. Perhaps surprisingly, this is very easy. We will show that $$\sigma^2=\sigma_{n,(2n^{1/2}-Kn^{\alpha})}^2 \leq \mu= \mu_{n,(2n^{1/2}-Kn^{\alpha})},$$
which suffices by the Chebyshev inequality, since $\mu$ grows to infinity with n by Lemma~\ref{lem:lowerbound}. To show this, we note the following general fact (Lemma~\ref{lem:anticorr}): if $X=\sum\limits_{j=0}^{\infty}x_j$ is a finite-expectation sum of Bernoulli (0 or 1) random variables with pairwise non-positive covariances, then $\sigma_X^2 \leq \mu_X$.

In our case, this lemma easily implies the result: when $\lambda$ is distributed according to poissonized $M^n$ we have $$T(\lambda,2n^{1/2}-Kn^{\alpha})=\sum\limits_{w=2n^{1/2}-Kn^{\alpha}}^{\infty} I(w \in D(\lambda)),$$ where the indicator variables $I$ are 0 or 1 according to the truth value of their argument. We need only check that $Cov(I(x \in D(\lambda)),I(y \in D(\lambda))) \leq 0$ for $x \neq y$. We have $$Cov(I(x \in D(\lambda)),I(y \in D(\lambda)))= \mathbb{P}[\{x,y\} \subseteq D(\lambda)]-\mathbb{P}[x \in D(\lambda)]\mathbb{P}[y \in D(\lambda)].$$
Using Lemma~\ref{bess:formula}, this is 
$$\det{\left(\begin{matrix} J(x,x;n) &J(x,y;n) \\ J(y,x;n)&J(y,y;n)\end{matrix}\right)} - J(x,x;n)J(y,y;n)= -J(x,y;n)J(y,x;n)=-J(x,y;n)^2\leq 0,$$
since $J(x,y;n)$ is symmetric in $x$ and $y$. Therefore, the covariances are all negative, so it only remains to verify Lemma~\ref{lem:anticorr}
\end{proof}
\begin{lem}
\label{lem:anticorr}
$X=\sum\limits_{j=0}^{\infty}x_j$ is a finite-expectation sum of Bernoulli variables with pairwise non-positive covariances, then $\sigma_X^2 \leq \mu_X$
\end{lem}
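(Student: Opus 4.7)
The proof is essentially a direct variance computation, with the only subtlety being that the sum is infinite and we need to justify termwise manipulation.

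The plan is as follows. First I would handle the finite case. Let $X_N = \sum_{j=0}^{N} x_j$ be the partial sum. Then by bilinearity,
$$\operatorname{Var}(X_N) = \sum_{i=0}^N \operatorname{Var}(x_i) + \sum_{\substack{0\le i,j\le N \\ i\ne j}} \operatorname{Cov}(x_i,x_j).$$
Since each $x_i$ is Bernoulli with parameter $p_i := \mathbb{E}[x_i]$, we have $\operatorname{Var}(x_i) = p_i(1-p_i) \le p_i$, so $\sum_{i=0}^N \operatorname{Var}(x_i) \le \sum_{i=0}^N p_i = \mathbb{E}[X_N]$. The hypothesis that pairwise covariances are non-positive makes the second sum non-positive. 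Combining, $\operatorname{Var}(X_N) \le \mathbb{E}[X_N] \le \mu_X$ for every $N$.

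Next I would pass to the limit $N \to \infty$. Because the $x_j$ are non-negative, $X_N \nearrow X$ pointwise and $X_N^2 \nearrow X^2$ pointwise. Monotone convergence yields $\mathbb{E}[X_N] \to \mathbb{E}[X] = \mu_X$ and $\mathbb{E}[X_N^2] \to \mathbb{E}[X^2]$. The bound $\operatorname{Var}(X_N) \le \mu_X$ gives $\mathbb{E}[X_N^2] \le \mu_X + \mu_X^2$, and passing to the limit, $\mathbb{E}[X^2] \le \mu_X + \mu_X^2 < \infty$ (using that $\mu_X < \infty$ by hypothesis). Hence $\operatorname{Var}(X) = \mathbb{E}[X^2] - \mu_X^2 \le \mu_X$, as desired.

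There is no real obstacle here beyond the standard monotone convergence justification; the lemma reduces to the two observations that (i) a Bernoulli variable always satisfies $\operatorname{Var}(x) \le \mathbb{E}[x]$, and (ii) subtracting non-positive covariance terms can only decrease the variance. The mild care needed to handle the countable index set does not affect the conclusion because all quantities involved are monotone in the truncation level.
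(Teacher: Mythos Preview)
Your proof is correct and follows essentially the same approach as the paper: both arguments reduce to the variance decomposition $\operatorname{Var}(X)=\sum_i \operatorname{Var}(x_i)+\sum_{i\ne j}\operatorname{Cov}(x_i,x_j)$, use that a Bernoulli variable satisfies $\operatorname{Var}(x_i)\le \mathbb{E}[x_i]$, and drop the non-positive covariance terms. The only difference is cosmetic: the paper manipulates the infinite series directly, whereas you truncate to $X_N$ and pass to the limit via monotone convergence, which is a slightly cleaner way to justify the termwise manipulation.
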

\begin{proof}
To prove this, we simply compute $\sigma_X^2=\mathbb{E}[X^2]-\mathbb{E}[X]^2$. The condition on covariances is equivalent to $\E[x_ix_j] \leq \E[x_i]\E[x_j]$ for all $i \neq j$. Because $\E[x]=\sum\limits_{i=0}^{\infty} \E[x_i]$ is finite, its square is finite, so we have
$$(\E[x])^2=\left(\sum\limits_{i=0}^{\infty} (\E[x_i])^2\right) + \left(\sum\limits_{i>j \geq 0}^{} \E[x_i]\E[x_j]\right) < \infty.$$
Because $\E[x_ix_j] \leq \E[x_i]\E[x_j]$, we find that
$$\E[x]^2+\mu_X=\E[x]^2+\E[x]=\left(\sum\limits_{i=0}^{\infty} (\E[x_i])^2+\E[x_i]\right) + \left(\sum\limits_{i>j \geq 0}^{} \E[x_i]\E[x_j]\right)$$
$$
\geq \left(\sum\limits_{i=0}^{\infty} (\E[x_i^2])\right) + \left(\sum\limits_{i>j \geq 0}^{} \E[x_ix_j]\right)=\E[x^2].$$

\end{proof}


We now know that the bounds in lemmas~\ref{lem:lowerbound},~\ref{lem:upperbound} apply to almost all partitions $\lambda$, not only ``average" partitions: for some constants $B_1(K,\alpha), B_2(K,\alpha)$ where $B_2=O(K^2)$ for fixed $\alpha$, if $\lambda$ is distributed according to $M^n$ we have $$B_1(K,\alpha)n^{\alpha-\frac{1}{6}} \leq T(\lambda,2n^{1/2}-Kn^{\alpha}) \leq B_2(K,\alpha)n^{2\alpha-\frac{1}{3}}$$
with probability $1-o(1)$ as $n \rightarrow \infty$. We now depoissonize the result. The key observation, as described in \cite{Fulman}, is that simple Plancherel measures $M_n$ may be viewed as samplings at time $n$ of a growth process on partitions, in which we begin with an empty partition and add additional blocks randomly at each step. It is clear that adding a block to $\lambda$ cannot decrease the value of $T(\lambda,w)$. So because we can embed these Plancherel measures into a growth process, the expectations are monotone: for $\ell<n$ we have

$$\mathbb{P}\left[\lambda^{(\ell)} < B_1(K,\alpha)n^{\alpha-\frac{1}{6}}\right] \geq \mathbb{P}\left[\lambda^{(n)} < B_1(K,\alpha)n^{\alpha-\frac{1}{6}}\right] $$
and for $\ell > n$ we have 

$$\mathbb{P}\left[\lambda^{(\ell)} > B_2(K,\alpha)n^{2\alpha-\frac{1}{3}}\right] \geq \mathbb{P}\left[\lambda^{(n)} > B_2(K,\alpha)n^{2\alpha-\frac{1}{3}}\right]. $$

We claim that, for $n$ large, the probability of a Poisson variable with mean $n$ to be less than $n$ converges to $\frac{1}{2}$. Indeed, this is an immediate result of the Central Limit Theorem for independent, identically distributed random variables applied to the Poisson random variable with mean 1. Therefore, \[ \limsup_{n \rightarrow \infty} M_n\left(\left\{\lambda|T(\lambda,2n^{1/2}-Kn^{\alpha}) < B_1(K,\alpha)n^{\alpha-\frac{1}{6}}\right\}\right) \]  
\[ \leq 2 \limsup_{n \rightarrow \infty} M^n\left(\left\{\lambda|T(\lambda,2n^{1/2}-Kn^{\alpha}) < B_1(K,\alpha)n^{\alpha-\frac{1}{6}}\right\}\right) =0,\]
and similarly for the upper bound; a low probability of deviation for poissonized Plancherel measure directly implies the same bound (up to a factor of 2) for standard Plancherel measure. So, we have 
$$B_1(K,\alpha)n^{\alpha-\frac{1}{6}} \leq T(\lambda,2n^{1/2}-Kn^{\alpha}) \leq B_2(K,\alpha)n^{2\alpha-\frac{1}{3}}$$
with probability $1-o(1)$ for $\lambda$ distributed according to $M_n$ measure. 


Now we establish the connection between $T(\lambda,2n^{1/2}-Kn^{\alpha})$ and the column sizes. It will be valuable to visualize a Young diagram (in English coordinates) as a block-walk beginning from the far-right of the top border line, with coordinates $(n,0)$. In this model, every down-move corresponds to a value $(\lambda_i-i)$ and every left-move corresponds to the end of a column. With this in mind, $T(\lambda,2n^{1/2}-Kn^{\alpha})$ is essentially the height of the largest column so far after $n-2n^{1/2}+Kn^{\alpha}$ moves from the starting point. The idea is that our upper bound lets us move far out without fearing that our largest column is too big. Our lower bound tells us that many of the columns we have formed are large, implying that the sum of their sizes is large. The only caveat is that if we are moving vertically, $T$ measures the current vertical distance moved, but may not exactly measure a column height; we may be in the middle of a column. However, we will circumvent this issue by using multiple values of $K$ simultaneously, and showing that in between there must be many horizontal moves. 

Pick an $\epsilon > 0$. We will give a bootstrapping argument that gives result (4) for parameter $\beta$ with probability $1-6\epsilon$. First, because the number of size-$1$ columns $\lambda_1-\lambda_2$ satisfies $\frac{\lambda_1-\lambda_2}{n^{1/6}}$ converges in distribution to a limiting density, there exists $\delta_1$ such that (for large $n$) with probability at least $1-\epsilon$, there are at least $\frac{\delta_1}{\beta}n^{1/6}$ parts of size 1. Using lemmas~\ref{lem:lowerbound} and~\ref{lem:upperbound}, since $\frac{1}{4}-\frac{1}{6}=\frac{1}{12}$ and $\frac{2}{4}-\frac{1}{3}=\frac{1}{6}$, we may pick $\delta_2,$ and then $ \delta_3$, such that for large $n$, with probability at least $1-\epsilon$, $$T(\lambda,2n^{1/2}-\delta_2n^{1/4}) \geq {\delta_3}n^{\frac{1}{12}}$$
and $$T(\lambda,2n^{1/2}-2\delta_2n^{1/4}) \leq \delta_1n^{\frac{1}{6}}$$
(for the second inequality, we use the fact that the constant factor in Lemma~\ref{lem:upperbound} is $O(K^2)$). Now, for large $n$, we have $\delta_2n^{1/4} -\delta_1n^{\frac{1}{6}} \geq \frac{\delta_2}{2}n^{1/4}$. This implies that $D(\lambda)$ cannot contain more than $\frac{\delta_2}{2}n^{1/4}$ of the interval of integers $[2n^{1/2}-2\delta_2n^{1/4}, 2n^{1/2}-\delta_2n^{1/4}]$ as this would violate the latter inequality above. In view of our block-walking model, this is equivalent to at least $\frac{\delta_2}{2}n^{1/4}$ horizontal move being made in between the steps $n-2n^{1/2}+\delta_2n^{1/4}, n-2n^{1/2}+2\delta_2n^{1/4}$ meaning that at least $\frac{\delta_2}{2}n^{1/4}$ columns in this range have height at least $\delta_3n^{\frac{1}{12}}$ (this lower bound on the height comes from the first inequality above). Because the largest column among these is smaller than $\delta_1n^{\frac{1}{6}}$, the condition in result (4) holds for all columns up to size $\delta_1n^{\frac{1}{6}}$. Therefore, by bootstrapping, it holds for all columns with size at most $\beta$ times the sum of their sizes, or at most 
$$\beta\left(\frac{\delta_2}{2}n^{\frac{1}{4}}\right)\left(\delta_3n^{\frac{1}{12}}\right)=\frac{\beta\delta_2\delta_3}{2}n^{\frac{1}{3}}=\delta_4n^\frac{1}{3}.$$

Now we repeat the argument once more. Again using Lemma~\ref{lem:upperbound}, we pick $\delta_5$ such that with probability $1-\epsilon$,
$$T\left(\lambda, 2n^{\frac{1}{2}}-2\delta_5n^{\frac{1}{3}}\right) \leq \frac{\delta_5}{2}n^{\frac{1}{3}}$$
and $$\delta_5 \leq \delta_4.$$
The first is possible because the constant in Lemma~\ref{lem:upperbound} is $O(K^2)$, implying that it is $o(K)$ for $K$ small. Given $\delta_5$ we can then pick $\delta_6$ such that with probability $1-\epsilon$, $$T(\lambda,2n^{1/2}-\delta_5n^{\frac{1}{3}})\geq \delta_6n^{1/6}.$$

The first inequality above implies that of the $\delta_5n^{1/3}$ block moves from step $n-2n^{1/2}+\delta_5n^{1/3}$ to step $n-2n^{1/2}+2\delta_5n^{1/3}$, we may make at most $\frac{\delta_5}{2}n^{1/3}$ vertical moves. Hence, we make at least $\frac{\delta_5}{2}n^{1/3}$ horizontal moves in this span. By the definition of $\delta_6$, we make a column of size at least $\delta_6n^{\frac{1}{6}}$ each time, giving a total column size sum of $\frac{\delta_5\delta_6}{2}n^{1/2}$. 

Now we are almost done. Our bootstrapping has shown Theorem~\ref{thm:bess} for columns of size at most $\frac{\beta\delta_5\delta_6}{2}=\delta_7n^{1/2}$
with probability at least $1-4\epsilon$. Bootstrapping once more, we can now sum up all the column lengths of size at most $\delta_7n^{1/2}$. At scale $\Theta(n^{1/2})$, we can simply apply the limit shape theorem to conclude that for almost all partitions, the sum of such column lengths is $\Theta(n)$, a constant fraction of the total size, because we are collecting a positive-area amount of the limit shape. Because almost all partitions have all columns of size $O(\sqrt{n})$, we have bootstrapped to completion with error probability $6\epsilon$ and so the result is proved.

\section{Appendix: Generalization of Dominance Result}

We extensively used Theorem~\ref{thm:dominance} stating that if $\varrho_m,\lambda$ are dominance comparable, then $c(\varrho_m,\varrho_m,\lambda)$. We present a generalization. 

\begin{thm}
\label{thm:gendom}

For partitions $\mu,$ $\nu \vdash n$, if $\mu$ has distinct row lengths and $\nu \succeq \mu$, then $c(\mu,\mu,\nu)$.

\end{thm}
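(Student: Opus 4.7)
The plan is to argue by induction on $|\mu|$, adapting the strategy behind Ikenmeyer's proof of Theorem~\ref{thm:dominance}. The base cases $|\mu|\leq 1$ are immediate, since then $\nu=\mu$ and $c(\mu,\mu,\mu)$ is trivial.

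The central observation is that any partition $\mu=(\mu_1,\dots,\mu_k)$ with distinct row lengths admits the horizontal decomposition
\[
\mu = \varrho_k \Hplus \mu^{*}, \qquad \mu^{*} = (\mu_1 - k,\, \mu_2-(k-1),\, \dots,\, \mu_k - 1).
\]
The strict inequalities $\mu_i > \mu_{i+1}$ are exactly what guarantee that $\mu^{*}$ is itself a valid partition (weakly decreasing with non-negative entries). This decomposition is the main workhorse: it cleanly isolates a staircase factor inside $\mu$, letting us offload the ``staircase portion'' onto Theorem~\ref{thm:dominance} while handling the residual $\mu^{*}$ recursively.

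Given $\nu \succeq \mu$, I would construct a horizontal splitting $\nu = A \Hplus B$ with $|A| = \binom{k+1}{2}$, $|B| = |\mu^{*}|$, $A \succeq \varrho_k$, and $B \succeq \mu^{*}$. Theorem~\ref{thm:dominance} then yields $c(\varrho_k,\varrho_k,A)$, an inductive application handles $c(\mu^{*},\mu^{*},B)$, and the semigroup property (Theorem~\ref{thm:hsum}) combines these into $c(\mu,\mu,\nu)$. Concretely, I would construct $A$ and $B$ by distributing the columns of $\nu$: allocate leftmost columns to $A$ until the target size $\binom{k+1}{2}$ is approximately reached, assign the remainder to $B$, and then make single-box adjustments to hit the exact sizes. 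The two required dominance inequalities should follow from $\nu \succeq \mu$ together with the distinct-row structure of $\mu$, which gives slack in the partial-sum comparisons separating the ``staircase part'' from the ``residual part''.

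The main obstacle is that $\mu^{*}$ need not itself have distinct row lengths, so the inductive hypothesis does not apply to $(\mu^{*}, \mu^{*}, B)$ directly. My plan here is to iterate the decomposition, peeling off a maximal sub-staircase from $\mu^{*}$ at each stage; since the total size strictly decreases, the recursion terminates. Edge cases in which the residual stabilizes at a ``rigid'' shape such as a rectangle will need separate treatment, potentially via tensor-cube positivity for symmetric partitions (\cite{symcube}) or the known tensor-square structure for hooks and two-row partitions (\cite{pak}). Making the column allocation in $\nu$ compatible with all of the nested dominance conditions generated along this recursion is the most delicate step, and is where the distinct-row hypothesis on $\mu$ will be used most heavily.
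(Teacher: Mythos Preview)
Your approach via horizontal decomposition and the semigroup property is fundamentally different from the paper's, and the gap you flag is fatal rather than merely technical.

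The issue is not just that $\mu^{*}$ may fail to have distinct rows, so that the inductive hypothesis does not literally apply; it is that the statement you would need, namely $c(\mu^{*},\mu^{*},B)$ for $B\succeq\mu^{*}$, can be \emph{false} once repeated rows appear. Concretely, take $\mu=(7,6,3,2,1)$, so $k=5$ and $\mu^{*}=(2,2)$. A direct character computation gives $(2,2)^{\otimes 2}=(4)\oplus(2,2)\oplus(1^4)$, so $c((2,2),(2,2),(3,1))$ fails even though $(3,1)\succeq(2,2)$. Your proposed fix of ``iterating the decomposition'' cannot rescue this: the very peeling $\mu^{*}=\varrho_{k'}\Hplus(\mu^{*})^{*}$ is undefined when $\mu^{*}$ has equal rows, since for $(2,2)$ neither $(2,2)-\varrho_1=(1,2)$ nor $(2,2)-\varrho_2=(0,1)$ is a partition. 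The appeals to \cite{symcube} or to hooks and two-row shapes do not cover the residuals that arise, which are arbitrary partitions with repeated parts. Separately, the existence of a splitting $\nu=A\Hplus B$ with $A\succeq\varrho_k$ and $B\succeq\mu^{*}$ is asserted but not proved; this is a nontrivial combinatorial claim in its own right.

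The paper proceeds by a direct extension of Ikenmeyer's Young-hypergraph argument, with no use of the semigroup property or induction on size. On the Ferrers diagram of $\mu$ one takes two hyperedge layers: $H_1$ given by the columns, and $H_2$ built by greedily taking the rightmost available box in each row. One then shows that the unique labelling which is permuting for both $H_1$ and $H_2$ is the row-index labelling; the distinct-row hypothesis is exactly what makes $H_2$ have the correct hyperedge sizes and forces this uniqueness. A compatible third layer of type $\nu$ then exists precisely when $\nu\succeq\mu$, by a standard content-filling argument, and uniqueness of the perfect labelling yields $c(\mu,\mu,\nu)$.
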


This is strictly more general than Theorem~\ref{thm:dominance} because in the case when $\mu=\varrho_m$ we have $c(\mu,\mu,\nu)$ for all $\nu\succeq \mu$. Since $\mu=\mu'$, conjugating gives $c(\mu,\mu,\nu')$ as well, and so we have recovered both cases of Theorem~\ref{thm:dominance}.

\begin{proof}

The proof of this generalization requires only slight modification of the proof of the original result in \cite{IkenDom}. We quote from there the following definition. 

\begin{defn}
\label{defn:younghyper}
Let $d=|\lambda|=|\mu|=|\nu|$. A \textit{Young hypergraph} H of type $(\lambda,\mu,\nu)$, is a hypergraph with $d$ vertices such that 
\begin{enumerate}
\item The are three layers of hyperedges $E_{\lambda},E_{\mu},E_{\nu}$.
\item Each of $E_{\lambda},E_{\mu},E_{\nu}$ contains each vertex in exactly 1 hyperedge.
\item There is a bijection between the vertices of $H$ and the boxes of $\lambda$ such that 2 vertices lie in a common hyperedge of $E_{\lambda}$ iff the corresponding boxes in $\lambda$ lie in the same column. Analogously for $E_{\mu}$ and $\mu$ and for $E_{\nu}$ and $\nu$.
\end{enumerate}
\end{defn}

Given a Young hypergraph of type $(\lambda,\mu,\nu)$, we consider the ways to label its vertices with positive integers. In particular, we have the following definitions.

\begin{defn}
We call a labelling of the vertex set of a Young hypergraph of type $(\lambda,\mu,\nu)$ $\lambda$-\textit{permuting} if
for each hyperedge $e_{\lambda}$ of $\lambda$ with $|e_{\lambda}|=k$, the labels of the vertices of $e_{\lambda}$ are a permutation of $\{1,2,\dots,k\}$. We define $\mu$-permuting and $\nu$-permuting analogously.
\end{defn}

\begin{defn}
We call a labelling of the vertex set of a Young hypergraph of type $(\lambda,\mu,\nu)$ $\lambda$-\textit{distinct} if
for each hyperedge $e_{\lambda}$ of $\lambda$ with $|e_{\lambda}|=k$, the labels of the vertices of $e_{\lambda}$ consist of distinct numbers. We define $\mu$-distinct and $\nu$-distinct analogously.
\end{defn}

\begin{defn}

We call a labelling of the vertex set of a $(\lambda,\mu,\nu)$ Young hypergraph $H$ \textit{perfect} if it is $\lambda$-permuting, $\mu$-permuting, and $\nu$-distinct.
\end{defn}

Then the finish of the proof of \cite{IkenDom}[2.1] (see Section 5) amounts to the following lemma.

\begin{lem}
\label{lem:young}
If there exists a Young hypergraph such that there is exactly 1 perfect labelling of its vertices then $c(\lambda,\mu,\nu)$ holds.
\end{lem}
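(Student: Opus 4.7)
The plan is to translate the combinatorial hypothesis about perfect labelings of $H$ into a positivity statement about a certain intertwiner space, using classical Young's rule on column-induced permutation and sign modules.

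For each partition $\pi$, let $M^{\pi} := \operatorname{Ind}_{Y_{\pi'}}^{S_n}(\mathbf{1})$ denote the module induced from the trivial character on the column-Young subgroup $Y_{\pi'} = S_{\pi'_1} \times S_{\pi'_2} \times \cdots$, and let $N^{\pi} := \operatorname{Ind}_{Y_{\pi'}}^{S_n}(\operatorname{sgn})$ be the analogous sign-induced module. By Young's rule, $M^{\pi}$ contains $S^{\pi}$ with multiplicity one and decomposes as $S^{\pi} \oplus \bigoplus_{\sigma \succ \pi} a^{\pi}_{\sigma} S^{\sigma}$ with non-negative integers $a^{\pi}_{\sigma}$, and dually $N^{\pi}$ decomposes as $S^{\pi} \oplus \bigoplus_{\sigma \prec \pi} b^{\pi}_{\sigma} S^{\sigma}$. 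The plan is to consider the intertwiner space $\Phi := \operatorname{Hom}_{S_n}(N^{\nu}, M^{\lambda} \otimes M^{\mu})$; expanding both sides in Specht modules and using symmetry of the Kronecker coefficients yields
\[
\dim \Phi \;=\; g_{\lambda \mu \nu} \;+\; \sum_{(\alpha,\beta,\gamma) \neq (\lambda,\mu,\nu)} a^{\lambda}_{\alpha}\, a^{\mu}_{\beta}\, b^{\nu}_{\gamma}\, g_{\alpha \beta \gamma}.
\]
All terms on the right are non-negative integers, and the principal term is exactly the quantity we wish to show is positive.

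The heart of the argument will be a combinatorial interpretation of $\dim \Phi$ via Frobenius reciprocity. An element of $\Phi$ is determined by the image of a generator of $N^{\nu}$, and the requirement that this image be $(Y_{\nu'}, \operatorname{sgn})$-equivariant inside $M^{\lambda} \otimes M^{\mu}$ translates, after fixing column-tabloids $\mathcal{A}, \mathcal{B}, \mathcal{C}$ realizing the Young hypergraph $H$, to the following data: labelings of the vertices that are $\lambda$- and $\mu$-permuting (so the labels record column positions within $\lambda$- and $\mu$-columns, matching the trivial character on $Y_{\lambda'}, Y_{\mu'}$) and $\nu$-distinct (precisely the condition that prevents the column antisymmetrizer for $\nu$ from annihilating the tensor). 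The piece of $\dim \Phi$ contributed by this specific hypergraph class is then the count of perfect labelings of $H$; in particular, if $H$ has exactly one perfect labeling, this piece equals $1$, so $\dim \Phi \geq 1$.

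To finish, I would argue that this contribution falls into the principal $(\lambda, \mu, \nu)$-summand of the decomposition above, not into a strictly higher-dominance summand. Intuitively, the hypergraph $H$ records exactly the column structure of $\lambda, \mu, \nu$, so its labelings can only witness the $(\lambda, \mu, \nu)$-Specht constituent and not a coarsening $(\alpha, \beta, \gamma)$ with $\alpha \succ \lambda$ (which would require merging $\lambda$-columns into larger blocks, corresponding to a different hypergraph). This forces $g_{\lambda \mu \nu} \geq 1$. The main obstacle is this last step: cleanly separating the $(\lambda, \mu, \nu)$-contribution within $\dim \Phi$ and ruling out cancellation or ``overcounting'' from higher-dominance terms. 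An alternative and perhaps more direct route would be to bypass the intertwiner formalism and instead construct a non-zero $S_n$-invariant in $S^{\lambda} \otimes S^{\mu} \otimes S^{\nu}$ explicitly, as a signed sum over $S_n$-translates of a triple of polytabloids built from the unique perfect labeling, with the uniqueness hypothesis ensuring that no two translates contribute cancelling terms.
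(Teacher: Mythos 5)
Your central step is missing, and it is exactly the content of the lemma. From the decomposition $\dim \Phi = g_{\lambda\mu}^{\nu} + \sum a^{\lambda}_{\alpha} a^{\mu}_{\beta} b^{\nu}_{\gamma}\, g_{\alpha\beta}^{\gamma}$ (granting it for the moment), showing $\dim \Phi \geq 1$ proves nothing about the principal term, since the remaining summands are generically positive; indeed $\operatorname{Hom}_{S_n}(N^{\nu}, M^{\lambda}\otimes M^{\mu})$ is almost never zero. You acknowledge this (``the main obstacle''), but the lemma \emph{is} that obstacle: the whole point of the unique-perfect-labelling hypothesis is to certify positivity of the single coefficient $g_{\lambda\mu}^{\nu}$, not of an inhomogeneous sum containing it. There are also two subsidiary problems. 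First, your setup is misstated: $\operatorname{Ind}_{Y_{\pi'}}^{S_n}(\mathbf{1})$ is the permutation module on tabloids of shape $\pi'$, whose unique minimal constituent is $S^{\pi'}$, not $S^{\pi}$ (the multiplicity of $S^{\pi}$ is a Kostka number that can even vanish); only the sign-induced module $N^{\pi}$ behaves as you claim. Second, the identification of ``the piece of $\dim\Phi$ contributed by this hypergraph'' with the number of perfect labellings is not justified: by Frobenius reciprocity/Mackey theory, that Hom space is counted by orbits of the $\nu$-column group on pairs of cosets whose stabilizers have trivial sign character, which is not the same count as labellings that are $\lambda$- and $\mu$-permuting and $\nu$-distinct.

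For comparison, the paper does not prove this lemma internally either; it is quoted as encapsulating the finish of the proof of Theorem 2.1 in \cite{IkenDom} (Section 5 there). Ikenmeyer's argument is of the kind you gesture at in your final sentences: one constructs an explicit vector (a highest weight vector, resp.\ an invariant in the triple tensor product) from the hypergraph data and evaluates it, and the evaluation is a signed sum indexed exactly by perfect labellings; when there is exactly one such labelling the sum is $\pm 1$, hence nonzero, which certifies $c(\lambda,\mu,\nu)$ directly with no need to isolate a summand of a Hom space. So your alternative sketch is the right idea, but as written it specifies neither the pairing, nor the signs, nor why distinct translates cannot cancel, and so it does not yet constitute a proof.
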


The proof in \cite{IkenDom} uses as Young hypergraphs for $\varrho_m$ the rows and columns of the Ferrers diagram of $\varrho_m$, which works because $\varrho_m$ is symmetric. We now adapt this to an arbitrary $\mu$ with distinct row lengths; for any $\nu\succeq \mu$ we show there exists a perfect Young hypergraph of type $(\mu,\mu,\nu)$. We use the Ferrers diagram for $\mu$ as the vertex set. For the first hypergraph $H_1$ corresponding to $\mu$, we simply take the columns of $\mu$ as the hyperedges. Note that in English coordinates this amounts to, for each hyperedge, greedily taking the left-most vertex in each available row. For our second hypergraph $H_2$ corresponding to $\mu$ we greedily take vertices from the right of each row instead. For example, when $\mu=\varrho_m$ this amounts to taking the diagonals as hyperedges. It is easy to see that $H_2$ is also a hypergraph of type $\mu$. We claim that $H_1$ and $H_2$ already limit the possible vertex labellings to a unique one, namely the labelling which assigns to each vertex its row number. 

\begin{lem}
There is only one vertex labelling of the Ferrers diagram of $\mu$ which is $\mu$-permuting with respect to both $H_1$ and $H_2$. This labelling assigns to each vertex its row number.
\end{lem}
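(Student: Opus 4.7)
The plan is to prove existence (the row-number labelling satisfies both $\mu$-permuting conditions) by direct inspection, then deduce uniqueness by an induction on the label value $i$ that plays the $H_1$ and $H_2$ permutation conditions against each other via a summation identity. The distinct-row-lengths hypothesis will enter only at the final step, as the source of a needed strict inequality.

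For existence, I would write $m_j = |\{i : \mu_i \geq j\}|$ for the length of the $j$-th column of $\mu$; since $\mu_1 > \mu_2 > \dots > \mu_k$, the rows contributing to column $j$ are exactly $\{1, 2, \dots, m_j\}$, and likewise the rows with $\mu_i \geq t$ are $\{1, \dots, m_t\}$. Under the row-number labelling, the labels appearing in the $j$-th hyperedge of $H_1$ are thus $\{1, \dots, m_j\}$, and the labels in the $t$-th hyperedge $E^{(2)}_t = \{(i, \mu_i - t + 1) : \mu_i \geq t\}$ of $H_2$ are $\{1, \dots, m_t\}$, verifying the required permutation property in both.

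For uniqueness I would induct on $i$ with the hypothesis that $L^{-1}(i') = $ (row $i'$) for all $i' < i$, so that $L^{-1}(i)$ lies in rows $\geq i$. Counting occurrences of the label $i$: it appears once in each $H_1$-column of size $\geq i$ (namely columns $1, \dots, \mu_i$) and once in each $H_2$-hyperedge of size $\geq i$ (namely $E^{(2)}_1, \dots, E^{(2)}_{\mu_i}$), giving $|L^{-1}(i)| = \mu_i$ together with two bijections $L^{-1}(i) \to \{1, \dots, \mu_i\}$: the column projection $(r, c) \mapsto c$, and $(r, c) \mapsto \mu_r - c + 1$, using that $(r, c)$ lies in $E^{(2)}_{\mu_r - c + 1}$.

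Summing both bijections yields
\[
\sum_{(r,c) \in L^{-1}(i)} c \;=\; \sum_{(r,c) \in L^{-1}(i)} (\mu_r - c + 1) \;=\; \binom{\mu_i + 1}{2},
\]
and adding these and simplifying gives $\sum \mu_r = \mu_i^2 = \mu_i \cdot |L^{-1}(i)|$, so the average of $\mu_r$ over $L^{-1}(i)$ equals $\mu_i$. But $r \geq i$ together with strictly decreasing row lengths forces $\mu_r \leq \mu_i$ with equality iff $r = i$; hence every box of $L^{-1}(i)$ has $r = i$, giving $L^{-1}(i) \subseteq$ (row $i$), and equal cardinalities imply equality. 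The main obstacle is identifying this averaging argument: direct propagation starting from size-$1$ hyperedges handles boundary boxes of each row but leaves ``interior'' boxes ambiguous in general, whereas pairing the two permutation conditions through the double sum extracts information from $H_1$ and $H_2$ simultaneously in one clean stroke.
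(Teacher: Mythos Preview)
Your proof is correct, but it takes a genuinely different route from the paper's.

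The paper argues by a double induction on the Ferrers diagram itself: first on columns from right to left, then within each column from the bottom box upward. The key geometric observation is that for any box $v$ in row $i$, the $H_2$-hyperedge through $v$ contains one box in each of rows $1,\dots,i-1$, all strictly to the \emph{right} of $v$ (this is where distinctness of row lengths enters). Hence, once all columns to the right of $v$ are known to carry the row-number labelling, the $H_2$-condition forces the label of $v$ to be at least $i$; meanwhile the $H_1$-condition (the column of $v$, already determined below $v$) forces it to be at most $i$. This pins down every box directly, with no counting.

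Your approach instead inducts on the label value and extracts the conclusion from a global averaging identity $\sum_{(r,c)\in L^{-1}(i)}\mu_r=\mu_i^2$. This is a nice trick and entirely valid. The trade-off: the paper's argument is more elementary and local---each box is determined by a simple squeeze---whereas yours packages both hypergraph conditions into a single summation. Your closing remark that ``direct propagation \dots\ leaves interior boxes ambiguous'' is not quite right: the paper's strictly-to-the-right observation is exactly what resolves the interior boxes, and you may find it worth revisiting that line of attack to see why it goes through.
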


\begin{proof}
We first make an easy observation. For any vertex $v$ of our hypergraph, the hyperedge of $H_2$ containing $v$ also contains a vertex in each higher row, all of which are strictly to the right of $v$. This is clear because the rows of $\mu$ have distinct lengths.

Now we inductively show that any such labelling must just be the row numbering. We induct on columns, starting from the furthest right. Clearly the rightmost column's unique vertex is labelled 1, because it is contained in a size 1 hyperedge of $H_1$.

For each subsequent column, assume that all columns to the right are labelled by row numbers. We do a further induction within the column, starting from the bottom vertex. Call the column under consideration $C$, and say its vertices are $v_1$ in row 1, and so on to $v_k$ in row k. For some vertex $v_i$ in $C$, assume that $v_j$ is labelled $j$ for all $j>i$; we show $v_i$ is labelled as $i$. 

To do so, note that by our initial observation, $v_i$ must be labelled at least $i$; its hyperedge in $H_2$ contains labels from $1$ to $(i-1)$ already. However, its hyperedge in $H_1$ is simply $C$, which already contains all labels greater than $i$. Therefore the only choice for $v_i$ is to be labelled $i$. 

This completes the induction. Clearly the described labelling indeed satisfies the given conditions, so the lemma is proved.
\end{proof}

Now the combination of the above lemmas implies that if we can find a perfect $(\mu,\mu,\nu)$ Young hypergraph for the above vertex labelling extending $H_1, H_2$ above, then we have $c(\mu,\mu,\nu)$. In fact, we can do so for precisely those $\nu$ which dominate $\mu$. It is clear that to find a $E_{\nu}$ hypergraph which yields a perfect Young hypergraph, it is equivalent to find a filling of $\nu$ with content $\mu$ such that each column has distinct entries. (By such a filling of $\nu$ with content $\mu$ we mean a labelling of the boxes of $\nu$ such that the number of $k$ labels is the size $\mu_k$ of the $k$th row of $\mu$.) By \cite{IkenDom}[4.1], the existence of such a filling is equivalent to $\nu \succeq \mu$, so the proof of Theorem~\ref{thm:gendom} is complete.
\end{proof}

\section{Appendix: The Generalized Semigroup Property and Tensor Cubes}

\subsection{The Semigroup Property for Many Partitions}
\label{subsec:longsemigroup}

We first generalize $c(\cdot,\cdot,\cdot)$ to longer sequences of representations.

\begin{defn}

For $k$ a positive integer let $$c(\lambda_1,\lambda_2,\dots,\lambda_k)$$
denote the assertion that $\lambda_1$ is a constituent of $\lambda_2\otimes \lambda_3\otimes \dots \lambda_k.$
\end{defn}

As in the $k=3$ case, $c$ is symmetric because it simply asserts the positivity of  $$\frac{1}{n!}\sum_{\sigma \in S_n} {\chi^{\lambda_1}(\sigma)}{\chi^{\lambda_2}(\sigma)}\dots {\chi^{\lambda_k}(\sigma)}.$$ We now show that the semigroup property still applies for longer sequences using induction.

\begin{lem}

\label{thm:longhsum}
If $c(\lambda_1, \dots \lambda_k)$ and $c(\mu_1,\dots \mu_k)$ then also $c(\lambda_1\Hplus\mu_1, \dots, \lambda_k \Hplus \mu_k).$ 
\end{lem}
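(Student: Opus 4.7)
The plan is to induct on $k$, with the base case $k=3$ being Theorem~\ref{thm:hsum}; the degenerate case $k=2$ is trivial since $c(\lambda_1,\lambda_2)$ reduces to the statement $\lambda_1=\lambda_2$. For the inductive step I will assume the result for sequences of length $k-1$ and suppose $c(\lambda_1,\ldots,\lambda_k)$ and $c(\mu_1,\ldots,\mu_k)$.

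The crux is to peel off one factor from each side of the tensor product via an intermediate irreducible. Decomposing $\lambda_3\otimes\cdots\otimes\lambda_k=\bigoplus_\alpha n_\alpha \alpha$ into irreducibles yields $\lambda_2\otimes(\lambda_3\otimes\cdots\otimes\lambda_k)=\bigoplus_\alpha n_\alpha(\lambda_2\otimes\alpha)$, so the assumption that $\lambda_1$ appears in this tensor product forces the existence of some irreducible $\nu$ with $n_\nu>0$ and $\lambda_1$ appearing in $\lambda_2\otimes\nu$. Equivalently, $c(\nu,\lambda_3,\ldots,\lambda_k)$ and $c(\lambda_1,\lambda_2,\nu)$. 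Analogously, I produce $\xi$ satisfying $c(\xi,\mu_3,\ldots,\mu_k)$ and $c(\mu_1,\mu_2,\xi)$.

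With these intermediates in hand, applying the inductive hypothesis to the $(k-1)$-term statements $c(\nu,\lambda_3,\ldots,\lambda_k)$ and $c(\xi,\mu_3,\ldots,\mu_k)$ gives
\[c(\nu\Hplus\xi,\,\lambda_3\Hplus\mu_3,\,\ldots,\,\lambda_k\Hplus\mu_k),\]
while applying Theorem~\ref{thm:hsum} to the triples $(\lambda_1,\lambda_2,\nu)$ and $(\mu_1,\mu_2,\xi)$ gives
\[c(\lambda_1\Hplus\mu_1,\,\lambda_2\Hplus\mu_2,\,\nu\Hplus\xi).\]
Together with associativity of the tensor product, these imply that $\lambda_1\Hplus\mu_1$ is a constituent of $(\lambda_2\Hplus\mu_2)\otimes(\lambda_3\Hplus\mu_3)\otimes\cdots\otimes(\lambda_k\Hplus\mu_k)$, which is the desired conclusion. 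No step involves real new content once Theorem~\ref{thm:hsum} is granted; the only mild obstacle is the bookkeeping required to extract $\nu$ and $\xi$, which is guaranteed by the direct-sum decomposition above.
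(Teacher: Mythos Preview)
Your proof is correct and follows essentially the same approach as the paper's: induct on $k$, peel off a pair of factors via an intermediate irreducible, apply the inductive hypothesis to the shorter tuple and Theorem~\ref{thm:hsum} to the resulting triple, then recombine. The only cosmetic difference is that the paper peels off $\lambda_{k-1},\lambda_k$ (producing an intermediate $\alpha$ with $c(\lambda_1,\ldots,\lambda_{k-2},\alpha)$ and $c(\lambda_{k-1},\lambda_k,\alpha)$) while you peel off $\lambda_1,\lambda_2$; by the symmetry of $c$ these are equivalent.
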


\begin{proof}

First, the result is trivially true for $k\leq 2$ since $c(\lambda_1)=1 \iff \lambda_1$ is trivial and $c(\lambda_1,\lambda_2) \iff \lambda_1=\lambda_2$. For $k\geq3$ we induct from base case of \ref{thm:hsum}. So take $k>3$ and assume the result for $k-1$.\\

Because $c(\lambda_1, \dots \lambda_k)$ there exists a partition $\alpha$ such that $c(\lambda_1,\lambda_2,\dots \lambda_{(k-2)},\alpha)$ and $c(\lambda_{(k-1)},\lambda_k,\alpha)$. Similarly there exists $\beta$ such that $c(\mu_1,\dots, \mu_{(k-2)},\beta)$ and $c(\mu_{(k-1)},\mu_k,\beta)$. We now conclude using the inductive hypothesis that 

$$c(\lambda_1\Hplus\mu_1, \dots, \lambda_{(k-2)} \Hplus \mu_{(k-2)},\alpha\Hplus\beta)$$
and 
$$c(\lambda_{(k-1)} \Hplus \mu_{(k-1)},\lambda_k \Hplus \mu_k,\alpha\Hplus\beta).$$

Together these imply $c(\lambda_1\Hplus\mu_1, \dots, \lambda_k \Hplus \mu_k)$ as desired.

\end{proof}

As in the $k=3$ case, we may vertically add any even number of partitions in applying the semigroup property: this is because conjugating an even number of the partitions does not change the truth of $c(\lambda_1,\dots \lambda_k)$.

\subsection{Rectangles Appear in $\varrho_m^{\otimes 3}$}
\label{subsec:rectcube}

As mentioned in the remarks, rectangles are difficult to control using the semigroup property because they can only be broken up into smaller rectangles. This suggests that rectangles should be the hardest case for the Saxl conjecture. In this section, we show that despite this, rectangles appear in the tensor cube of the staircase. In fact, the proof is a fairly simple induction.

\begin{defn}
The rectangle partition $R(a,b)$ is the rectangular Young diagram $(a,a,\dots a)$ with $b$ rows.
\end{defn}

\begin{thm}
\label{thm:rectcube}
Any rectangular partition $\lambda=R(a,b)$ of size $\binom{m+1}{2}$ is a constituent in the tensor cube $\varrho_m^{\otimes 3}$. Equivalently, if $ab=\binom{m+1}{2}$ then $c(\varrho_m,\varrho_m,\varrho_m,R(a,b))$.
\end{thm}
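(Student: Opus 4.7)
The plan is to prove Theorem~\ref{thm:rectcube} by induction on $m$, with base case $m=1$ immediate since the unique rectangle $R(1,1)=\varrho_1$ lies in $\varrho_1^{\otimes 3}=(1)$. For the inductive step, fix a rectangle $R(a,b)\vdash\binom{m+1}{2}$. Lemma~\ref{lem:conjalt} together with $\varrho_m'=\varrho_m$ gives $\varrho_m^{\otimes 3}\otimes 1^n=\varrho_m^{\otimes 3}$, so the multiplicity of $R(a,b)$ in $\varrho_m^{\otimes 3}$ equals that of $R(a,b)'=R(b,a)$; we may therefore assume $a\geq b$.

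First observe that $\varrho_m\in\varrho_m^{\otimes 2}$: since $\varrho_m\succeq\varrho_m$ trivially, Theorem~\ref{thm:dominance} yields $c(\varrho_m,\varrho_m,\varrho_m)>0$. Consequently every constituent of $\varrho_m^{\otimes 2}$ is automatically a constituent of $\varrho_m^{\otimes 3}=\varrho_m^{\otimes 2}\otimes\varrho_m$. Whenever $R(a,b)$ is dominance-comparable to $\varrho_m$---a straightforward partial-sum calculation shows this happens exactly when $a\geq m$ (giving $R\succeq\varrho_m$) or when $2a+b\leq 2m+1$ (giving $R\preceq\varrho_m$)---Theorem~\ref{thm:dominance} places $R(a,b)$ in $\varrho_m^{\otimes 2}\subseteq\varrho_m^{\otimes 3}$ and the case is closed.

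The remaining, genuinely hard case consists of ``square-like'' rectangles incomparable with $\varrho_m$, namely those with $b<a<m$ and $2a+b>2m+1$. For these we invoke the four-ary generalized semigroup property of Lemma~\ref{thm:longhsum}, together with its vertical-sum variant (obtained by conjugating an even number of the four positions). Decompose $\varrho_m$ via the $k=2$ case of Proposition~\ref{thm:stairgrid}, writing $\varrho_m=(\varrho_{\lceil m/2\rceil}\Hplus\varrho_{\lfloor m/2\rfloor})\Vplus(\varrho_{\lceil m/2\rceil}\Hplus\varrho_{\lceil m/2\rceil})$ in the even-$m$ case and analogously for odd $m$, and split $R(a,b)$ in parallel using $R(a,b)=R(a_1,b)\Hplus R(a_2,b)$ and $R(a,b)=R(a,b_1)\Vplus R(a,b_2)$ into four rectangular pieces whose sizes match those of the four smaller staircases. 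By the inductive hypothesis, each piece lies in the tensor cube of the corresponding smaller staircase, and the semigroup property combines these assertions to produce $R(a,b)$ as a constituent of $\varrho_m^{\otimes 3}$.

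The principal obstacle is size matching: by Remark~\ref{rem:recthard}, any horizontal or vertical splitting of a rectangle yields only rectangles, so the existence of a rectangular decomposition of $R(a,b)$ compatible with a prescribed staircase decomposition of $\varrho_m$ depends delicately on the divisibility structure of $a$ and $b$. When a direct four-piece split is obstructed, one passes to a finer $(k,i)$-layer decomposition of $\varrho_m$ via Proposition~\ref{thm:layergrid}, which provides many more smaller staircase pieces and hence much more flexibility in matching sizes. Additional flexibility comes from the observation that the inductive hypothesis combined with Theorem~\ref{thm:dominance} allows the smaller pieces to be any partitions dominance-comparable to their target smaller staircases---not merely rectangles---so that the overall horizontal and vertical combination need only recover $R(a,b)$, not refine it into a rectangular grid. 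These tools together close the induction.
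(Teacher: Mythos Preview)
Your proposal correctly identifies the inductive framework and the easy dominance-comparable case, but it has a genuine gap at the crucial step. You acknowledge that size-matching is ``the principal obstacle'' and then assert that finer layer decompositions together with dominance-comparable (non-rectangular) pieces ``close the induction,'' without actually exhibiting any decomposition that works. The difficulty is real: if you split $\varrho_m$ into staircases only, each piece has size $\binom{s+1}{2}$ for some $s$, and matching these sizes with sub-rectangles of $R(a,b)$ (whose sizes are multiples of $a$ or $b$) imposes divisibility constraints that need not hold. Allowing non-rectangular pieces dominance-comparable to the small staircases does not obviously help, because for a square-like $R(a,b)$ you cannot guarantee that such pieces exist with exactly the right sizes while still summing back to a rectangle.

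The paper sidesteps this entirely with a different decomposition of $\varrho_m$. Rather than breaking $\varrho_m$ into smaller staircases alone, it uses the identity
\[
\bigl(R(x,y)\Hplus\varrho_y\bigr)\Vplus\varrho_x=\varrho_{x+y},
\]
which expresses $\varrho_m$ as a rectangle plus two smaller staircases. One then splits $R(a,b)$ in the same pattern---as $(R_1\Hplus R_2)\Vplus R_3$ with all $R_i$ rectangular---choosing $x,y$ in terms of $a,m$ so that the sizes match exactly (no divisibility issues arise). The two staircase pieces are handled by induction, and the rectangle piece is handled by the trivial observation
\[
c(\mu,\mu,\mu,\mu)\quad\text{for every }\mu,\qquad\text{since }\langle\mu^{\otimes 2},\mu^{\otimes 2}\rangle>0.
\]
This last fact is the missing idea in your attempt: once one of the summands of $\varrho_m$ is itself a rectangle, the four-ary semigroup property needs no inductive hypothesis or dominance comparison for that piece.
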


\begin{proof}

We induct on $m$. Assume WLOG that $a\geq b$. If $a\geq m$ then $b \leq \frac{m+1}{2}$ and by Lemma~\ref{lem:domheights}, $\lambda$ and $\varrho_m$ are dominance comparable. Thus, the result follows in this case. 

Thus, we may assume that $\frac{m+1}{2}<a<m$. Then we have that $(2a-m)$ and $ (2m-a-1)$ are positive. We prove some simple lemmas.

\begin{lem}
\label{lem:stupid}
$b+2a-2m-1\geq 0$ and $a(b+2a-m-1)=|\varrho_{2a-m-1}|$.
\end{lem}

\begin{proof}
$0\leq \frac{(2a-m)(2a-m-1)}{2}=\frac{m^2+m}{2}+2a^2-2am-a=ab+2a^2-2am-a=a(b+2a-2m-1)$. Because $a\geq 0$, $b+2a-2m-1\geq 0$ follows.
\end{proof}

\begin{lem} If $\mu=R(2a-m,2m-a-1)$ then
$$c(\mu,\mu,\mu,\mu).$$
\end{lem}

\begin{proof}
In fact this holds for any $\mu$ at all.
We have $$k(\mu,\mu,\mu,\mu)=\langle \mu^{\otimes 2},\mu^{\otimes 2}\rangle>0$$
\end{proof}

\begin{lem}
$$c(\varrho_{(2m-2a)},\varrho_{(2m-2a)},\varrho_{(2m-2a)},R(m-a,2m-2a+1)).$$

\end{lem}

\begin{proof}
This follows from the inductive hypothesis because the total number of blocks in the rectangle and staircases are clearly equal.
\end{proof}

\begin{lem}
$$c\left(\varrho_{(2a-m-1)}, \varrho_{(2a-m-1)}, \varrho_{(2a-m-1)}, R(a,b+2a-2m-1)\right).$$

\end{lem}

\begin{proof}
This follows from the inductive hypothesis and Lemma~\ref{lem:stupid}.
\end{proof}

Now note that $$(R(2a-m,2m-2a+1)\Hplus R(m-a,2m-2a+1))\Vplus R(a,b+2a-2m+1)=$$
$$R(a,2m-2a-1)\Vplus R(a,b+2a-2m-1)=R(a,b)$$

while $$(R(2a-m,2m-2a+1)\Hplus \varrho_{(2m-a)})\Vplus \varrho_{(2a-m-1)}=\varrho_m.$$

This latter identity is a rewriting of the geometrically obvious $$(R(x,y)\Hplus \varrho_y )\Vplus \varrho_x =\varrho_{(x+y)}.$$ Theorem~\ref{thm:longhsum} applied to these lemmas and identities gives the result; because 4 is even, it is permissible to vertically add all 4 partitions when using the semigroup property. Geometrically, we are simply combining shapes as below.

\begin{figure}[h]
\label{fig:RectCube}
\caption{A Geometric Proof that Rectangles are Contained in $\varrho_m^{\otimes 3}$}
\includegraphics[scale=0.8]{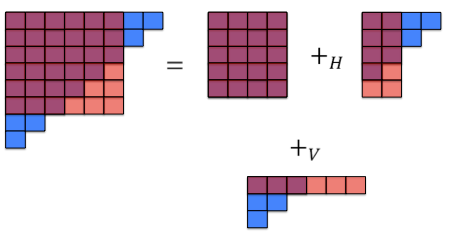}  
\end{figure}

\end{proof}

\bibliographystyle{acm}

\bibliography{bibliography}

\end{document}